\def\RSsubtxt{section~}\newref{sub}{name = \RSsubtxt}}
\def\RSthmtxt{theorem~}\newref{thm}{name = \RSthmtxt}}
\def\RSlemtxt{lemma~}\newref{lem}{name = \RSlemtxt}}
\numberwithin{equation}{section}
\numberwithin{figure}{section}
  \theoremstyle{plain}
  \newtheorem*{thm*}{\protect\theoremname}
\theoremstyle{plain}
\newtheorem{thm}{\protect\theoremname}[section]
  \theoremstyle{plain}
  \newtheorem{prop}[thm]{\protect\propositionname}
  \theoremstyle{definition}
  \newtheorem{defn}[thm]{\protect\definitionname}
  \theoremstyle{plain}
  \newtheorem{lem}[thm]{\protect\lemmaname}
  \theoremstyle{plain}
  \newtheorem{cor}[thm]{\protect\corollaryname}
  \theoremstyle{remark}
  \newtheorem{rem}[thm]{\protect\remarkname}
  \theoremstyle{claim}
  \newtheorem{claim}[thm]{\protect\claimname}
\theoremstyle{note}
  \newtheorem{note}[thm]{\protect\notename}
\theoremstyle{acknowledgment}
  \newtheorem{acknowledgment}[thm]{\protect\acknowledgmentname}
  \providecommand{\corollaryname}{Corollary}
  \providecommand{\definitionname}{Definition}
  \providecommand{\lemmaname}{Lemma}
  \providecommand{\claimname}{Claim}
  \providecommand{\propositionname}{Proposition}
  \providecommand{\remarkname}{Remark}
  \providecommand{\theoremname}{Theorem}
\providecommand{\theoremname}{Theorem}
\providecommand{\notename}{Note}
\providecommand{\acknowledgmentname}{Acknowledgment}
\begin{document}

\title{verbally prime T-ideals and graded division algebras }

\author{Eli Aljadeff \and Yaakov karasik}

\address{Department of Mathematics, Technion - Israel Institute of Technology,
Haifa 32000, Israel}

\email{aljadeff 'at' tx.technion.ac.il (E. Aljadeff),}

\email{yaakov 'at' tx.technion.ac.il (Y. Karasik).}

\keywords{graded algebras, polynomial identities, verbally prime, graded division
algebras.}
\begin{abstract}

Let $F$ be an algebraically closed field of characteristic zero and let $G$ be a finite group. We consider graded Verbally prime $T$-ideals in the free $G$-graded algebra. It turns out that equivalent definitions in the ordinary case (i.e. ungraded) extend to nonequivalent definitions in the graded case, namely verbally prime $G$-graded $T$-ideals and strongly verbally prime $T$-ideals. At first, following Kemer's ideas, we classify $G$-graded verbally prime $T$-ideals. The main bulk of the paper is devoted to the stronger notion. We classify $G$-graded strongly verbally prime $T$-ideals which are $T$-ideal of affine $G$-graded algebras or equivalently $G$-graded $T$-ideals that contain a Capelli polynomial. It turns out that these are precisely the $T$-ideal of $G$-graded identities of finite dimensional $G$-graded, central over $F$ (i.e. $Z(A)_{e}=F$) which admit a $G$-graded division algebra twisted form over a field $k$ which contains $F$ or equivalently over a field $k$ which contains enough roots of unity (e.g. a primitive $n$-root of unity where $n = ord(G)$).

\end{abstract}

\maketitle

\section{introduction}

Let $F$ be an algebraically closed field of characteristic zero. The study of PI algebras over $F$ is closely related to the study of $T$-ideals
of the noncommutative free associative algebra $F\left\langle X\right\rangle$ where $X$ is a countable set of variables.
Since any $T$-ideal $\Gamma$ appears as the ideal of polynomial identities of a suitable algebra $A$, it is natural to consider special types of $T$-ideals and try to understand
what algebras have them as their ideal of identities. In the ungraded
case two types of \textit{primeness} on $T$-ideals are considered, namely \textit{prime} $T$-ideals and \textit{verbally prime} $T$-ideals. By definition a $T$-ideal $I$ is prime if for any two ideals $J_{1}, J_{2}$ in $F\langle X \rangle$ such that $J_{1}J_{2} \subseteq I$, either $J_{1} \subseteq I$ or $J_{2} \subseteq I$ whereas $I$ is verbally prime if a similar condition holds whenever the ideals $J_{1}$ and $J_{2}$ are $T$-ideals.

We adopt the definition from $T$-ideals and say that an algebra $A$ is \textit{verbally prime} if its $T$-ideal of identities $Id(A)$ is verbally prime. As for primeness, we recall that an algebra $A$ is prime if for ideals $I$ and $J$ in $A$ with $IJ=0$ we must have $I=0$ or $J=0$, hence we cannot just define primeness in terms of the corresponding $T$-ideal of identities. Nevertheless, by structure theorems of Amitsur (\cite{Amitsur}) and Posner (\cite{GiaZai}, section 1.11) we have

 \begin{thm} \label{basic prime theorem ungraded}
Let A be a PI algebra. Then it is PI equivalent to a prime algebra B if and only if its T-ideal of polynomial identities Id(A) is a prime T-ideal. Furthermore, $A$ is a prime PI algebra if and only if it is PI equivalent to $M_{n}(F)$ for some $n > 0$.
\end{thm}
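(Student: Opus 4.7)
The plan is to reduce the theorem to two inputs: the Amitsur--Posner structure theorem for prime PI rings, and a formal correspondence between ideals of the free algebra containing $Id(A)$ and ideals of the relatively free algebra $U_{A} := F\langle X\rangle/Id(A)$.

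I would begin with the ``Furthermore'' clause, which then feeds back into the main equivalence. If $B$ is a prime PI algebra, Posner's theorem produces a central localization $Q(B)$ which is central simple over its center $k$, of $k$-dimension $n^{2}$, and $Q(B)\otimes_{k}\bar{k}\cong M_{n}(\bar{k})$ after splitting. The three algebras $B$, $Q(B)$, and $M_{n}(\bar{k})$ should satisfy the same polynomial identities over $F$: the containment $B\hookrightarrow Q(B)$ together with the fact that elements of $Q(B)$ have the form $bz^{-1}$ for $z$ central handles $B$ versus $Q(B)$ via multilinearization (centrality lets the scalars $s_{i}^{-1}$ be pulled out of any multilinear monomial), while the classical fact that $Id_{F}(M_{n}(K))$ is independent of the infinite extension $K\supseteq F$ handles the passage to $M_{n}(F)$. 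This yields $Id(B)=Id(M_{n}(F))$.

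For the first equivalence, $(\Leftarrow)$ is formal: if $Id(A)$ is a prime T-ideal, ideals of $U_{A}$ correspond bijectively to ideals of $F\langle X\rangle$ containing $Id(A)$, so any two ideals of $U_{A}$ with zero product pull back to $J_{1},J_{2}$ with $J_{1}J_{2}\subseteq Id(A)$; primeness of the T-ideal forces one of them into $Id(A)$ and hence one image to vanish. Thus $U_{A}$ is a prime algebra, and since $Id(U_{A})=Id(A)$ it realises the PI equivalence. For $(\Rightarrow)$, if $A$ is PI equivalent to a prime $B$, the previous paragraph gives $Id(A)=Id(M_{n}(F))$ for some $n$, so it suffices to verify that $Id(M_{n}(F))$ itself is a prime T-ideal: the generic matrix algebra $R_{n}=F\langle X\rangle/Id(M_{n}(F))$ embeds into $M_{n}$ over a commutative polynomial ring via matrices of independent indeterminates and is therefore a prime ring, and lifting any offending ideals back to $F\langle X\rangle$ translates this into primeness of the T-ideal.

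The main obstacle is not the T-ideal manipulations, which are entirely formal, but the structural content of Posner's theorem itself, namely the existence of central polynomials and the control of the classical ring of quotients of a prime PI ring. Once Posner is accepted as a black box, the plan above requires only routine bookkeeping together with the standard observation that identities of matrix algebras over infinite fields depend only on the size $n$.
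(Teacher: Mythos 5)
The paper does not actually prove this theorem --- it cites it to Amitsur and to the treatment of Posner's theorem in Giambruno--Zaicev (section 1.11), so there is no ``paper's own proof'' to compare against; what you have written is a plausible reconstruction from those sources. Your overall plan is sound: establish $Id(B)=Id(M_n(F))$ for a prime PI $B$ via Posner, then use the relatively free algebra $U_A$ for $(\Leftarrow)$ and reduce $(\Rightarrow)$ to primeness of $Id(M_n(F))$ via the generic matrix algebra. The multilinearization arguments you invoke (to pass from $B$ to $Q(B)$, from $Q(B)$ to $Q(B)\otimes_k\bar k$, and from $M_n(\bar k)$ to $M_n(F)$) are correct in characteristic $0$, and the ideal correspondence for $U_A$ and $R_n$ is handled correctly, including the observation that one may replace $J_i$ by $J_i+Id(A)$ so that the correspondence with ideals of the quotient applies.

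There is one genuine gap: you justify primeness of the generic matrix algebra $R_n$ by saying it ``embeds into $M_n$ over a commutative polynomial ring \dots and is therefore a prime ring.'' That inference is invalid --- subrings of prime rings need not be prime (e.g.\ $F\times F\hookrightarrow M_2(F)$). The correct route is to observe that $R_n$ is a prime PI ring because (by Posner again, or directly via the Artin--Procesi machinery) it is an order in a central division algebra: the central polynomial for $M_n$ has nonzero, nonnilpotent central values on $R_n$, the central localization is central simple over the field of fractions of $Z(R_n)$, and an order in a simple ring is prime. Once $R_n$ is known to be prime, your lift of ideals back to $F\langle X\rangle$ finishes the argument. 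As a minor remark on the statement itself: the ``only if'' direction of the ``Furthermore'' clause (PI equivalent to $M_n(F)$ implies prime) is literally false ($M_n(F)\times F$ is a counterexample); you correctly use only the direction ``prime $\Rightarrow$ PI equivalent to $M_n(F)$,'' which is all that is needed and surely all that was intended.
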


As for the classification of verbally prime PI algebras recall the following fundamental result which is due to Kemer \cite{Kemer1984}.

\begin{thm} \label{basic theorem ungraded}

$A$ is a verbally prime PI algebra if and only if it is PI equivalent to the Grassmann envelope of a finite dimensional
$\mathbb{Z}/2\mathbb{Z}-$simple $F$-algebra $A$. It follows that if $A$ is also
affine then it is PI equivalent to a prime PI algebra.
\end{thm}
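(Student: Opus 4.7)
The plan is to prove the two directions separately, with essentially all the real work in the ``only if'' direction. For the ``if'' direction, I would suppose $A$ is PI-equivalent to $E(B)$ for some finite dimensional $\mathbb{Z}/2\mathbb{Z}$-simple $F$-algebra $B = B_0 \oplus B_1$. Using the standard dictionary between ordinary identities of the Grassmann envelope $E(B)$ and $\mathbb{Z}/2\mathbb{Z}$-graded identities of $B$, verbal primeness of $Id(E(B))$ reduces to a graded (``super'') verbal primeness of $Id_{\mathbb{Z}/2\mathbb{Z}}(B)$. Since $B$ is $\mathbb{Z}/2\mathbb{Z}$-simple, a direct evaluation argument then closes the case: given multilinear graded polynomials $f_1, f_2$ not in $Id_{\mathbb{Z}/2\mathbb{Z}}(B)$, nonzero graded evaluations of each $f_i$ generate all of $B$ as a graded ideal, so one can produce a nonzero graded evaluation of their product.

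The ``only if'' direction is where the real content lies, and the central tool I would use is Kemer's PI-representability theorem: every PI algebra over $F$ in characteristic zero is PI-equivalent to the Grassmann envelope $E(B)$ of some finite dimensional $\mathbb{Z}/2\mathbb{Z}$-graded $F$-algebra $B$. Granted this, I may assume $A = E(B)$ and then decompose $B$ via graded Wedderburn--Malcev as $B = B_{ss} \oplus J$, with $B_{ss}$ a direct sum of $\mathbb{Z}/2\mathbb{Z}$-simple summands $B^{(i)}$. Using Kemer's combinatorial machinery --- Kemer index, Kemer polynomials adapted to the $\mathbb{Z}/2\mathbb{Z}$-grading, alternations on odd and even slots --- I would show that verbal primeness of the ungraded $T$-ideal $Id(A)$ forces the radical $J$ to be PI-negligible and forces all but one of the $B^{(i)}$ to drop out, so that $Id(A) = Id(E(B^{(i)}))$ for a single $\mathbb{Z}/2\mathbb{Z}$-simple summand. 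This reduction is by far the main obstacle: both representability and the Kemer index machinery are essential, and no elementary argument bypasses them.

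It then remains to classify finite dimensional $\mathbb{Z}/2\mathbb{Z}$-simple $F$-algebras over algebraically closed $F$ of characteristic zero; classically these are $M_n(F)$ with trivial grading, $M_n(F) \oplus M_n(F)$ with the exchange grading, and $M_{a,b}(F)$ with the standard block grading, and their Grassmann envelopes are, up to PI equivalence, $M_n(F)$, $M_n(E)$, and $M_{a,b}(E)$, which are the familiar verbally prime PI algebras. For the ``furthermore'' claim, if $A$ is affine then $Id(A)$ contains a Capelli polynomial $c_N$, which forces the odd component $B_1$ of the associated superalgebra to be nilpotent; this rules out the $M_n(E)$ and $M_{a,b}(E)$ cases and leaves $B = M_n(F)$ trivially graded, so $A$ is PI-equivalent to the prime algebra $M_n(F)$.
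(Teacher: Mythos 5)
The paper does not actually prove Theorem~\ref{basic theorem ungraded}: it is cited to Kemer~\cite{Kemer1984}. What the paper \emph{does} prove, in Section~\ref{sec:Verbally-prime}, is the $G$-graded generalization (Theorem~\ref{classification of G-graded verbally prime}), which recovers the ungraded statement when $G$ is trivial, so a comparison against that argument is the natural one.

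Your overall plan matches the paper's. The ``if'' direction is handled the same way: for $B$ graded-simple, a nonidentity of $E(B)$ has graded evaluations that generate a nonzero (hence full) graded ideal of $B$, so a product of two nonidentities on disjoint variables is again a nonidentity. The ``only if'' direction also starts the same way, with Kemer's representability theorem (Theorem~(A) in the paper) writing $A\sim E(B)$ for $B$ finite dimensional and superalgebra-graded, and then reducing $B$ to semisimple and to a single graded-simple summand. The real difference is in \emph{how} you carry out that reduction. You propose to run the full Kemer-index/Kemer-polynomial machinery adapted to the $\mathbb{Z}/2\mathbb{Z}$-grading. The paper's route is considerably lighter: pass to a \emph{minimal} finite dimensional representative $B$ of the $T$-ideal and observe (Lemma~\ref{minimal_verbally_prime}) that if $J(B)\neq 0$ then minimality forces $Id(B)\varsubsetneq Id(B_{ss})$, one picks a multilinear $p\in Id(B_{ss})\setminus Id(B)$, so $p(B)\subseteq J$, and then nilpotency of $J$ produces integers $m$ with $p\cdots p$ ($m-1$ copies, disjoint variables) a nonidentity while $p\cdots p\, z_g\, p$ is an identity --- contradicting verbal primeness via the criterion of Lemma~\ref{not_verbally_prime}. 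The reduction to a single simple summand is then a two-line $T$-ideal argument (Lemma~\ref{product_verbally_prime}). So your plan works but invokes strictly heavier tools than necessary. Finally, for the affine claim, your route through ``the odd part $B_1$ is nilpotent'' is indirect: the paper instead quotes part~(1) of representability directly (affine PI in characteristic zero is PI-equivalent to a finite dimensional algebra with no Grassmann envelope), takes a minimal such algebra, and runs the same radical-is-zero-then-single-factor argument to land on $M_n(F)$; this is cleaner and avoids having to argue anything about $B_1$.
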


Posner's Theorem was recently extended to the $G$-graded setting by Karasik (see \cite{Karasik}). As a corollary he obtains a generalization of Theorem \ref{basic prime theorem ungraded} for $G$-graded algebras (see Theorem \ref{classification of G-graded prime} below).

Our goal in this paper is to extend Theorem \ref{basic theorem ungraded} to the $G$-graded setting where $G$ is an arbitrary finite group. Somewhat more precisely, we wish to classify $G-$graded algebras (up to graded PI equivalence) which are PI as ungraded algebras and whose $T$-ideal of $G$-graded identities satisfy conditions which are analogue to verbally prime (see below for the precise definitions).

\begin{rem}
A key ingredient in Kemer's proof of Theorem \ref{basic theorem ungraded} is his own celebrated Representability Theorem
which states that every PI algebra is PI equivalent to the Grassmann
envelope of a finite dimensional $\mathbb{Z}/2\mathbb{Z}-$ graded algebra.
Also in the $G$-graded setting an important tool in our proofs is the Representability Theorem for $G$-graded algebras (see Theorem (A) below or (\cite{AB}, Theorems 1.1 and 1.3)). Since the Representability Theorem makes sense only for $G$-graded algebras that are ungraded PI, we restrict our discussion to algebras of that type.

\end{rem}

It may seem that the generalization to $G-$ graded algebras is a natural extension of Kemer's original work. Surprisingly, this is not the case as we shall soon explain. It is well known that for ordinary $T$-ideals (i.e. ungraded) the property of a $T$-ideal $I$ being verbally prime may be expressed also in terms of polynomials, that is, $I$ is verbally prime if and only if the following holds: if $f,g \in F\left\langle X\right\rangle$ are polynomials defined on disjoint sets of variables with $fg \in I$, then either $f$ or $g$ is in $I$. Note that if $I$ is the $T$-ideal of identities of an algebra $A$, then the above condition on $f$ and $g$ says that if $fg$ is an identity of $A$ then either $f$ or $g$ is an identity of $A$. Most of the time we shall work with that terminology.

It turns out that the equivalence of the two conditions mentioned above for verbally primeness of $T$-ideals is not longer valid when considering $G-$graded $T$-ideals and hence we obtain two \textit{primeness} conditions
on $G-$graded $T$-ideals. One finds that the corresponding condition for $G$-graded $T$-ideals expressed in terms of polynomials is \textit{strictly stronger} and hence we will say that $G$-graded $T$-ideals (or $G$-graded algebras) satisfying that property are \textit{strongly verbally prime}. For future reference we record below the following definitions.

\begin{defn}
Let $\Gamma$ be a $G$-graded $T$-ideal which contains a nonzero PI.

\begin{enumerate}

\item

We say $\Gamma$ is \textit{$G$-graded prime} if for any $G$-graded ideals $I$ and $J$ with $IJ \subseteq \Gamma$, then
 $I \subseteq \Gamma$ or $J \subseteq \Gamma$.

\item
We say $\Gamma$ is \textit{$G$-graded verbally prime} if for any $G$-graded $T$-ideals $I$ and $J$ with $IJ \subseteq \Gamma$, then
 $I \subseteq \Gamma$ or $J \subseteq \Gamma$.

\item

We say $\Gamma$ is \textit{$G$-graded strongly verbally prime} if for any $G$-graded homogeneous polynomials $f$ and $g$ defined on \textit{disjoint} sets of variables with $fg \in \Gamma$, then $f \in \Gamma$ or $g \in \Gamma$.
\end{enumerate}

\end{defn}

As mentioned above, Posner's theorem was generalized to the $G$-graded setting by Karasik (\cite{Karasik}, Corollary 4.7). In particular it gives the classification of $G$-prime $T$-ideals.

\begin{thm}\label{classification of G-graded prime}
Let $A$ be a $G$-graded algebra which is PI as an ungraded algebra. Then its $T$-ideal of identities is prime if and only if $A$ is $G$-graded PI equivalent to a finite dimensional $G$-simple $F$-algebra.

\end{thm}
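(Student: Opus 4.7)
The plan is to prove the two implications separately. For the \emph{if} direction, suppose $A$ is $G$-graded PI-equivalent to a finite dimensional $G$-simple $F$-algebra $B$, so their $G$-graded $T$-ideals of identities coincide. To verify $G$-primeness of $Id(B)$, I would take $G$-graded ideals $I,J$ in the free $G$-graded algebra with $IJ \subseteq Id(B)$ and $I\not\subseteq Id(B)$, pick a homogeneous $f\in I$ with a graded evaluation $\phi(f)=b\neq 0$ in $B$, and observe that for every pair $\gamma_1,\gamma_2\in G$, every homogeneous $g\in J$ on a disjoint set of variables, and fresh homogeneous variables $u_{\gamma_1}, v_{\gamma_2}$, the polynomial $u_{\gamma_1} f v_{\gamma_2} g$ lies in $IJ\subseteq Id(B)$. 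Varying the evaluations of $u_{\gamma_1}, v_{\gamma_2}$ over homogeneous $a,c \in B$ independently of any evaluation $\psi$ of $g$ yields $abc\,\psi(g)=0$. Since $B$ is $G$-graded simple and unital, the two-sided graded ideal of $B$ generated by $b$ equals $B$, so $1_B=\sum_i a_i b c_i$ for suitable homogeneous $a_i,c_i$, forcing $\psi(g)=0$ and hence $J\subseteq Id(B)$.

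For the \emph{only if} direction I would follow Karasik's $G$-graded extension of the Amitsur--Posner theory (\cite{Karasik}). The first step is to pass from $G$-primeness of the $T$-ideal $Id(A)$ to the existence of an honest $G$-graded prime PI algebra $B$ with $Id(B)=Id(A)$. This would be achieved by constructing $B$ as a quotient of the relatively free $G$-graded algebra of $A$ by a $G$-graded ideal maximal among those disjoint from an appropriate multiplicative set of graded central elements; $G$-primeness of $Id(A)$ ensures such a quotient is itself $G$-graded prime. This step is the $G$-graded analog of Amitsur's classical theorem that a prime $T$-ideal is the ideal of identities of some prime algebra.

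The second step is to invoke the $G$-graded Posner theorem established in \cite{Karasik}: any $G$-graded prime PI algebra $B$ is an order in a $G$-simple algebra $Q=Q(B)$, finite dimensional over its $e$-component center $k=Z(Q)_e$, satisfying $Id(B)=Id(Q)$. The third step is descent from $k$ to $F$: since $F$ is algebraically closed of characteristic zero, finite dimensional $G$-simple algebras over any extension field $k/F$ are classified up to graded isomorphism by combinatorial data (a subgroup $H\leq G$ together with a cohomology class of $H$), and this data admits a natural $F$-form $B'$, a finite dimensional $G$-simple $F$-algebra with the same $G$-graded identities as $Q$. Chaining the PI-equivalences gives $A$ graded PI-equivalent to $B'$.

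The main obstacle lies in the first step of the hard direction: extracting from the abstract combinatorial condition of $G$-primeness of a $T$-ideal a bona fide $G$-graded prime algebra that realizes it. This requires a workable theory of $G$-graded central polynomials and a careful analysis of the lattice of $G$-graded ideals in the relatively free $G$-graded algebra, and is where the main technical effort of \cite{Karasik} resides; once that is in place, the remaining steps are close analogues of classical arguments from the ungraded theory.
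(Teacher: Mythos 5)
The paper does not prove this theorem itself; it invokes it as Corollary~4.7 of Karasik's $G$-graded Posner theorem (\cite{Karasik}), so there is no internal proof to compare against. Evaluated on its own terms, your \emph{if} direction is correct: from a nonzero homogeneous value $b$ of some $f\in I$, writing $1_B=\sum_i a_ib c_i$ with $a_i,c_i$ homogeneous (using that $B$ is unital and the graded two-sided ideal $BbB$ equals $B$) and using disjointness of variables to evaluate $u_{\gamma_1}fv_{\gamma_2}g$ as $a_ibc_i\psi(g)=0$ does force $\psi(g)=0$ for every graded evaluation $\psi$, hence $J\subseteq Id_G(B)$. The overall architecture of the \emph{only if} direction (relatively free algebra, graded Amitsur, graded Posner) is also the right one.

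However, your third step contains a genuine error. You assert that ``finite dimensional $G$-simple algebras over any extension field $k/F$ are classified up to graded isomorphism by combinatorial data $(H,[c],\mathfrak g)$''. That is false: the Bahturin--Sehgal--Zaicev structure theorem requires the base field to be algebraically closed. Over a non-algebraically closed $k$ (which the $e$-center $k=Z(Q)_e$ coming from Posner's theorem will typically be), $G$-simple algebras need not have this form --- already for trivial $G$ they include all central simple division $k$-algebras. Consequently you cannot ``read off an $F$-form'' of $Q$ directly from $k$-level data. The correct route is to first extend scalars: pass from $Q$ to $Q\otimes_k\bar k$, check that this remains $G$-simple (this uses $Z(Q)_e=k$), apply BSZ over $\bar k$ to get a presentation $(H,[c],\mathfrak g)$, and then observe that $Id_G(Q)=Id_G(Q\otimes_k\bar k)=Id_G(F^cH\otimes M_m(F))$ because all three algebras have the same multilinear $G$-graded identities over the infinite field $F$ and characteristic zero lets multilinear identities generate the $T$-ideal. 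As written, your proposal replaces this extension-then-comparison step with a classification statement that does not hold, which is a real gap even though the surrounding outline is sound.
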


The classification of $G$-graded verbally prime $T$-ideals (or algebras) was obtained by Berele and Bergen in \cite{BerBergen} and independently by Di Vincenzo in \cite{Di Vincenzo} for the important case where $G=\mathbb{Z}_{2}$. The precise result is as follows.

\begin{thm}\label{classification of G-graded verbally prime}
Let $A$ be a $G-$graded algebra which is also PI as an ungraded
algebra. Then $A$ is verbally prime if and only if it is $G-$graded
PI equivalent to the Grassmann envelope of a finite dimensional $G_{2}=\mathbb{Z}/2\mathbb{Z}\times G-$simple $F$-algebra. If $A$ is also affine, then $A$ is $G$-verbally
prime if and only if it is $G$-prime, that is, if and only if $A$ is  $G-$graded PI equivalent to a finite dimensional $G-$simple $F$-algebra.
\end{thm}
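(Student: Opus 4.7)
The plan is to apply the $G$-graded Representability Theorem (\cite{AB}, Theorems~1.1 and~1.3) to replace $A$ by the Grassmann envelope $E(B)=(B^{0}\otimes E^{0})\oplus (B^{1}\otimes E^{1})$ of some finite dimensional $G_{2}$-graded algebra $B=B^{0}\oplus B^{1}$, where $G_{2}=\mathbb{Z}/2\mathbb{Z}\times G$ and $E=E^{0}\oplus E^{1}$ is the infinite Grassmann algebra. Kemer's correspondence translates multilinear $G$-graded identities of $E(B)$ to multilinear $G_{2}$-graded identities of $B$ and, crucially, sends products of polynomials on disjoint variable sets to products of the same form. Since in characteristic zero every $T$-ideal is generated by its multilinear elements, $E(B)$ is $G$-verbally prime if and only if the analogous primeness condition on multilinear $G_{2}$-graded polynomials in disjoint variables holds for $B$. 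The theorem then reduces to showing that this property of $B$ is equivalent to $B$ being $G_{2}$-simple.

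For sufficiency, assume $B$ is $G_{2}$-simple. By the Bahturin--Sehgal--Zaicev classification (using that we work over an algebraically closed field of characteristic zero), $B$ is a matrix algebra with a prescribed combination of an elementary and a fine $G_{2}$-grading. One then constructs a $G_{2}$-graded generic algebra in the spirit of Amitsur and verifies it is a $G_{2}$-graded domain. Consequently, if multilinear $G_{2}$-homogeneous $\tilde f,\tilde g$ on disjoint variable sets satisfy $\tilde f \tilde g\in Id_{G_{2}}(B)$, then substituting generic elements yields a product that vanishes in the generic algebra, forcing one of $\tilde f,\tilde g$ to vanish there and hence to lie in $Id_{G_{2}}(B)$. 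Translating back, one of the original $T$-ideal witnesses lies in $Id_{G}(E(B))$.

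For necessity, suppose $B$ is not $G_{2}$-simple. Write $B/J(B)\cong B_{1}\times\cdots\times B_{r}$ as a product of $G_{2}$-simple algebras with $J(B)^{N}=0$. Using Kemer's construction of separating polynomials --- multilinear $G_{2}$-graded polynomials $p_{k}$ that are nonidentities for $B_{k}$ but identities on the other blocks --- together with multiplicative bookkeeping that absorbs the nilpotent radical, one produces multilinear $G_{2}$-graded $f,g$ on disjoint variables with $fg\in Id_{G_{2}}(B)$ but neither factor in $Id_{G_{2}}(B)$. The Grassmann correspondence then yields $G$-graded $T$-ideals violating verbal primeness of $E(B)$. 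For the affine part, the Representability Theorem permits taking $B^{1}=0$, so $E(B)\sim_{PI} B$ as $G$-graded algebras; $G_{2}$-simplicity collapses to $G$-simplicity and Theorem~\ref{classification of G-graded prime} gives the equivalence with $G$-primeness.

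The main obstacle I anticipate is the sufficiency direction: setting up the $G_{2}$-graded generic algebra of a $G_{2}$-simple $B$ and proving it is a graded domain. In the ungraded case this is Amitsur's classical generic matrix theorem; the graded version requires careful bookkeeping of the twisted form together with the graded Schur lemma to rule out zero divisors that might separate multilinear polynomials in disjoint variables. Once this step is in place, the Kemer-polynomial manipulations of the necessity direction and the affine reduction should follow the ungraded template rather closely.
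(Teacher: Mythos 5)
Your sufficiency direction rests on a claim that is false in general, and the failure is precisely the distinction this paper is built around. You assert that the $G_{2}$-graded generic (relatively free) algebra of a $G_{2}$-simple algebra $B$ is a graded domain, so that $\tilde f\tilde g\in Id_{G_{2}}(B)$ with $\tilde f,\tilde g$ on disjoint variables forces one of them into $Id_{G_{2}}(B)$. But that property of the relatively free algebra is exactly the \emph{strongly} verbally prime condition (compare Theorem \ref{nondisjoin variables}), and Theorem \ref{main theorem1} shows that a $G$-simple algebra has it only under the extra hypotheses that $H$ is normal, the $H$-cosets occur with equal multiplicity in $\mathfrak{g}$, and $[c]$ is $G$-invariant. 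The paper even names a concrete counterexample: $M_{3}(F)$ with elementary $\mathbb{Z}_{2}$-grading $(e,e,\sigma)$ is $\mathbb{Z}_{2}$-simple and verbally prime but not strongly verbally prime, so its graded generic algebra has homogeneous zero divisors. Hence the ungraded analogue of Amitsur's generic-matrix-domain theorem does not carry over, and your argument proves too much from too little.

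The underlying confusion is that you dropped the connecting variable. For $T$-ideals $I=\langle p\rangle_{T}$ and $J=\langle q\rangle_{T}$, the inclusion $IJ\subseteq\Gamma$ is equivalent to $p\,z_{g}\,q\in\Gamma$ for all $g\in G$ (this is Lemma \ref{not_verbally_prime}), \emph{not} to $pq\in\Gamma$. With the $z_{g}$ present, the paper's sufficiency argument is short and needs no generic algebra at all: for a finite dimensional $G$-simple $A$, the set of evaluations of a $T$-ideal $I$ on $A$ is a $G$-graded ideal $\tilde I$ of $A$, so $I\nsubseteq\Gamma$ forces $\tilde I=A$; then $\widetilde{IJ}=\tilde I\tilde J=A$, so $IJ\nsubseteq\Gamma$. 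For the Grassmann envelope the same idea is run through the auxiliary set $I_{A}\subseteq A$. Your necessity sketch has a parallel gap — producing $f,g$ with $fg\in Id_{G_{2}}(B)$ only refutes strong verbal primeness; to refute verbal primeness you must exhibit $p,q$ with $p z_{g} q\in\Gamma$ for every $g$, which is what the paper's minimality argument (Lemma \ref{minimal_verbally_prime}, using $p(X)^{m-1}z_{g}p(Y)$) actually delivers. The affine reduction you give is fine.
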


The classification of $G$-graded $T$-ideals satisfying the stronger condition, namely $G$-\textit{strongly verbally prime}, appears to be more involved. One needs to translate the language of $G-$graded polynomial identities into the algebraic structure of $G$-simple algebras. In this paper we provide a full answer in the affine case, namely we will classify up to $G-$graded PI equivalence $G-$strongly verbally prime algebras which are affine. At this point, the nonaffine case remains open.

From Theorem \ref{classification of G-graded verbally prime} and the fact that \textit{strongly verbally primeness} implies \textit{verbally primeness} we have that in order to classify the $G$-graded affine algebras which are strongly verbally prime we need to determine the finite dimensional $G$-simple algebras $A$ which satisfy the following condition:
if $f$ and $g$ are $G$-graded homogeneous polynomials (with respect to the $G$-grading) with disjoint sets of graded variables and such that $fg$ is an identity of $A$, then $f$ or $g$ is an identity of $A$. In order to state our main results we recall the structure of finite dimensional $G$-graded algebras which are $G$-simple. This result is key and is due to Bahturin, Sehgal and Zaicev \cite{BSZ}.

\begin{thm}
Let $A$ be a f.d. $G-$simple algebra over an algebraically closed field
$F$ of zero characteristic. Then there is a subgroup $H$ of $G$, a two cocycle $c\in Z^{2}(H,F^{*})$
$($the $H-$action on $F^{*}$ is trivial$)$ and $\mathfrak{g}=(g_{1},\ldots,g_{m}) \in G ^{(m)}$,
such that $A\cong F^{c}H\otimes M_{m}(F)$, where $($by this identification$)$ the $G-$grading on $A$ is given
by
\[
A_{g}=\mbox{span}_{F}\left\{ u_{h}\otimes e_{i,j}|g=g_{i}^{-1}hg_{j}\right\} .
\]
\end{thm}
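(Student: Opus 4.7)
The plan is to prove this structure theorem, due to Bahturin--Sehgal--Zaicev, by realizing $A$ as a matrix algebra over a $G$-graded division algebra and then identifying the latter as a twisted group algebra $F^{c}H$ of a subgroup $H \leq G$. First I would show that $A$ is semisimple as an ungraded algebra: since $\mathrm{char}(F) = 0$ the Jacobson radical $J(A)$ is a $G$-graded ideal, and being nilpotent it is a proper graded ideal, hence $J(A) = 0$ by $G$-simplicity. Thus $A$ is semisimple Artinian.

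Next, I would pick a minimal $G$-graded left ideal $L \subseteq A$ and set $D = \mathrm{End}_{A}^{\mathrm{gr}}(L)$. A graded Schur Lemma argument shows every nonzero homogeneous element of $D$ is invertible, so $D$ is a $G$-graded division algebra. A graded Jacobson density / Wedderburn argument, together with $G$-simplicity of $A$ (which makes the natural map $A \to \mathrm{End}_{D}(L)$ injective), yields $A \cong \mathrm{End}_{D}(L)$ as $G$-graded $F$-algebras. Choosing a homogeneous $D$-basis $l_{1}, \ldots, l_{m}$ of $L$ with $\deg(l_{i}) = g_{i}$ then gives an identification $A \cong D \otimes_{F} M_{m}(F)$ in which the matrix unit $e_{i,j}$ is the graded endomorphism sending $l_{j}$ to $l_{i}$.

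To identify $D$, note that its identity component $D_{e}$ is a finite-dimensional division $F$-algebra, and is therefore equal to $F$ as $F$ is algebraically closed. The support $H = \{h \in G : D_{h} \neq 0\}$ is closed under products (since $D$ is an algebra and a nonzero homogeneous element is not a zero divisor) and under inverses (writing $u_{h}^{-1} = \sum_{g} x_{g}$ and using that $u_{h}$ is not a zero divisor forces all $x_{g}$ with $g \neq h^{-1}$ to vanish), so $H$ is a subgroup of $G$. For each $h \in H$, left multiplication by any nonzero $u_{h} \in D_{h}$ is an $F$-isomorphism $D_{e} \to D_{h}$, giving $\dim_{F} D_{h} = 1$. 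The products $u_{h}u_{h'} = c(h,h') u_{hh'}$ define a $2$-cocycle $c \in Z^{2}(H, F^{*})$, so $D \cong F^{c}H$ as $G$-graded algebras. Tracking the degree of $u_{h} \otimes e_{i,j}$ on the basis $l_{1}, \ldots, l_{m}$ then produces the stated formula $A_{g} = \mathrm{span}_{F}\{u_{h} \otimes e_{i,j} : g = g_{i}^{-1} h g_{j}\}$.

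The hardest step will be the graded Wedderburn / density argument establishing $A \cong \mathrm{End}_{D}(L)$: injectivity is immediate from $G$-simplicity, but surjectivity requires a graded density theorem together with a dimension count through the decomposition of $A$ into minimal graded left ideals, each isomorphic as a graded $A$-module to some degree-shift of $L$. A secondary technical point is verifying that the Jacobson radical is genuinely graded in characteristic zero; this follows from the general principle that characteristic ideals of $G$-graded algebras are graded when $\mathrm{char}(F) \nmid |G|$.
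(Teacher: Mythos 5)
The paper does not prove this theorem; it is stated as known background attributed to Bahturin, Sehgal and Zaicev \cite{BSZ}, so there is no in-paper proof to compare your attempt against. Your sketch is nevertheless a correct outline of the standard argument and, as far as I know, is essentially the route of \cite{BSZ} in the algebraically closed case: pass to a minimal graded left ideal $L$, show $D=\mathrm{End}_A(L)$ --- which is what your $\mathrm{End}_A^{\mathrm{gr}}(L)$ amounts to once one endows $\mathrm{End}_A(L)$ with its natural $G$-grading, these coinciding for a finitely generated graded module --- is a graded division algebra by a graded Schur lemma, obtain $A\cong\mathrm{End}_D(L)\cong M_m(D)$ by a graded Wedderburn/density argument, and identify $D\cong F^cH$ using $D_e=F$, the subgroup property of the support of $D$, and $\dim_F D_h=1$. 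The two points you flag as needing real work are indeed the right ones: surjectivity of $A\to\mathrm{End}_D(L)$ (injectivity is immediate since the kernel is a proper graded two-sided ideal), and gradedness of $J(A)$, for which the paper itself cites Cohen--Montgomery \cite{Cohen-Montgomery}. One caution on the final bookkeeping step: the exact form $g_i^{-1}hg_j$ in the displayed grading formula is sensitive to conventions --- left versus right minimal graded ideals, the grading convention on $\mathrm{Hom}$ spaces, and whether the $g_i$ are taken to be the degrees or the inverse degrees of the chosen homogeneous $D$-basis --- so the conventions must be chosen consistently to land on the paper's formula rather than an equivalent conjugate variant such as $g_ihg_j^{-1}$.
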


It is easy to see that up to a $G$-graded isomorphism the algebra $A$ does not depend on the $2$-cocycle $c$ but only on its cohomology class $[c] \in H^{2}(H,F^*)$.
We therefore denote the $G$-grading on the algebra $A$ by $P_{A}=(H,[c],\mathfrak{g})$. We also say that $(H,[c],\mathfrak{g})$ is a presentation of the $G$-graded algebra $A$.

Note that if an algebra $A$ is $G$-graded then it is $U$-graded in a natural way whenever $G$ is a subgroup of $U$ (just by putting $A_{u}=0$ for $u \in U - G$). It is clear in that case that $A$ is strongly verbally prime as a $G$-graded algebra if and only if $A$ is strongly verbally prime as a $U$-graded algebra. We therefore assume for the rest of the paper that the grading is \textit{connected}. This means that the support of the grading generates the group $G$.
Here is one of the two main results of the paper.

\begin{thm} \label{main theorem1}
Let $\Gamma$ be a $G$-graded $T$-ideal where $G$ is a finite group. Suppose $\Gamma$ contains an ungraded Capelli polynomial $c_{n}$ for some integer $n$ (equivalently, $\Gamma$ is the $T$-ideal of $G$-graded identities of an affine PI algebra). Then the ideal $\Gamma$ is $G$-graded strongly verbally prime if and only if it is the $T$-ideal of $G$-graded identities of a finite dimensional $G$-simple algebra $A$ with presentation $P_{A}=(H, \alpha, \mathfrak{g}=(g_1,\ldots,g_k))$ satisfying the following conditions:

\begin{enumerate}
\item
The group $H$ is normal in $G$.

\item

The different cosets of $H$ in $G$ are equally represented in the $k$-tuple $\mathfrak{g}=(g_1,\ldots,g_{k})$. In particular the integer $k$ is a multiple of $[G:H]$.

\item

The cohomology class $\alpha \in H^{2}(H,F^{*})$ is $G$-invariant. Here $G$ acts on $H^{2}(H,F^{*})$ via conjugation on $H$ and trivially on $F^{*}$.

\end{enumerate}

\end{thm}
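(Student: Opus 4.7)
Both directions go through the identification, afforded by Theorem \ref{classification of G-graded verbally prime} in the affine setting, of $\Gamma$ with $\mathrm{Id}_{G}(A)$ for some finite dimensional $G$-simple algebra $A$ with presentation $P_{A}=(H,\alpha,\mathfrak{g})$: strong verbal primeness implies verbal primeness, so one may assume this shape from the outset, and then characterise which such $A$ have strongly verbally prime graded $T$-ideal. The guiding idea, already flagged in the abstract, is that conditions (1)--(3) are exactly what is needed for $A$ to admit a $G$-graded division algebra twisted form over a larger field, and a graded division algebra is automatically strongly verbally prime.

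For sufficiency, assuming (1)--(3), I would produce a field extension $F\subseteq k$ containing enough roots of unity (a primitive $\mathrm{ord}(G)$-th root suffices) and a $G$-graded isomorphism $A\otimes_{F}k\cong D$, where $D$ is a $G$-graded $k$-division algebra. The three conditions feed directly into the construction: normality of $H$ lets $G/H$ act by conjugation on $H$; equal representation of cosets gives, up to reordering, $\mathfrak{g}=(t_{1},\ldots,t_{n})$ repeated $r$ times and hence $M_{m}(F)\cong M_{r}(F)\otimes M_{n}(F)$ with the $M_{n}$-factor indexed by $G/H$; and $G$-invariance of $[\alpha]$ is precisely the obstruction for extending $\alpha$ on $H$ to a $2$-cocycle $\tilde{c}$ on $G$ with values in $k^{*}$. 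The resulting $D$ has each nonzero homogeneous component $k$-one-dimensional and spanned by an invertible element. Since graded identities descend under the faithful scalar extension, strong verbal primeness of $\mathrm{Id}_{G}(A)$ reduces to that of $\mathrm{Id}_{G}(D)$, and the latter is immediate: given homogeneous $f,g$ on disjoint variable sets, neither in $\mathrm{Id}_{G}(D)$, pick graded assignments $\phi,\psi$ with $\phi(f),\psi(g)\neq 0$, combine them (possible by disjointness), and obtain $\phi(f)\psi(g)$ -- a product of invertible homogeneous elements of $D$, hence nonzero, contradicting $fg\in\mathrm{Id}_{G}(D)$.

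For necessity I argue by contrapositive, producing in each case a pair $f,g$ on disjoint graded variables with $fg\in\mathrm{Id}_{G}(A)$ but neither $f$ nor $g$ in it. If $H$ is not normal, a $g_{0}\in G$ with $g_{0}Hg_{0}^{-1}\neq H$ makes the $H$-isotypic and the $g_{0}Hg_{0}^{-1}$-isotypic parts of $A$ misalign in a way one can translate into the required disjoint-variable factorisation. If the cosets are unequally represented, $\dim_{F}A_{g}$ is nonconstant on the support of the grading, and suitable Capelli-type polynomials in variables of a large- versus a small-dimensional homogeneous degree do the job. If $[\alpha]$ is not $G$-invariant, some $g_{0}$ with $[\alpha^{g_{0}}]\neq[\alpha]$ allows one to detect, via polynomials built from the basis elements $u_{h}$ and $u_{g_{0}hg_{0}^{-1}}$, a cocycle mismatch that converts to the required factorisation. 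I expect the cohomological case (condition~(3)) to be the main obstacle: dimension and normality are visible to polynomial identities in fairly direct ways, whereas $G$-non-invariance of a class in $H^{2}(H,F^{*})$ has to be extracted from scalar discrepancies in products of the $u_{h}$'s, and this is where most of the technical work of the proof should concentrate.
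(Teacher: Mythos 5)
Your high-level strategy---reduce to a finite dimensional $G$-simple $A$, then characterise strong verbal primeness via the existence of a $G$-graded division algebra twisted form---is exactly the conceptual content of the paper (it is the main corollary combining Theorems \ref{main theorem1} and \ref{main theorem2}). But the step you propose for manufacturing the form is not correct, and this is where the gap lies.

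You assert that $G$-invariance of $[\alpha]\in H^2(H,F^*)$ ``is precisely the obstruction for extending $\alpha$ on $H$ to a $2$-cocycle $\tilde c$ on $G$.'' That is false: $G$-invariance of a class on a normal subgroup $H$ is necessary but not sufficient for it to lie in the image of the restriction $H^2(G,F^*)\to H^2(H,F^*)^{G}$; there are further obstructions (coming from the Lyndon--Hochschild--Serre spectral sequence), so the cocycle $\tilde c$ you need may simply not exist. Moreover, even if one had such a $\tilde c$, your proposed $D$ with ``each nonzero homogeneous component $k$-one-dimensional'' can only account for the case $r=1$ of the decomposition $A\cong F^{c}H\otimes M_{[G/H]}(F)\otimes M_{r}(F)$; when $r>1$ the homogeneous components of any twisted form are $r^{2}$-dimensional and a separate construction is needed. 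The paper circumvents both difficulties: it obtains the division algebra form not by extending the cocycle but by taking the relatively free $G$-graded algebra $U_{G}=F\langle X_{G}\rangle/Id_{G}(A)$, showing $J(U_{G})=0$ using strong verbal primeness, and then central-localising at $Z(U_{G})_{e}\setminus\{0\}$ (Karasik's graded Posner theorem). Absence of zero divisors in the $e$-component is deduced from a Zariski-density argument combined with the \emph{path property}; and the passage from $r=1$ to general $r$ is handled by tensoring with a symbol algebra $(t,s)_{r}$ over a rational function field. None of that is visible in your sketch.

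For necessity, your instincts about which invariant each condition reflects are reasonable, and you are right that the cohomological case (3) is the technical core. But the mechanism the paper actually uses is uniformly the same in all three cases and is missing from your sketch: one builds an explicit alternating ``Capelli-type'' polynomial $p=\mathrm{Alt}_{X}Z$ from a monomial $Z=Z_1 w_1 Z_2\cdots w_{k-1}Z_k$ (with $e$-graded frames, alternated $x$'s and bridges $w_i$), shows $p\notin Id_{G}(A)$, but shows $p(A)\subseteq F^{c}H\otimes R$ for a subspace $R$ of strictly upper-triangular block type with $R^{2}=0$; then $p$ and a disjoint-variable copy of $p$ violate strong verbal primeness. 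For condition (3) the additional ingredient is to insert into $Z$ multilinear detectors $R(g_i([c]), m_{block})$ built from Regev central polynomials and the \emph{binomial identities} of twisted group algebras (which determine the class $[c]$), so that a nonzero evaluation forces the $e$-blocks to carry specific conjugates $g_i\cdot[c]$; the ``$u_h$ versus $u_{g_0hg_0^{-1}}$ scalar mismatch'' you allude to is realised precisely through these binomial identities, but translating it into a disjoint-variable factorisation requires the full $R(\cdot)$-insertion machinery, not just the bare observation of a cocycle discrepancy.
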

The following special cases are of interest.

\begin{cor}
Let $A$ be a $G$-simple algebra with presentation $P_{A}=(H,\alpha,\mathfrak{g})$. Suppose $H$ is normal and $H$-cosets representatives are equally represented in $\mathfrak{g}$. If $\alpha$ is trivial then $A$ is strongly verbally prime.

\end{cor}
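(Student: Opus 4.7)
The plan is to obtain the corollary as a direct application of Theorem \ref{main theorem1}. I will verify that its hypotheses hold for the $T$-ideal $\Gamma$ of $G$-graded identities of $A$, and that the three structural conditions on the presentation $(H,\alpha,\mathfrak{g})$ required by that theorem are satisfied.

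Since $A$ is $G$-simple it is in particular finite dimensional, so $\Gamma$ contains the ungraded Capelli polynomial $c_{n}$ for any $n > \dim_{F} A$; this places us in the affine setting of Theorem \ref{main theorem1}. Conditions (1) and (2) of that theorem---namely that $H$ is normal in $G$ and that the $H$-cosets are equally represented in $\mathfrak{g}$---are exactly the standing hypotheses of the corollary, so only condition (3) remains, namely that $\alpha \in H^{2}(H,F^{*})$ is $G$-invariant.

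To check this, recall that the $G$-action on $H^{2}(H,F^{*})$ is induced by the conjugation action on $H$ (which lands in $\mathrm{Aut}(H)$ because $H$ is normal) together with the trivial action on $F^{*}$. On the level of cocycles $c \in Z^{2}(H,F^{*})$, an element $g \in G$ sends $c$ to $c^{g}(h_{1},h_{2}) := c(g^{-1}h_{1}g,\, g^{-1}h_{2}g)$. When $\alpha$ is trivial, represented by the constant cocycle $1$ or, more generally, by a coboundary $\delta\varphi$, the cocycle $c^{g}$ is again a coboundary (namely the coboundary of $\varphi$ precomposed with conjugation by $g$), so $[c^{g}]=[c]$ for every $g \in G$. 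Hence condition (3) is automatic, and Theorem \ref{main theorem1} yields that $\Gamma$ is $G$-graded strongly verbally prime, i.e.\ $A$ is strongly verbally prime. There is no real obstacle here: the entire content of the corollary is the elementary observation that triviality of $\alpha$ forces its $G$-invariance, regardless of how conjugation acts on $H$.
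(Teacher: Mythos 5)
Your proof is correct and takes the same route the paper intends: the corollary is stated immediately after Theorem \ref{main theorem1} as a direct specialization, and the only nontrivial observation is precisely the one you make, that the trivial cohomology class is fixed by any group action, so condition $(3)$ is automatic.
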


\begin{cor}
Suppose $G$ is finite abelian and let $A$ be a $G$-simple algebra with presentation $P_{A}=(H,\alpha,\mathfrak{g})$. Then $A$ is strongly verbally prime if and only if the $H$-cosets representatives are equally represented in $\mathfrak{g}$.
\end{cor}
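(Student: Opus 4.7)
The plan is to derive the corollary directly from Theorem \ref{main theorem1} by observing that conditions (1) and (3) there become automatic when $G$ is abelian, so only condition (2)---equal representation of the cosets of $H$ in the tuple $\mathfrak{g}$---survives as a nontrivial restriction.

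First I would check applicability of Theorem \ref{main theorem1}. The algebra $A$ is finite dimensional and $G$-simple over the algebraically closed field $F$ of characteristic zero, hence in particular affine and PI, so its $T$-ideal of $G$-graded identities contains an ungraded Capelli polynomial $c_{n}$ for some $n$. Thus Theorem \ref{main theorem1} is available and characterizes strong verbal primeness of $A$ purely in terms of the three conditions on its presentation $P_{A}=(H,\alpha,\mathfrak{g})$.

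Next I would dispose of conditions (1) and (3). Condition (1), namely that $H$ is normal in $G$, is automatic since every subgroup of an abelian group is normal. For condition (3), the $G$-action on $H^{2}(H,F^{*})$ specified in Theorem \ref{main theorem1} is induced by the conjugation action of $G$ on $H$ together with the trivial action on $F^{*}$; when $G$ is abelian the conjugation action on $H$ is trivial, hence the induced action on $H^{2}(H,F^{*})$ is trivial as well, and every class $\alpha\in H^{2}(H,F^{*})$ is automatically $G$-invariant.

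With (1) and (3) always satisfied in the abelian setting, Theorem \ref{main theorem1} reduces exactly to the assertion that $A$ is strongly verbally prime if and only if the different cosets of $H$ in $G$ are equally represented in $\mathfrak{g}=(g_{1},\ldots,g_{k})$, which is precisely the corollary. I do not anticipate any real obstacle here: the substantive work is hidden inside Theorem \ref{main theorem1}, and deducing the abelian case from it is essentially bookkeeping, recording that abelianness trivialises the normality and cocycle-invariance hypotheses.
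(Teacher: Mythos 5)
Your proof is correct and takes exactly the approach the paper intends: the corollary is stated as an immediate specialization of Theorem \ref{main theorem1}, and you correctly observe that for abelian $G$ condition (1) is automatic (every subgroup is normal) and condition (3) is automatic (the conjugation action of $G$ on $H$, hence on $H^{2}(H,F^{*})$, is trivial), so only condition (2) remains.
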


In particular we have

\begin{cor}
Let $A$ be a finite dimensional $\mathbb{Z}_{2}=\{e, \sigma\}$-simple algebra over $F$ with presentation $P_{A}=(H,\alpha,\mathfrak{g})$. Then $A$ is strongly verbally prime if and only if $H \cong \mathbb{Z}_{2}$ or else $H = \{e\}$ and $\mathfrak{g}= (e,\ldots,e, \sigma, \ldots, \sigma)$ where $e$ and $\sigma$ have the same multiplicity.
\end{cor}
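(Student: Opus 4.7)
The plan is to specialize the preceding corollary, which covers all finite abelian groups, to the case $G = \mathbb{Z}_2$ and simply enumerate the two possible subgroups of $\mathbb{Z}_2$. Since $\mathbb{Z}_2$ is abelian, the preceding corollary says $A$ is strongly verbally prime if and only if the $H$-coset representatives are equally represented in $\mathfrak{g}$, so I just need to unpack this condition for each choice of $H \le \mathbb{Z}_2$.

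If $H = \mathbb{Z}_2$, then $G/H$ consists of a single coset, so the equal-representation condition is vacuous and no constraint on $\mathfrak{g}$ arises. This yields the first alternative $H \cong \mathbb{Z}_2$. If instead $H = \{e\}$, then $G/H = \bigl\{\{e\}, \{\sigma\}\bigr\}$, and equal representation forces $e$ and $\sigma$ to appear with the same multiplicity in $\mathfrak{g}$. After permuting the entries of $\mathfrak{g}$ (which does not change the $G$-graded isomorphism class of $A$) we may write $\mathfrak{g} = (e, \ldots, e, \sigma, \ldots, \sigma)$ with equal multiplicities; the standing assumption that the grading is connected ensures that both multiplicities are strictly positive, since otherwise the support of the grading would be contained in $\{e\}$.

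There is essentially no obstacle. For completeness I would note that conditions (1) and (3) of Theorem \ref{main theorem1} are automatic in this setting: normality of $H$ holds because $G$ is abelian, and the cocycle condition is vacuous because $H^2(H, F^*) = 0$ for both $H = \{e\}$ and $H = \mathbb{Z}_2$ (the Schur multiplier of a cyclic group with trivial action on a divisible abelian group vanishes, and $F^*$ is divisible as $F$ is algebraically closed of characteristic zero). Thus the classification reduces exactly to the coset-representation condition handled in the two cases above.
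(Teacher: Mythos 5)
Your proof is correct and follows the route the paper itself clearly intends: the corollary is an immediate specialization of the preceding abelian corollary (which in turn follows from Theorem \ref{main theorem1}). Your enumeration of the two subgroups, the observation that normality and the cocycle invariance are automatic for $G=\mathbb{Z}_2$ (the latter because $H^2(\{e\},F^*)=H^2(\mathbb{Z}_2,F^*)=0$ since $F^*$ is divisible), and the use of connectedness to force both multiplicities to be positive in the $H=\{e\}$ case are all exactly what the statement needs, with no gaps.
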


One of the main features of finite dimensional strongly verbally prime algebras is its intimate relation to finite dimensional $G$-graded division algebras, that is finite dimensional $G$-graded algebras whose nonzero homogeneous elements are invertible (e.g. the group algebra $FG$). Naturally, these algebras will be defined over fields $K$ which are usually nonalgebraically closed. Nonetheless, for our purposes, we'll only consider fields which contain $F$, and hence any algebra we consider in this paper is by default an $F$-algebra.



Let $A$ be a finite dimensional $G$-simple $F$-algebra and suppose further that $Z(A)_{e} = F1_{A}$ where $e$ is the identity element of $G$. It is easy to see that if $A$ is a $G$-graded division algebra, then necessarily $A$ is $G$-graded strongly verbally prime. Considerably more generally, if an algebra $A$ admits a $G$-graded $K$-form $B_{K}$ (in the sense of \textit{descent}) for which every nonzero homogeneous element is invertible then again $A$ is strongly verbally prime. We say that $A$ \textit{has a $G$-graded $K$-form division algebra}.

Using Theorem \ref{main theorem1} we obtain the second main result of the paper. Notation as above.

\begin{thm}\label{main theorem2}
Let $A$ be a finite dimensional algebra $G-$simple $F$-algebra where $Z(A)_{e} = F1_{A}$. Then $A$ is strongly verbally prime if and only if it has a $G$-graded division algebra form. That is, there exists a finite dimensional $G$-graded division algebra $B$ over $K$ where $Z(B)_{e} = K1_{B}$, and a field $E$ which extends $K$ (and $F$) such that $B_{E}$ and $A_{E}$ are $G$-graded isomorphic algebras over $E$.
\end{thm}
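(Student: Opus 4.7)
\textbf{Easy direction.} Assume $A$ admits a $G$-graded $K$-form $B$ that is a graded division algebra with $B_E \cong A_E$ over some common extension $E \supseteq K,F$. The key point is that $B$ itself is $G$-graded strongly verbally prime: for any homogeneous $G$-graded polynomials $f,g$ on disjoint variable sets with $fg \in \mathrm{Id}^G(B)$, if neither were an identity one could pick graded evaluations in $B$ making $f$ and $g$ equal to nonzero homogeneous (hence invertible) elements; the disjointness of variables lets us realize both evaluations simultaneously, producing a nonzero evaluation of $fg$ (the product of two invertible homogeneous elements is nonzero), contradicting $fg \in \mathrm{Id}^G(B)$. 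Since graded identities of PI algebras over infinite fields of characteristic zero are preserved by extension of scalars, strong verbal primeness transfers from $B$ through $B_E \cong A_E$ to $A$.

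\textbf{Hard direction.} Suppose $A$ is strongly verbally prime, and let $(H,\alpha,\mathfrak{g})$ be its presentation as provided by Theorem \ref{main theorem1}. The plan is to construct $B$ as a \emph{generic graded division algebra}, in the spirit of Amitsur. Fix a countable supply of $G$-graded variables $Y=\{y_{g,i}\}_{g\in G,\,i\in\mathbb{N}}$ with $\deg y_{g,i}=g$, and form the relatively free $G$-graded algebra $U = F\langle Y\rangle^G / \mathrm{Id}^G(A)$. Because $A$ is $G$-simple, $\mathrm{Id}^G(A)$ is $G$-graded prime (Theorem \ref{classification of G-graded prime}), so by the $G$-graded version of Posner's theorem of Karasik, $U$ is a $G$-prime PI ring and admits a $G$-graded classical ring of quotients $Q(U)$ which is $G$-graded simple and finite-dimensional over its center.

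The main technical obstacle is producing inside $Q(U)$ a $G$-graded \emph{division} subalgebra $B$ by inverting every nonzero homogeneous element of $U$. Strong verbal primeness supplies the essential input: for a generic variable $\bar y_{g,i}$ and any nonzero $v \in U$, a polynomial representative $p$ of $v$ may be chosen in variables disjoint from $y_{g,i}$; the relation $\bar y_{g,i}\cdot \bar p = 0$ would exhibit $y_{g,i}\cdot p \in \mathrm{Id}^G(A)$ with neither factor an identity, contradicting strong verbal primeness. Iterating this argument, and using central polynomials together with the Capelli polynomial available from the affine hypothesis inherited from Theorem \ref{main theorem1}, every nonzero homogeneous element of $U$ is shown to be regular, and the Ore condition is verified at the multiplicative set $\Sigma$ of nonzero homogeneous elements. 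The graded Ore localization $B=\Sigma^{-1}U$ is then a $G$-graded division algebra; this is the step where strong verbal primeness is used in its full strength and is the crux of the argument.

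Finally, let $K := Z(B)_e$, a field extending $F$, so that $B$ is a finite-dimensional $G$-graded division $K$-algebra with $Z(B)_e = K$. To identify $B$ as a $K$-form of $A$, base-change to any algebraically closed extension $E$ of $K$: both $A_E$ and $B_E$ are finite-dimensional $G$-simple $E$-algebras with identical $G$-graded $T$-ideals (each equals $\mathrm{Id}^G(A)\otimes E$). By the Bahturin--Sehgal--Zaicev classification and the fact that over an algebraically closed field the $G$-graded identities of a $G$-simple algebra determine its presentation $(H,\alpha,\mathfrak{g})$ up to isomorphism, we conclude $A_E \cong B_E$ as $G$-graded $E$-algebras, completing the proof.
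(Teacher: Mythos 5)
Your easy direction matches the paper's. The hard direction, however, has a genuine gap that the paper's argument is specifically designed to circumvent, and I do not think the plan as written can be completed without essentially replicating the paper's method.

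The crux is your claim that strong verbal primeness forces every nonzero homogeneous element of the relatively free algebra $U$ to be regular, so that one can perform a graded Ore localization at the set $\Sigma$ of all nonzero homogeneous elements. Strong verbal primeness only controls products $fg$ when $f$ and $g$ are supported on \emph{disjoint} sets of graded variables. Your argument handles a generic variable $\bar{y}_{g,i}$ multiplied against an element $\bar{p}$ whose representative avoids $y_{g,i}$ --- that case is fine. But for two arbitrary nonzero homogeneous $u,v \in U$ with $uv=0$, their representatives may (and typically will) share variables, and renaming the variables of one factor to be disjoint genuinely changes the element: the polynomial $p(y_5,y_6)q(y_1,y_3)$ being an identity is a \emph{stronger} condition than $p(y_1,y_6)q(y_1,y_3)$ being one, so ``$uv=0$'' does not transfer to a disjoint-variable statement. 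The phrase ``iterating this argument'' does not bridge this. In effect you are assuming the conclusion of Theorem~\ref{nondisjoin variables} (the version of strong verbal primeness \emph{without} the disjointness hypothesis), which in the paper is derived \emph{from} Theorem~\ref{main theorem2}, not used to prove it. Likewise, the Ore condition for $\Sigma$ is asserted but never verified; unlike central localization, it is not automatic.

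The paper sidesteps both problems at once. It takes an \emph{affine} relatively free algebra $U_G$ (finitely many variables), shows $J(U_G)=0$ via Braun--Kemer--Razmyslov together with strong verbal primeness, and localizes only at $Z(U_G)_e \setminus \{0\}$ --- a central commutative set, so no Ore condition is needed. That this set contains no zero divisors is a Zariski-density/open-set argument, not an appeal to strong verbal primeness with shared variables. One then lands in a finite-dimensional $G$-simple $K$-algebra by Karasik's graded Posner theorem, and the delicate part --- showing this algebra has no homogeneous zero divisors --- is done via the \emph{path property} (Proposition~\ref{equivalence strongly and 1+2+3'}), i.e.\ via the structure data $(H,\alpha,\mathfrak{g})$ of Theorem~\ref{main theorem1}, plus a further Zariski-topology and multihomogenization reduction. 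Notice that your proposal invokes the presentation $(H,\alpha,\mathfrak{g})$ from Theorem~\ref{main theorem1} at the outset but then never uses it; in the actual proof, that structure (through the path property) is precisely what replaces the disjoint-variable hypothesis you do not have. I would recommend reworking the hard direction around central $e$-localization and the path property rather than around an Ore localization at all homogeneous elements.
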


Combining Theorems \ref{main theorem1} and \ref{main theorem2} we obtain the main corollary of the paper.

\begin{cor}
Suppose $F$ is an algebraically closed field of characteristic zero and let $A$ be a finite dimensional $G$-graded $F$-algebra. Then $A$ admits a $G$-graded $K$-form division algebra $B$, some field $K$, if and only if $A$ is $G$-simple whose presentation $P_{A}=(H, \alpha, \mathfrak{g}=(g_1,\ldots,g_k))$ satisfies conditions $(1)-(3)$ in Theorem \ref{main theorem1}.
\end{cor}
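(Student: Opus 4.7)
The plan is to obtain the corollary as a direct consequence of chaining Theorems \ref{main theorem1} and \ref{main theorem2}, with the only real work being to verify the two hypotheses required by Theorem \ref{main theorem2} (namely that $A$ is $G$-simple and that $Z(A)_{e} = F1_{A}$), because the division algebra form statement in Theorem \ref{main theorem2} is conditional on these.

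For the forward direction, assume $A$ admits a $G$-graded $K$-form division algebra $B$ with $Z(B)_{e} = K1_{B}$. First I would argue that $A$ is $G$-simple: since every nonzero homogeneous element of $B$ is invertible, any nonzero $G$-graded two-sided ideal of $B$ is all of $B$, so $B$ is $G$-simple, and a standard base-change argument (using that $Z(B)_{e} = K$, i.e.\ $B$ is $G$-central over $K$) gives that $B_{E}$ is $G$-simple; hence $A \cong_{G} B_{E}$ over $E$ is $G$-simple, and $G$-simplicity descends from the extension $E/F$ so $A$ is $G$-simple as an $F$-algebra. A parallel descent argument yields $Z(A)_{e}=F1_{A}$. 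Next, as recalled in the paragraph preceding Theorem \ref{main theorem2}, the existence of a division algebra $K$-form implies $A$ is $G$-graded strongly verbally prime. Being finite dimensional, $\mathrm{Id}_{G}(A)$ contains a Capelli polynomial $c_{n}$, so Theorem \ref{main theorem1} applies and tells us $\mathrm{Id}_{G}(A) = \mathrm{Id}_{G}(A')$ for some finite dimensional $G$-simple algebra $A'$ whose presentation satisfies conditions $(1)$--$(3)$. Finally, invoking the fact (following from Theorem \ref{classification of G-graded prime} and the uniqueness of the Bahturin--Sehgal--Zaicev presentation up to the evident equivalence) that over $F$ two finite dimensional $G$-simple $F$-algebras with the same $T$-ideal of $G$-graded identities are $G$-graded isomorphic, we conclude $A \cong_{G} A'$, so the presentation $P_{A}$ of $A$ itself satisfies $(1)$--$(3)$.

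For the reverse direction, assume $A$ is $G$-simple with presentation $P_{A}=(H,\alpha,\mathfrak{g})$ satisfying $(1)$--$(3)$. Since $A$ is finite dimensional, $\mathrm{Id}_{G}(A)$ contains a Capelli polynomial, and the ``if'' direction of Theorem \ref{main theorem1} then gives that $\mathrm{Id}_{G}(A)$ is strongly verbally prime, i.e., $A$ is strongly verbally prime. A direct computation with the BSZ normal form, using normality of $H$ and the equal distribution of $H$-cosets in $\mathfrak{g}$, shows that $Z(A)_{e}=F1_{A}$ (the centralizer condition forces central homogeneous elements of degree $e$ to come from $F\cdot u_{e}$). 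Thus the hypotheses of Theorem \ref{main theorem2} are fulfilled, and that theorem produces the required $G$-graded $K$-form division algebra $B$ of $A$.

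The only substantive obstacle in this scheme is the bookkeeping for the forward direction: lifting ``PI-equivalent to some $G$-simple $A'$ with a good presentation'' to ``the presentation of $A$ itself is good.'' This is precisely where one needs that finite dimensional $G$-simple $F$-algebras are classified up to $G$-graded isomorphism by their $T$-ideal of graded identities, a fact one can quote from the existing literature on $G$-graded PI theory (or derive by combining Theorem \ref{classification of G-graded prime} with the structure of BSZ presentations). Once this is in hand, the rest is formal combination of the two main theorems.
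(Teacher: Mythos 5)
Your proof is correct and follows the route the paper itself intends (the paper states the corollary as an immediate combination of Theorems \ref{main theorem1} and \ref{main theorem2}, without writing out the details). You correctly identify that the only non-formal step is lifting ``PI-equivalent to a $G$-simple $A'$ with a good presentation'' to ``$A$ itself is $G$-simple'', and the descent argument you give for this is a legitimate way to close that gap; the citation of Koshlukov--Zaicev/Aljadeff--Haile on $G$-simple algebras being determined up to $G$-graded isomorphism by their $T$-ideals is exactly what is needed. Two small remarks. First, you should make explicit the reduction that lets you assume $Z(B)_e=K1_B$: the statement of the corollary only asserts the existence of \emph{some} $G$-graded $K$-form division algebra, and the base-change argument producing $G$-simplicity of $B_E$ really does require the $G$-central condition $Z(B)_e=K$; this is harmless because one may always replace $K$ by the field $Z(B)_e$ (which is a field, being a finite-dimensional commutative domain in characteristic zero), but it should be said. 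Second, in the reverse direction you attribute the equality $Z(A)_e=F1_A$ to conditions (1)--(3); in fact this equality is automatic for any finite dimensional $G$-simple algebra over the algebraically closed field $F$ (with the connected grading the paper always assumes): writing $A\cong F^cH\otimes M_m(F)$, one has $Z(A)=Z(F^cH)\otimes F I_m$, and an element $(\sum_h\alpha_h u_h)\otimes I_m$ can be $e$-homogeneous only if $g_i^{-1}hg_i=e$ for all $i$ and all $h$ with $\alpha_h\neq 0$, forcing $h=e$. This does not affect the validity of your argument, but the stated dependence on (1)--(3) there is spurious.
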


Once again, it is immediate that if $A$ is finite dimensional and has a $G$-graded $K$-form division algebra then $A$ is strongly verbally prime (i.e. if $f$ and $g$ are $G$-homogeneous polynomials on disjoint sets of variables such that $fg \in Id_{G}(A)$ then either $f$ or $g$ is in $Id_{G}(A)$). However, it turns out that in that case one can remove the condition on the disjointness of sets of the variables. In fact we have

\begin{thm} \label{nondisjoin variables}
Notation as above. Let $A$ be a $G-$graded $F$-algebra (possibly infinite dimensional). Then $A$ is $G-$PI equivalent to a finite dimensional $G-$division algebra over some field $K$ if and only if for any two $G$-homogeneous polynomials $f$ and $g$ such that $fg \in Id_{G}(A)$ then either $f \in Id_{G}(A)$ or $g \in Id_{G}(A)$. Consequently, any of these conditions holds if and only if $A$ is $G-$strongly verbally prime and $G-$PI equivalent to a finite dimensional algebra.

\end{thm}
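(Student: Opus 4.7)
The plan is to prove the two implications separately and then deduce the ``consequently'' clause. The forward direction (PI equivalence to a graded division algebra implies the non-disjoint condition) is a generic-substitution argument; the reverse direction is a reduction through Karasik's classification of $G$-graded prime $T$-ideals, followed by applications of Theorems \ref{main theorem1} and \ref{main theorem2}. Throughout I would work under the paper's running assumption that $A$ is ungraded PI.

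For the forward direction, suppose $A$ is $G$-graded PI equivalent to a finite dimensional $G$-graded division algebra $B$ over some field $K \supseteq F$. Fix a homogeneous $K$-basis $\{u_s\}$ of $B$; for each variable $x$ of $G$-degree $g$ appearing in $f$ or $g$ introduce commuting indeterminates $\lambda_{x,s}$ (one per basis element of $B_g$), and set $\xi_x = \sum_s \lambda_{x,s} u_s$. The resulting graded evaluation map from the free $G$-graded algebra $F\langle X \rangle$ to $B \otimes_K K[\lambda]$ has kernel exactly $Id_G(B)$. The key point I would establish is that $B \otimes_K K[\lambda]$ is a graded domain: picking an invertible $u_g \in B_g$ for each $g \in G$ gives $(B \otimes_K K[\lambda])_g = B_e[\lambda] \cdot u_g$, and $B_e[\lambda]$ is a genuine domain as a polynomial ring over the ungraded division algebra $B_e$. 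A short computation, using that conjugation by $u_g$ preserves $B_e$ and that the structure constants $u_g u_h = c(g,h) u_{gh}$ lie in $B_e^{\times}$, shows that every nonzero homogeneous element of $B \otimes_K K[\lambda]$ is regular in the whole graded ring. Consequently $f(\vec{\xi})\, g(\vec{\xi}) = 0$ forces $f(\vec{\xi}) = 0$ or $g(\vec{\xi}) = 0$, which translates to $f \in Id_G(A)$ or $g \in Id_G(A)$.

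For the reverse direction, the non-disjoint condition immediately yields that $Id_G(A)$ is $G$-graded prime: given graded ideals $J_1, J_2$ with $J_1 J_2 \subseteq Id_G(A)$ and $J_1 \not\subseteq Id_G(A)$, any homogeneous $f \in J_1 \setminus Id_G(A)$ forces every homogeneous generator of $J_2$ into $Id_G(A)$. Karasik's Theorem \ref{classification of G-graded prime} then supplies a finite dimensional $G$-simple $F$-algebra $C$ which is $G$-graded PI equivalent to $A$; the standard argument that $Z(C)_e$ is a field that is also a finite extension of the algebraically closed $F$ forces $Z(C)_e = F$. Restricting the hypothesis to polynomials in disjoint sets of variables shows $C$ is $G$-strongly verbally prime, so by Theorem \ref{main theorem1} its presentation $(H, \alpha, \mathfrak{g})$ satisfies conditions $(1)$--$(3)$. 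Theorem \ref{main theorem2} then produces a finite dimensional $G$-graded division algebra $B$ over some field $K \supseteq F$ which is a $K$-form of $C$; since PI identities in characteristic zero are insensitive to base change along a field extension containing $F$, one has $Id_G(C) = Id_G(B)$, and thus $A$ is $G$-graded PI equivalent to $B$. The ``consequently'' clause is now immediate: (i) trivially implies $A$ is PI equivalent to a finite dimensional algebra and is strongly verbally prime (the disjoint-variable specialization of the non-disjoint condition), while conversely a strongly verbally prime $A$ that is PI equivalent to a finite dimensional algebra is routed through Theorems \ref{main theorem1} and \ref{main theorem2} in the very same way.

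The principal obstacle is the generic-evaluation step in the forward direction: verifying that $B \otimes_K K[\lambda]$ remains a graded domain for an \emph{arbitrary} finite dimensional $G$-graded division algebra $B$---in particular when $B_e$ is a nontrivial central division algebra over $K$---is precisely where the full graded-division structure of $B$, rather than its mere graded primeness, is genuinely used.
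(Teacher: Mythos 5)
Your proposal is correct, and its overall structure matches the paper's argument closely. The reverse direction is identical: extract $G$-primeness of $Id_G(A)$ from the non-disjoint condition, invoke Karasik's graded Posner theorem (Theorem \ref{classification of G-graded prime}) to replace $A$ by a finite dimensional $G$-simple $C$, note that the disjoint-variable specialization gives strong verbal primeness, and then route through Theorems \ref{main theorem1} and \ref{main theorem2} to obtain the $G$-graded division algebra form; your remark that $Id_G$ is insensitive to scalar extension in characteristic zero correctly closes the last gap, and your observation that $Z(C)_e$ must equal $F$ addresses the hypothesis of Theorem \ref{main theorem2} that the paper leaves implicit. In the forward direction the paper and you prove the same fact by two phrasings of the same idea: the paper takes $\mathfrak{A}_p$ open nonempty and $\mathfrak{B}_q$ proper closed in affine space over the infinite field $K$ and cites irreducibility, while you pass to the generic substitution $\xi_x = \sum_s \lambda_{x,s} u_s$ in $B \otimes_K K[\lambda]$ and prove directly that every nonzero homogeneous element there is regular (using $(B\otimes_K K[\lambda])_g = B_e[\lambda]\,u_g$, the Ore-domain property of $B_e[\lambda]$, and invertibility of the structure constants). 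The generic-element route is a bit more self-contained --- it avoids the geometric language and makes explicit why the base field's being infinite is what matters --- but both are exploiting exactly the same mechanism (homogeneous nonzerodivisors plus genericity), so I would classify this as the same proof with a cosmetic change in the genericity lemma rather than a genuinely different route.
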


Here is the outline of the paper. In \secref{Preliminaries} we introduce
some basic results and notation in graded PI theory. As mentioned earlier, Theorem \ref{classification of G-graded verbally prime} appears in \cite{BerBergen} although there is some confusion with the different definitions of verbally primeness. For completeness of this article we devote a short section (\secref{Verbally-prime}) for its proof.

In \secref{strongly-verbally-prime} we discuss the strongly verbally prime condition and show the conditions of Theorem \ref{main theorem1} are necessary. In the same section we discuss also their sufficiency however the proof of Theorem \ref{main theorem1} is completed only in \secref{division-algebras} after we discuss $G$-graded division algebra twisted forms and prove Theorems \ref{main theorem2} and \ref{nondisjoin variables}.

\section{\label{sec:Preliminaries}Preliminaries}

\subsection{$G-$graded PI}

We fix for the rest of the article a finite group $G$ and an algebraically
closed field of characteristic zero $F$. We are considering algebras $A$
having a $G-$graded structure, that is $A=\oplus_{g\in G}A_{g}$,
where $A_{g}$ is an $F-$vector space contained in $A$, and $A_{g}A_{h}\subseteq A_{gh}$
for every $g,h\in G$. An element in the set $\cup_{g\in G}A_{g}$
is said to be \emph{homogenous}. Sometimes we will say that $a\in A_{g}$
is an element of \emph{degree} $g$ and denote $\mbox{deg}a=g$.

A $G-$graded polynomial is an element of the noncommutative free
algebra $F\left\langle X_{G}\right\rangle$ generated by the variables
of $X_{G}=\{x_{i,g}|i\in\mathbb{N},\, g\in G\}$. It is clear that
$F\left\langle X_{G}\right\rangle $ is itself $G-$graded, where
$F\left\langle X_{G}\right\rangle _{g}$ is the span of all monomials
$x_{i_{1},g_{1}}\cdots x_{i_{n},g_{n}}$, where $g_{1}\cdots g_{n}=g$.

For elements in $F\left\langle X_{G}\right\rangle$ we adopt the following terminology. A $G$-graded polynomial $p$ is said to be $G$-\textit{homogeneous} if it belongs to $F\left\langle X_{G}\right\rangle_{g}$ for some $g \in G$. Moreover, we say $p$ is \textit{multihomogeneous } if any variable $x_{i_g}$ appears the same number of times in each monomial of $p$. More restrictive, we say $p$ is \textit{multilinear} if it is multihomogeneous and any variable $x_{i_g}$ appears at most once in each monomial of $p$. Note that unless $G$ is abelian, multihomogeneous or multilinear polynomials are not necessarily $G$-homogeneous.

We say that a $G-$graded polynomial $f(x_{i_{1},g_{1}},\ldots,x_{i_{n},g_{n}})$
is a $G-$graded identity of a $G-$graded algebra $A$ if $f(a_{1},\ldots,a_{n})=0$
for all $a_{1}\in A_{g_{1}},\ldots,a_{n}\in A_{g_{n}}$. The set of all
such polynomials is denoted by $Id_{G}(A)$ and it is evident that
it is an ideal of $F\left\langle X_{G}\right\rangle $ which is $G$-graded. In fact it
is a $G-$graded T-ideal, that is, an ideal which is closed under $G-$graded
endomorphisms of $F\left\langle X_{G}\right\rangle $.

It is often convenient to view any ungraded polynomial as $G-$graded
via the embedding
\[
F\left\langle X\right\rangle \to F\left\langle X_{G}\right\rangle ,
\]
where $x_{i}\to\sum_{g\in G}x_{i,g}$. This identification respects
the $G-$graded identities in the sense that if $f\in Id(A)$ (the
$T-$ideal of ungraded identities of $A$) then $f\in Id_{G}(A)$.

As in the ungraded case, the set of $G-$homogeneous multilinear identities
of $A$
\[
\left\{ \sum_{\sigma\in \Lambda_{n,g}\subseteq S_{n}}\alpha_{\sigma}x_{i_{\sigma(1)},g_{\sigma(1)}}\cdots x_{i_{\sigma(n)},g_{\sigma(n)}}\in Id_{G}(A)|\,g \in G, \alpha_{\sigma}\in F,\,i_{1},\ldots,i_{n}\mbox{ are all distinct}\right\}
\]
T ($G-$)generates $Id_{G}(A)$. Here, $\Lambda_{n,g}$ denotes the set of permutations which yield monomials of $G$-degree $g$.

Let $f\in F\left\langle X_{G}\right\rangle $, $A$ a $G-$graded
algebra and $S\subseteq\cup_{g\in G}A_{g}$. We write $f(S)$ for
the subset of $A$ consisting of all graded evaluations of $f$
on elements of $S$. More generally, if $S \subseteq A$ (i.e. not necessarily homogeneous elements) we set $f(S):= f(S\cap (\cup_{g\in G}A_{g}))$.
Note that if $f$ is $G$-graded multilinear then $f$ is a graded identity of $A$ if and only if $f(S)=0$ whenever $S$ contains a basis of $A$ consisting of homogeneous elements.

\subsection{Grassmann envelope}

Let $V$ be a countably generated $F-$vector space. Denote by $T(V)$ the unital
tensor $F-$algebra of $V$. That is
\[
T(V)=F\oplus\bigoplus_{k=1}^{\infty}V^{\otimes k}
\]
with the obvious addition and multiplication. The unital Grassmann algebra
$E$ is defined to be $E=T(V)/\langle T_{0}(V)^{2}\rangle$, where $\langle S \rangle$ is the ideal generated by $S$ and

\[
T_{0}(V)=\bigoplus_{k=1}^{\infty}V^{\otimes k}.
\]
 In other words, if $e_{1},e_{2},\ldots$ is a basis of $V$, we may
see $E$ as the unital $F-$algebra generated by $1, e_{1},e_{2},...$
and satisfies the relations $e_{i}e_{j}=-e_{j}e_{i}$ (so $e_{i}^{2}=0$).
In fact, the elements $a_{i_{1},..,i_{k}}:=e_{i_{1}}\cdots e_{i_{k}}$, where $k=0,1,...$
and $i_{1}<i_{2}<\cdots<i_{k}\in\mathbb{N}$, form a basis of $E$ (for
$k=0$ we just mean $a=1$).

It is easy to see that $E$ is $C_{2}=\mathbb{Z}/2\mathbb{Z}-$graded:
$E=E_{0}\oplus E_{1}$, where $E_{0}$ is spanned by elements $a_{i_{1},\ldots,i_{k}}$
where $k$ is even and $E_{1}$ is spanned by elements $a_{i_{1},\ldots,i_{k}}$
where $k$ is odd. Moreover, it is easy to check that $E_{0}$ is the center of
$E$.

Suppose that $A$ is a $G_{2}=C_{2}\times G-$graded algebra. We denote
by $E(A)$, \emph{the Grassmann envelope of $A$, }
\[
E(A)=\bigoplus_{g\in G}\left(A_{(0,g)}\otimes E_{0}\right)\oplus\left(A_{(1,g)}\otimes E_{1}\right).
\]
which may be viewed as a $G$-graded
algebra and also as a $G_{2}-$graded algebra.

A basic property of the Grassmann envelope is the following (proof is omitted).
\begin{prop}
\label{prop:basic grassmann}$Id_{G_{2}}(A)\subseteq Id_{G_{2}}(B)\Longleftrightarrow Id_{G_{2}}(E(A))\subseteq Id_{G_{2}}(E(B)).$
\end{prop}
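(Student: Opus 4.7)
The plan is to reduce the statement to the level of multilinear polynomials and construct an explicit involutive bijection $f\mapsto \tilde f$ between multilinear $G_2$-graded polynomials which identifies $Id_{G_2}(A)$ with $Id_{G_2}(E(A))$, and similarly for $B$. Since $\mathrm{char}(F)=0$, every $G_2$-graded T-ideal is determined by its multilinear polynomials, so inclusions of T-ideals can be checked multilinearly; the bijection will then translate the inclusion $Id_{G_2}(A)\subseteq Id_{G_2}(B)$ into $Id_{G_2}(E(A))\subseteq Id_{G_2}(E(B))$ and vice versa.

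To define the correspondence, take a multilinear $G_2$-graded polynomial $f=\sum_{\sigma\in S_n}\alpha_\sigma x_{\sigma(1)}\cdots x_{\sigma(n)}$ with $x_i$ of $G_2$-degree $(c_i,g_i)$, let $J=\{i:c_i=1\}$ be the set of odd positions, and set
\[
\tilde f \;=\; \sum_{\sigma\in S_n}\alpha_\sigma\,\mathrm{sgn}(\sigma|_J)\, x_{\sigma(1)}\cdots x_{\sigma(n)},
\]
where $\sigma|_J$ is the induced permutation of $J$. The map is clearly involutive and preserves multidegree. The heart of the argument is the computational identity: for any homogeneous $a_i\in A_{(c_i,g_i)}$ and $w_i\in E_{c_i}$,
\[
f(a_1\otimes w_1,\ldots,a_n\otimes w_n)\;=\;\tilde f(a_1,\ldots,a_n)\otimes (w_1\cdots w_n),
\]
because re-ordering $w_{\sigma(1)}\cdots w_{\sigma(n)}$ into the canonical order $w_1\cdots w_n$ in the supercommutative algebra $E$ contributes exactly the sign $\mathrm{sgn}(\sigma|_J)$ (the even-degree $w_i$'s lie in the center $E_0$ and do not contribute to the inversion count).

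From this identity both directions follow. If $\tilde f\in Id_{G_2}(A)$, the right-hand side vanishes for every choice of $a_i$ and $w_i$, and by multilinearity this covers all graded substitutions into $E(A)$, so $f\in Id_{G_2}(E(A))$. Conversely, if $f\in Id_{G_2}(E(A))$, I would specialize by taking $w_i=1$ for $i\notin J$ and distinct Grassmann generators $w_i=e_{k_i}$ for $i\in J$, so that $w_1\cdots w_n$ is a nonzero basis vector of $E$; then $\tilde f(a_1,\ldots,a_n)\otimes(w_1\cdots w_n)=0$ in $A\otimes_F E$ forces $\tilde f(a_1,\ldots,a_n)=0$, and since the $a_i$ are arbitrary, $\tilde f\in Id_{G_2}(A)$. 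Applying the bijection to $A$ and $B$ simultaneously, a multilinear $\tilde f\in Id_{G_2}(E(A))$ belongs to $Id_{G_2}(E(B))$ iff the corresponding $f\in Id_{G_2}(A)$ belongs to $Id_{G_2}(B)$, and taking T-ideal closures yields the biconditional. The only delicate point is the sign bookkeeping when permuting homogeneous elements of $E$, but this is a routine inversion-count and poses no real obstacle.
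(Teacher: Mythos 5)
The paper omits its proof of Proposition~\ref{prop:basic grassmann}, but your argument is the standard one and is correct: the sign twist $f\mapsto\tilde f$ with $\operatorname{sgn}(\sigma|_J)$ does give the identity $f(a_1\otimes w_1,\ldots,a_n\otimes w_n)=\tilde f(a_1,\ldots,a_n)\otimes(w_1\cdots w_n)$, the map is involutive, and in characteristic zero both $T$-ideal inclusions are detected multilinearly, so the biconditional follows. The specialization $w_i=1$ for even indices and distinct generators $e_{k_i}$ for odd indices correctly yields a nonzero $w_1\cdots w_n$ in $E$, which makes the converse direction go through.
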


\section{\label{sec:Verbally-prime} $G-$verbally prime $T$-ideals}

In this section we prove Theorem \ref{classification of G-graded verbally prime}.

In order to proceed, let us recall the $G$-graded analog of Kemer's Representability theorem (see \cite{AB}). As usual, all algebras are defined over $F$, an algebraically closed field of characteristic zero.

\begin{thm*}[A]\label{Representability G-graded} The following hold.

\begin{enumerate}
\item Let $W$ be an affine $G-$graded algebra which is PI as an ungraded algebra. Then $W$ is $G-$graded PI equivalent to a finite
dimensional $G-$graded algebra $A$.

\item Let $W$
be a $G-$graded algebra which is PI as an ungraded algebra, then $W$ is $G-$graded PI equivalent
to the Grassmann envelope of an affine $G_{2}=C_{2}\times G-$graded
algebra $W_{0}$. Thus, by part $1$, $W$ is $G-$graded PI equivalent to the Grassmann
envelope of a finite dimensional $G_{2}-$graded algebra $A$.
\end{enumerate}

\end{thm*}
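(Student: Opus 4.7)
The plan is to adapt Kemer's classical representability machinery to the $G$-graded setting, handling part (1) directly and reducing part (2) to it via the Grassmann envelope. For part (1), since $W$ is affine and ungraded PI, ordinary Kemer theory supplies an ungraded Capelli identity $c_n \in Id(W)$; viewed in $F\langle X_G\rangle$ via $x_i \mapsto \sum_{g\in G} x_{i,g}$ this places a graded Capelli-type identity inside $\Gamma := Id_G(W)$. One then introduces a $G$-graded Kemer index of $\Gamma$: a pair $(\beta,\gamma)$ extracted from the multilinear $G$-graded cocharacter, recording the maximal size of a collection of alternating sets of $G$-homogeneous variables (one system per $G$-degree) surviving modulo $\Gamma$, together with the number of such folds one may superimpose. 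The heart of the proof is to establish the $G$-graded analogues of Kemer's two principal lemmas: the index is realized by explicit \emph{$G$-graded Kemer polynomials}, and any finite dimensional $G$-graded algebra whose own index matches admits Kemer polynomials outside its identities. Combined with a graded Phoenix property --- any $T$-ideal strictly containing $\Gamma$ has strictly smaller index --- these lemmas drive a Noetherian-style induction: one assembles a candidate finite dimensional $G$-graded algebra $A$ of matching index, with semisimple part built from Bahturin--Sehgal--Zaicev $G$-simple blocks and a radical padding chosen to realise $\gamma$, and shows by index comparison that $Id_G(A) = \Gamma$.

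For part (2) one produces a $G_2 = C_2 \times G$-graded affine algebra $W_0$ with $E(W_0)$ that is $G$-graded PI equivalent to $W$, and then applies part (1) to $W_0$. The construction mimics Kemer's supertrick: replace each $G$-homogeneous variable by an even/odd pair that tracks the $C_2$-component surviving after tensoring with $E_0$ or $E_1$, and read off from $E(W_0)_g = (W_{0,(0,g)}\otimes E_0) \oplus (W_{0,(1,g)}\otimes E_1)$ the dictionary between $G$-graded multilinear identities of $E(W_0)$ and $G_2$-graded multilinear identities of $W_0$, as codified in Proposition \ref{prop:basic grassmann}. The ungraded PI hypothesis on $W$ bounds the relevant Capelli degree and thereby allows one to cut $W_0$ down to a finitely generated --- hence affine --- $G_2$-graded subalgebra whose multilinear $G_2$-graded identities already yield $Id_G(W)$ via the envelope; part (1) then replaces $W_0$ by a finite dimensional $G_2$-graded model $A$, and $E(A)$ represents $W$ as required.

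The main obstacle, as in Kemer's original work, lies in the two graded Kemer lemmas underpinning part (1). In the ungraded case they rest on the combinatorics of alternating polynomials on a Wedderburn decomposition into semisimple plus Jacobson radical. In the $G$-graded case each semisimple block is a twisted matrix algebra $F^{c}H \otimes M_m(F)$ carrying its Bahturin--Sehgal--Zaicev grading, so one must track how $G$-graded alternations distribute across the matrix units $u_h \otimes e_{i,j}$, how they interact with the multi-indexed nilpotency filtration on the radical, and how refined Kemer polynomials can still be manufactured when the alternations are constrained to fixed $G$-degrees. A secondary technical hurdle is ensuring that the $G_2$-graded $W_0$ built in part (2) is genuinely affine and that Proposition \ref{prop:basic grassmann} is applied so as to recover $G$-graded PI equivalence with $W$, rather than merely the finer $G_2$-graded information.
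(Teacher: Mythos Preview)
The paper does not prove this theorem at all: it is stated as a known result and attributed to \cite{AB} (Aljadeff and Kanel-Belov, \emph{Representability and Specht problem for $G$-graded algebras}), with the text ``let us recall the $G$-graded analog of Kemer's Representability theorem (see \cite{AB})''. So there is no proof in the present paper to compare your proposal against.

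That said, your outline is a fair high-level sketch of the strategy actually carried out in \cite{AB}: a $G$-graded Kemer index, graded Kemer polynomials and the two Kemer lemmas, a Phoenix-type property driving the induction, and the Grassmann supertrick to reduce the nonaffine case to the affine one. You are right that the serious work lies in the graded Kemer lemmas, and your description of the difficulties (tracking alternations across the Bahturin--Sehgal--Zaicev blocks $F^{c}H\otimes M_m(F)$ and through the radical filtration) is accurate. But what you have written is a plan, not a proof: each of the ingredients you name --- the well-definedness of the graded index, the existence of graded Kemer polynomials realising it, the Phoenix property, and the termination of the induction --- requires a substantial argument, and in \cite{AB} these occupy the bulk of a long paper. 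For the purposes of the present paper nothing more than the citation is expected.
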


We start with the proof of Theorem \ref{classification of G-graded verbally prime}.

\begin{lem}
\label{product_verbally_prime} Let $A=A_{1}\times\cdots\times A_{k}$ be a direct product of $G-$graded algebras. If $A$ is verbally prime then $Id_{G}(A)=Id_{G}(A_{i_{0}})$
for some $i_{0}$. Moreover, if $A=A_{1}\times\cdots\times A_{k}$
is a product of $G_{2}-$graded algebras such that $E(A)$ is $G-$verbally
prime, then $Id_{G}(E(A))=Id_{G}(E(A_{i_{0}}))$ for some $i_{0}$.
\end{lem}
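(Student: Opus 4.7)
The plan is a direct unwinding of the verbally prime definition in terms of $T$-ideals of identities, using the elementary fact that the $T$-ideal of a direct product is the intersection of the individual $T$-ideals.

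First I would reduce the first claim to the case $k=2$ by induction on $k$: having shown the two-factor case, one applies it to $A = A_1 \times (A_2 \times \cdots \times A_k)$ to conclude that $Id_G(A)$ equals either $Id_G(A_1)$ or $Id_G(A_2 \times \cdots \times A_k)$, and then iterates. For $k=2$, set $\Gamma = Id_G(A_1 \times A_2)$. A coordinatewise-evaluation argument shows that $\Gamma = Id_G(A_1)\cap Id_G(A_2)$, so in particular
\[
Id_G(A_1)\cdot Id_G(A_2)\;\subseteq\;Id_G(A_1)\cap Id_G(A_2)\;=\;\Gamma.
\]
Since $Id_G(A_1)$ and $Id_G(A_2)$ are $G$-graded $T$-ideals and $\Gamma$ is $G$-verbally prime by hypothesis, one of them, say $Id_G(A_{i_0})$, is contained in $\Gamma$. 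Combined with the reverse containment $\Gamma \subseteq Id_G(A_{i_0})$ coming from the intersection, this gives $\Gamma = Id_G(A_{i_0})$, which is the desired statement.

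For the second assertion I would use the fact that the Grassmann envelope distributes over finite direct products of $G_2$-graded algebras, namely that as $G$-graded algebras
\[
E(A_1\times\cdots\times A_k)\;\cong\;E(A_1)\times\cdots\times E(A_k),
\]
because the Grassmann envelope is defined componentwise on the $G_2$-homogeneous components and tensoring with $E_0$ or $E_1$ commutes with direct sums. Each $E(A_i)$ is a $G$-graded $F$-algebra, so $E(A)$ is a direct product of $G$-graded algebras, and by hypothesis $E(A)$ is $G$-verbally prime. Applying the first part of the lemma to this product yields $Id_G(E(A)) = Id_G(E(A_{i_0}))$ for some $i_0$.

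There is no real obstacle: the only subtleties are the two structural observations that $Id_G$ of a product is the intersection of the $Id_G$'s, which is immediate from coordinatewise evaluation, and that $E$ commutes with finite direct products, which is immediate from the definition of $E(\,\cdot\,)$ as a componentwise sum of tensor products with $E_0$ or $E_1$. Everything else is a single application of the verbally prime condition together with induction on the number of factors.
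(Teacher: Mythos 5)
Your proof is correct and follows essentially the same route as the paper's: both reduce the statement to the observation that $Id_G(A_1)\cdot Id_G(A')\subseteq Id_G(A_1)\cap Id_G(A')=Id_G(A)$ for $A'=A_2\times\cdots\times A_k$, apply verbal primeness to force one of the $T$-ideals inside $Id_G(A)$, and induct, while the second part is read off from $E(A_1\times\cdots\times A_k)\cong E(A_1)\times\cdots\times E(A_k)$. The only difference is that you first isolate the $k=2$ case before inducting, which is a purely organizational variant of the same argument.
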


\begin{proof}
If $Id_{G}(A_{1})$ is contained in $Id_{G}(A)$, then $Id_{G}(A)=Id_{G}(A_{1})$
and we are done. Moreover, if $Id_{G}(A'=A_{2}\times\cdots\times A_{k})\subseteq Id_{G}(A)$,
then $Id_{G}(A')=Id_{G}(A)$ and so if the theorem holds for $A'$
it surely holds for $A$. Therefore, by induction we may assume that $k>1$ and $Id_{G}(A_{1}),Id_{G}(A')\nsubseteq Id_{G}(A)$ and $Id_{G}(A_{1})Id_{G}(A') \subseteq Id_{G}(A_{1})\cap Id_{G}(A')=Id_{G}(A)$. This contradicts $A$ being verbally prime, so $k=1$ as desired.

The second part follows from the first part and from the equality
\[
E(A)=E(A_{1})\times\cdots\times E(A_{k}).
\]
\end{proof}

The following definition is nonstandard. It is used only in this section.

\begin{defn}
A finite dimensional algebra $A$ over $F$ is said to be is $ [G:G]-$ \emph{minimal} (or just $G$-minimal) if it is $G$-graded and for any finite dimensional $G$-graded algebra $B$ we have

\[
Id_{G}(A)=Id_{G}(B)\Longrightarrow\mbox{dim}_{F}A\leq\mbox{dim}_{F}B.
\]

We say that a finite dimensional algebra $A$ is $[G_{2}:G]-$ \emph{minimal} if it is $G_{2}$-graded and for any finite dimensional $G_{2}$-graded algebra $B$ we have

\[
Id_{G}(E(A))=Id_{G}(E(B))\Longrightarrow\mbox{dim}_{F}A\leq\mbox{dim}_{F}B.
\]
\end{defn}

Clearly, any finite dimensional $G$-graded algebra is $G$-PI equivalent to some $[G:G]$-minimal algebra. Similarly, the Grassmann envelope $E(A)$ of any finite dimensional $G_{2}-$graded algebra is $G$-graded PI equivalent to $E(B)$ for some $[G_{2}:G]-$minimal algebra $B$. It is therefore sufficient to characterize $T$-ideals by minimal algebras.

\begin{lem}
\label{minimal_verbally_prime}The following holds.

\begin{enumerate}

\item

If $A$ is $G-$minimal and $A$
is $G-$verbally prime, then $A$ is semisimple.

\item
If $A$ is $[G_{2}:G]$-minimal and $E(A)$
is $G-$verbally prime, then $A$ is semisimple.

\end{enumerate}
\end{lem}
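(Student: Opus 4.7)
The plan is to prove both parts uniformly, using the graded Wedderburn--Malcev decomposition together with the $T$-ideal formulation of verbal primeness. Write $A = A_{ss} \oplus J$ where $A_{ss}$ is a maximal $G$-graded (respectively $G_2$-graded) semisimple subalgebra and $J = J(A)$ is the Jacobson radical; both summands are homogeneous with respect to the grading on $A$, since $F$ has characteristic zero and $G$ is finite. Fix $s$ with $J^s = 0$. In case (2), pass to the Grassmann envelope and observe that $E(J)$ is a $G$-graded ideal of $E(A)$ with $E(J)^s = 0$, and $E(A)/E(J) \cong E(A_{ss})$ as $G$-graded algebras.

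Let $\Gamma$ denote $Id_G(A)$ in part (1) and $Id_G(E(A))$ in part (2), and let $I$ denote the $T$-ideal $Id_G(A_{ss})$ or $Id_G(E(A_{ss}))$ correspondingly. The surjective $G$-graded homomorphism onto the semisimple quotient gives $\Gamma \subseteq I$. Conversely, any $G$-graded evaluation of $f \in I$ on $A$ (resp.\ $E(A)$) lies in $J$ (resp.\ $E(J)$), because its image in the quotient vanishes. Consequently a product of $s$ such evaluations is zero, so $I^s \subseteq \Gamma$. The product of two $G$-graded $T$-ideals is again a $G$-graded $T$-ideal, so the $G$-verbally prime hypothesis, applied iteratively to the factorizations $I \cdot I^{k-1} \subseteq \Gamma$ for $k = s, s-1, \dots, 2$, forces $I \subseteq \Gamma$. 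Combined with the reverse inclusion we get $I = \Gamma$.

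Thus $Id_G(A) = Id_G(A_{ss})$ in part (1) and $Id_G(E(A)) = Id_G(E(A_{ss}))$ in part (2). Invoking $[G:G]$-minimality in the first case and $[G_2:G]$-minimality in the second yields $\dim_F A \leq \dim_F A_{ss}$, forcing $J = 0$, so $A$ is semisimple. The bookkeeping to watch is: the graded Wedderburn--Malcev decomposition giving a homogeneous $A_{ss}$ (needed so that $Id_G(A_{ss})$ is meaningful and that the quotient map above is $G$-graded); the compatibility of the Grassmann envelope with that decomposition in part (2); and the fact that products of $T$-ideals are $T$-ideals, which legitimates the iterated application of verbal primeness. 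None of these presents a genuine obstacle beyond careful verification.
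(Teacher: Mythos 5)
Your proof is correct. It shares the paper's skeleton — the Jacobson radical $J$ is a homogeneous ideal, the semisimple quotient $A_{ss}$ has $Id_{G}(A)\subseteq Id_{G}(A_{ss})$ by minimality, evaluations of identities of $A_{ss}$ on $A$ land in $J$, and nilpotency of $J$ (resp.\ $E(J)$) is then played against minimality — but the step where verbal primeness is invoked is handled differently and, in fact, more cleanly. The paper takes a single multilinear $p\in Id_{G}(A_{ss})\setminus Id_{G}(A)$, observes $p(A)\subseteq J$, and then claims the existence of an exponent $m$ for which $p(X)^{m-1}\notin Id_{G}(A)$ while $p(X)^{m-1}z_{g}p(Y)\in Id_{G}(A)$ for all $g$, so that Lemma~\ref{not_verbally_prime} (a polynomial reformulation of verbal primeness, proved only afterward) can be applied. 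You instead stay at the level of $T$-ideals: you set $I=Id_{G}(A_{ss})$, show $I^{s}\subseteq\Gamma$ directly from $J^{s}=0$ (resp.\ $E(J)^{s}=0$), and then peel off factors $I\cdot I^{k-1}\subseteq\Gamma$ using the $T$-ideal definition of $G$-verbal primeness. This buys two things. First, you avoid the forward reference to Lemma~\ref{not_verbally_prime}. Second, you sidestep a delicate point in the paper's choice of $m$: in the graded setting, $p(X)^{k}$ being an identity does not automatically force $p(X)^{k-1}z_{g}p(Y)$ to be one (this is precisely the gap between verbally prime and strongly verbally prime), so the existence of an $m$ simultaneously making $p(X)^{m-1}$ a nonidentity and $p(X)^{m-1}z_{g}p(Y)$ an identity requires a more careful choice of witness polynomial (one living in $\langle p\rangle_{T}^{m-1}$ with $z$-variables interleaved, not literally $p(X)^{m-1}$). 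Your argument through $I^{s}\subseteq\Gamma$ packages that bookkeeping away. Two small points worth noting: you do not actually need the graded Wedderburn--Malcev splitting $A=A_{ss}\oplus J$ — knowing that $J$ is a homogeneous ideal and working with the quotient $A/J$ suffices, since your inclusion $f(a_{1},\dots,a_{n})\in J$ is proved via the quotient map; and in part (2) it is worth spelling out that $E(A_{ss})$ is a subalgebra of $E(A)$ with $E(A)=E(A_{ss})\oplus E(J)$ as $G$-graded vector spaces, so the $[G_{2}:G]$-minimality comparison $\dim_{F}A\le\dim_{F}A_{ss}$ applies as you claim.
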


\begin{proof}
We show the Jacobson radical $J$ of $A$ is zero. Since $J$ is $G$-graded (resp. $G_{2}-$graded)(see \cite{Cohen-Montgomery}), the algebra $A_{ss}=A/J$ is semisimple $G$-graded (resp. $G_{2}-$graded). Suppose $J\neq 0$. Then, by minimality of $A$, we have $Id_{G}(A)\varsubsetneq Id_{G}(A_{ss})$ (resp. $Id_{G}(E(A))\varsubsetneq Id_{G}(E(A_{ss}))$). Let $p$ be any multilinear polynomial in $Id_{G}(A_{ss})\setminus Id_{G}(A)$ (resp.
$Id_{G}(E(A_{ss}))\setminus Id_{G}(E(A))$). If we consider evaluations
on the spanning set $A_{ss}\cup J$ (resp. $E(A_{ss})\cup E(J)$), we see that these evaluations are nonzero only if at least one of its variables is evaluated on an element of $J$ (resp. $E(J)$). Thus, $p(A)\subseteq J$ (resp. $p(E(A))\subseteq E(J)$). Since $J$ (resp. $E(J)$) is a nilpotent ideal, there is some integer $2 \leq m \leq$ the
nilpotency index of $J$ (resp. $E(J)$), such that $\underset{m-1}{\underbrace{p(X)p(X)\cdots p(X)}}\notin Id_{G}(A)$ (resp. $Id_{G}(E(A))$)  but $\underset{m-1}{\underbrace{p(X)p(X)\cdots p(X)}}z_{g}p(Y)\in Id_{G}(A)$ (resp. $Id_{G}(E(A))$).

The following lemma shows that $A$ (resp. $E(A)$) is not verbally prime. Contradiction.
\end{proof}

\begin{lem}
\label{not_verbally_prime}A $G-$graded algebra $A$ is \textbf{not}
verbally prime if and only if the following holds: There are two $G-$graded
polynomials $p(X),q(Y)$ defined on disjoint sets of variables which are not in $Id_{G}(A)$, but $pz_{g}q\in Id_{G}(A)=\Gamma$
for every $g\in G$. Equivalently, $A$ is $G$-graded verbally prime if and only if for any two polynomials $p$ and $q$ defined on disjoint sets of variables, nonidentities of $A$, the product $pz_{g}q$ is not a $G$-graded identity for some $g \in G$. Here $z_g$ is an extra variable, $g$-homogeneous, not in $X \cup Y$.
\end{lem}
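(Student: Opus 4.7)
The plan is to translate directly between the $T$-ideal and polynomial formulations of verbal primeness. The key mechanism is that the auxiliary variable $z_g$, together with the disjointness of the variable sets, allows one to evaluate the three blocks of $pz_g q$ independently in $A$.

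For the forward direction, suppose $A$ is not $G$-graded verbally prime: there exist $G$-graded $T$-ideals $I, J$ of $F\langle X_G \rangle$ with $IJ \subseteq \Gamma := Id_G(A)$ but $I, J \not\subseteq \Gamma$. Pick $p \in I \setminus \Gamma$ and $q \in J \setminus \Gamma$. Since $I$, $J$, and $\Gamma$ are all $T$-ideals, hence closed under graded endomorphisms of $F\langle X_G \rangle$, and in particular under relabelings of variables, we may assume without loss of generality that the variable sets of $p$, $q$, and $z_g$ are pairwise disjoint, without affecting membership in $I$, $J$, or $\Gamma$. Because $I$ is a two-sided ideal, $pz_g \in I$, and therefore $pz_g q \in IJ \subseteq \Gamma$ for every $g \in G$, as required.

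For the backward direction, assume $p(X), q(Y) \notin \Gamma$ lie on disjoint variable sets, both also disjoint from $z_g$, and $pz_g q \in \Gamma$ for every $g \in G$. The disjointness of the variable sets enables independent substitutions into the three blocks, so the identity $pz_g q \in \Gamma$ translates into the algebraic relation $p(A) \cdot A_g \cdot q(A) = 0$; sweeping $g$ over $G$ yields $p(A) \cdot A \cdot q(A) = 0$. Now let $I := \langle p \rangle_T$ and $J := \langle q \rangle_T$ denote the $G$-graded $T$-ideals generated by $p$ and $q$. Clearly $I, J \not\subseteq \Gamma$. A typical element of $IJ$ is a sum of words $u\, \phi(p)\, v\, u'\, \psi(q)\, v'$, with $u, v, u', v' \in F\langle X_G \rangle$ and $\phi, \psi$ graded endomorphisms of $F\langle X_G \rangle$. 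Under any graded substitution $\theta$ into $A$ one has $\theta(\phi(p)) \in p(A)$ and $\theta(\psi(q)) \in q(A)$, while $\theta(vu') \in A$; hence the central triple $\theta(\phi(p)) \cdot \theta(vu') \cdot \theta(\psi(q))$ lies in $p(A) \cdot A \cdot q(A) = 0$. Consequently the whole word vanishes under $\theta$, so $IJ \subseteq \Gamma$, witnessing that $A$ is not $G$-graded verbally prime.

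The only delicate step is the translation $(\forall g \in G)\ pz_g q \in \Gamma \Longrightarrow p(A) \cdot A \cdot q(A) = 0$, which uses both the disjointness of variable sets (to realize the independent ranges $p(A)$, $A_g$, $q(A)$) and the requirement for \emph{every} $g \in G$ (to sweep out all of $A = \bigoplus_g A_g$ in the middle block). This is precisely why the auxiliary variable $z_g$ cannot be dropped from the polynomial criterion, and equivalently why in the $G$-graded setting the polynomial version without a middle variable, i.e.\ strong verbal primeness, is strictly stronger than the $T$-ideal version, as emphasized in the introduction.
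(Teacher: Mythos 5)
Your proof is correct and takes essentially the same route as the paper's. The forward direction (not verbally prime $\Rightarrow$ such $p,q$ exist) is word-for-word the paper's argument: pick $p\in I\setminus\Gamma$, $q\in J\setminus\Gamma$, rename variables, and use $pz_g\in I$. For the converse, the paper works inside the free algebra, observing that every element of $\langle p\rangle_T\langle q\rangle_T$ lies in the $T$-ideal generated by $\{pz_gq:g\in G\}$ (because the disjointness of $X$, $Y$, $\{z_g\}$ lets one build a single graded endomorphism sending $x\mapsto\phi(x)$, $y\mapsto\psi(y)$, $z_g\mapsto$ middle factor); you instead pass through $A$ by first deducing $p(A)\,A\,q(A)=0$ and then verifying that every evaluation of a typical element $u\,\phi(p)\,v\,u'\,\psi(q)\,v'$ of $IJ$ lands in $p(A)\,A\,q(A)$. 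These are the same observation read on opposite sides of the correspondence between $T$-ideals and identity sets, and both hinge on exactly the disjointness-plus-sweeping-$g$ mechanism you highlight.
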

\begin{proof}
Suppose $A$ is verbally prime and take two polynomials $p(X),q(Y)$,
with disjoint sets of variables, such that $pz_{g}q\in\Gamma$ for
every $g\in G$. We show $p$ or $q$ is in $\Gamma$. Since every element of $\left\langle p\right\rangle _{T}\left\langle q\right\rangle _{T}$
is $T-$generated by $pz_{g}q$, where $g$ runs over the elements
of $G$, we get that $\left\langle p\right\rangle _{T}\left\langle q\right\rangle _{T}\subseteq\Gamma$ and the result follows by definition of verbally prime.

Now suppose that $A$ is not verbally prime and let $I,J$ be two $G-$graded
$T-$ideals not contained in $\Gamma$ such that $IJ\subseteq\Gamma$.
Choose $p\in I-\Gamma$ and $q\in J-\Gamma$. Since $pz_{g}\in I$, for every $g\in G$, we have (by our assumption) that
$pz_{g}q\in\Gamma$. But clearly, we may assume
that $p$ is defined on a set of variables disjoint from the ones of
$q$ and $z_{g}$ and so we are done.
\end{proof}

We can now prove Theorem \ref{classification of G-graded verbally prime}.

\begin{proof}

Suppose $\Gamma$ contains a Capelli polynomial (or equivalently, $\Gamma$ is the $T$-ideal of an affine $G$-graded algebra which is ungraded PI (see\cite{AB})). Then by part $1$ of Theorem A, $\Gamma=Id_{G}(A)$, where $A$ is finite dimensional $G-$graded algebra.
Assume also that $A$ is $G-$minimal. By Lemma \ref{minimal_verbally_prime}
we obtain that $A$ is semisimple. Since every semisimple $G-$graded
algebra is a direct product of $G-$simple algebras, invoking Lemma \ref{product_verbally_prime}
we can replace $A$ by a $G-$simple algebra. In the general case, by part 2 of Theorem A, $\Gamma=Id_{G}(E(A))$, where $A$ is finite dimensional $G_{2}-$graded algebra. We assume that $A$ is $[G_{2}:G]-$minimal and continue the proof as above.
This proves one direction of each one of the statements in the theorem.

For the converse, suppose $A$ is a finite dimensional $G-$graded simple algebra.
Since the evaluation of any $T-$ideal $I$ on $A$ is a $G-$graded
ideal $\tilde{I}$ of $A$, in case $I\nsubseteq\Gamma$
we get $\tilde{I}=A$. So if $I,J\nsubseteq\Gamma$ we have $\widetilde{IJ}=\tilde{I}\tilde{J}=A$,
so $IJ\nsubseteq\Gamma$. Thus, $A$ is verbally prime.

Finally, we prove that if $A$
is a $G_{2}-$simple finite dimensional algebra, then $E(A)$ is $G-$verbally prime. Suppose $I$ is a
$G-$graded $T-$ideal not contained in $\Gamma=Id_{G}(E(A))$. Denote

\[
I_{A}=\left\{ a\in A|b\otimes a=f(\tilde{x}_{1},\ldots,\tilde{x}_{n}),\, b\neq0,f\in I \subseteq  F\langle X_{G} \rangle \right\}
\]
Here, $f$ is multilinear and $G$-homogeneous in $F\langle X_{G} \rangle$ and $\tilde{x}_{i} \in E(A)$ is of the form $e_{i_1}e_{i_2}\cdots e_{i_k} \otimes a_i$.

Note that $Span_{F}(I_{A})$ is a nonzero $G_{2}-$graded ideal of $A$. Thus $Span_{F}(I_{A})=A$. Hence, for any two $G-$graded $T-$ideals $I,J$ not contained in $\Gamma$, we have $Span_{F}(I_{A})Span_{F}(J_{A})=A$ and hence $I(A)J(A) \neq 0$ which says that $IJ$ is not contained in $\Gamma$. This completes the classification of $G-$graded verbally prime ideals.

\end{proof}

\section{\label{sec:strongly-verbally-prime}strongly verbally prime ideals}

As mentioned in the introduction, we shall give a complete characterization of $G$-graded strongly verbally prime ideals $\Gamma$ in case $\Gamma$ is the $T$-ideal of an affine $G$-graded algebra which is ungraded PI.

The first step is to show

\begin{cor}
\label{verbally_vs_strongly}If $\Gamma$ is $G$-graded strongly verbally
prime, then it is $G$-graded verbally prime.\end{cor}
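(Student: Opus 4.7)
The plan is a short proof by contradiction based on Lemma \ref{not_verbally_prime}. Assume $\Gamma$ is $G$-graded strongly verbally prime and, toward a contradiction, that $\Gamma$ is not $G$-graded verbally prime. Then Lemma \ref{not_verbally_prime} produces polynomials $p(X)$ and $q(Y)$ on disjoint sets of variables, neither lying in $\Gamma$, such that $p\,z_g\,q \in \Gamma$ for every $g \in G$.

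First I would reduce to the case in which both $p$ and $q$ are $G$-homogeneous. Tracing through the proof of Lemma \ref{not_verbally_prime}, these polynomials arose as $p \in I \setminus \Gamma$ and $q \in J \setminus \Gamma$ for $G$-graded $T$-ideals $I, J$ with $IJ \subseteq \Gamma$. Since $I$, $J$, and $\Gamma$ are all $G$-graded, decomposing into $G$-homogeneous summands and selecting a component outside $\Gamma$ in each case lets me replace $p$ and $q$ by $G$-homogeneous representatives; the key containment $p z_g q \in IJ \subseteq \Gamma$ is preserved.

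Now specialize to $g = e$ (choosing $z_e$ outside $X \cup Y$) and view $p z_e q \in \Gamma$ as the product of the two $G$-homogeneous polynomials $p z_e$ (of the same $G$-degree as $p$, since $z_e$ has degree $e$) and $q$, which are defined on the disjoint variable sets $X \cup \{z_e\}$ and $Y$ respectively. The strongly verbally prime hypothesis forces $p z_e \in \Gamma$ or $q \in \Gamma$; the latter is excluded because $q \notin \Gamma$, so $p z_e \in \Gamma$. Finally, apply the $G$-graded endomorphism of $F\langle X_G\rangle$ that sends $z_e \mapsto 1$ and fixes every other variable (legitimate because $1$ and $z_e$ both have degree $e$); since $\Gamma$ is a $T$-ideal, this yields $p \cdot 1 = p \in \Gamma$, contradicting $p \notin \Gamma$.

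The only mildly subtle step is the reduction to $G$-homogeneous $p$ and $q$; everything else is immediate once we specialize $g = e$ and use the substitution $z_e \mapsto 1$ to erase the middle variable.
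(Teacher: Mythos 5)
Your proof follows the paper's overall strategy (contradiction via Lemma~\ref{not_verbally_prime}) but differs in the closing step. You explicitly carry out a reduction that the paper leaves implicit: passing from $p \in I \setminus \Gamma$, $q \in J \setminus \Gamma$ to $G$-homogeneous representatives, using that $I$, $J$, $\Gamma$ are all $G$-graded. This is a genuine and welcome clarification, since the strongly verbally prime condition in the paper's Definition applies only to $G$-homogeneous polynomials, and without it neither your application of the hypothesis nor the paper's would typecheck.

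Where the two arguments diverge is what to do once $pz_e \in \Gamma$ is established. You apply the graded endomorphism $z_e \mapsto 1$ (all other variables fixed) to conclude $p \in \Gamma$. That shortcut is legitimate only under the convention that $F\langle X_G\rangle$ is the free \emph{unital} algebra, so that $1 \in F\langle X_G\rangle_e$ and the map is a bona fide $G$-graded endomorphism; but the paper defines $F\langle X_G\rangle_g$ as the span of monomials $x_{i_1,g_1}\cdots x_{i_n,g_n}$, which under the usual non-unital reading of the free algebra excludes the empty word, and then $z_e\mapsto 1$ is not an available substitution for a $T$-ideal. The paper avoids this by applying the strongly verbally prime hypothesis a \emph{second} time, to $p\cdot z_g\in\Gamma$ for \emph{every} $g\in G$ (not just $g=e$), concluding $z_g\in\Gamma$ for all $g$; a $G$-graded $T$-ideal containing every variable $z_g$ contains every monomial (substitute the monomial for the appropriate $z_g$) and hence equals $F\langle X_G\rangle$, which is the desired contradiction. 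Note that getting only $z_e\in\Gamma$ would not suffice for this, so if you wish to make your argument convention-independent you should keep all $g\in G$ in play and replace the substitution $z_e\mapsto 1$ by a second application of the hypothesis followed by the observation just made.
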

\begin{proof}
Suppose not and consider $p,q$ as in the second part of the proof of  Lemma \ref{not_verbally_prime}.
Since $q\notin\Gamma$, $pz_{g}\in\Gamma$ for every $g\in G$. Similarly,
since $p\notin\Gamma$, $z_{g}\in\Gamma$ for all $g\in G$. We get
that $\Gamma$ is equal to the whole free algebra and
so $p$ (and $q$) are in $\Gamma$. Contradiction.
\end{proof}
In case $G$ is trivial it is not hard to prove (see \cite{GiaZai}, Lemma
1.3.10) that a vector space $T-$generated
by a single polynomial $p$ is in fact a Lie $T-$ideal, so any
one-sided $T-$ideal must be a two-sided $T-$ideal. Hence, $pz_{e}$
is in the left $T-$ideal $\left\langle p\right\rangle_{T}$, so $pz_{e}q\in\left\langle pq\right\rangle _{T}$.
In other words $pz_{e}q$ is an identity if $pq$ is such. Thus,
in the ungraded case the opposite of Corollary \ref{verbally_vs_strongly}
holds.

In contrast to the ungraded case, we shall see that if the group $G$ is nontrivial then there exist algebras
which are verbally prime but not strongly verbally prime. For instance, it follows easily from our classification that the algebra of $3 \times 3-$matrices over $F$ with the elementary $\mathbb{Z}_{2}-$grading given by the vector $(e,e,\sigma)$ where $\mathbb{Z}_{2} = \langle \sigma \rangle$ is verbally prime but not strongly verbally prime.

Before embarking into the proof of Theorem \ref{main theorem1} we need some more preparations. Let $\Gamma$ be a $G$-graded strongly verbally prime $T$-ideal which contains a Capelli polynomial.
By Kemer's theory for $G$-graded algebras (see \cite{AB}) we have that $\Gamma$ is the $T$-ideal of identities of a $G$-graded finite dimensional algebra $A$ over $F$. Furthermore, by Theorem \ref{classification of G-graded verbally prime} and the fact that strongly verbally primeness implies verbally primeness (Corollary \ref{verbally_vs_strongly}) we may assume the algebra $A$ is $G$-simple. Our goal in the first part of the proof is to find necessary conditions on presentations of \textit{the} $G$-simple algebra $A$ so that $Id_{G}(A)$ is $G$-graded strongly verbally prime.

The word \textit{the} in the last sentence needs justification. It is trivial to construct nonisomorphic finite dimensional algebras which are PI equivalent. On the other hand, PI equivalent finite dimensional simple algebras over an algebraically closed field $F$ are $F-$isomorphic by e.g. the Amitsur-Levitzki theorem. This is true also for finite dimensional $G$-simple algebras over $F$. Indeed, it was proved by Koshlukov and Zaicev that if finite dimensional $G$-simple algebras $A$ and $B$, $G$ abelian, are $G$-graded PI equivalent, then they are $G$-graded isomorphic \cite{Koshlukov-Zaicev}. This result was extended to arbitrary groups by the first named author of this article and Haile \cite{AljHaile}. In particular, note that if $A$ is $G$ -graded verbally prime, the minimal algebra $B$ with $Id_{G}(B)=Id_{G}(A)$ is uniquely determined up to a $G$-graded isomorphism. Indeed, as noted in the proof of Theorem \ref{classification of G-graded verbally prime} any $G$-minimal algebra and $G$-verbally prime is necessary $G$-simple. We record this in the following

\begin{prop} $G$-minimal, $G$-graded verbally prime algebras $B$ and $B'$ are $G$-graded PI equivalent if and only if they are $G$-graded isomorphic.
\end{prop}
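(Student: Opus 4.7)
The proof plan is to reduce the proposition, via an application of the two lemmas just proved (Lemmas \ref{minimal_verbally_prime} and \ref{product_verbally_prime}), to the rigidity statement that $G$-graded PI equivalent finite dimensional $G$-simple $F$-algebras are $G$-graded isomorphic, which is the content of the cited work of Koshlukov--Zaicev \cite{Koshlukov-Zaicev} (for $G$ abelian) and Aljadeff--Haile \cite{AljHaile} (for arbitrary finite $G$). The ``if'' direction is immediate, since $G$-graded isomorphic algebras share the same $T$-ideal of $G$-graded identities and are therefore $G$-graded PI equivalent.

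For the nontrivial ``only if'' direction, I would assume that $B$ and $B'$ are $G$-minimal, $G$-graded verbally prime, and satisfy $Id_{G}(B)=Id_{G}(B')$. The first step is to apply Lemma \ref{minimal_verbally_prime}(1) separately to $B$ and to $B'$ to conclude that each of them is semisimple as a $G$-graded algebra. A semisimple $G$-graded algebra decomposes as a direct product of $G$-simple algebras, so I may write $B=B_{1}\times\cdots\times B_{k}$ with each $B_{i}$ being $G$-simple. Now Lemma \ref{product_verbally_prime}, combined with the hypothesis that $B$ is $G$-graded verbally prime, yields $Id_{G}(B)=Id_{G}(B_{i_{0}})$ for some index $i_{0}$. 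The $G$-minimality of $B$ then forces $\dim_{F}B\leq \dim_{F}B_{i_{0}}$, which is possible only when $k=1$, so $B=B_{i_{0}}$ is itself $G$-simple. The identical argument applied to $B'$ shows that $B'$ is $G$-simple.

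At this point $B$ and $B'$ are finite dimensional $G$-simple $F$-algebras with $Id_{G}(B)=Id_{G}(B')$. Invoking the theorem of Aljadeff--Haile \cite{AljHaile} (or, in the abelian case, Koshlukov--Zaicev \cite{Koshlukov-Zaicev}), which asserts that two finite dimensional $G$-simple $F$-algebras over an algebraically closed field of characteristic zero are $G$-graded isomorphic whenever they are $G$-graded PI equivalent, produces the required $G$-graded isomorphism $B\cong B'$.

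The main obstacle, if one wished to avoid the citation, would be reconstructing the Bahturin--Sehgal--Zaicev presentation triple $(H,[c],\mathfrak{g})$ of a $G$-simple algebra intrinsically from its $T$-ideal of $G$-graded identities; this is precisely the content of the cited papers, and here it is used as a black box, so the proof is in effect a clean assembly of the lemmas proved above together with the rigidity theorem.
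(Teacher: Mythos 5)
Your proof is correct and follows the paper's approach exactly: you show, via Lemma~\ref{minimal_verbally_prime} and Lemma~\ref{product_verbally_prime} together with $G$-minimality, that a $G$-minimal $G$-verbally prime algebra is $G$-simple, and then invoke the rigidity theorem of Koshlukov--Zaicev and Aljadeff--Haile for $G$-simple algebras over $F$. The paper states the $G$-simplicity step only as an observation (``as noted in the proof of Theorem~\ref{classification of G-graded verbally prime}''); you merely make explicit the dimension count forcing the direct product to have a single factor, which the paper leaves implicit.
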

As mentioned in the introduction, the $G$-graded structure of a finite dimensional $G$-simple algebra is presented by a triple $(H,[c],\mathfrak{g})$.
We recall that a $G$-graded algebra may admit different presentations, that is algebras with presentations $(H,[c],\mathfrak{g})$ and
$(H',[c'],\mathfrak{g}')$ (different parameters) may be $G$-graded isomorphic. In fact, in \cite{AljHaile} the authors
present three type of \textit{moves} on presentations, $M_1, M_2, M_3$ (see below) such that

\begin{enumerate}
\item

 $M_{i}((H,[c],\mathfrak{g}))\underset{G}{\backsimeq}(H,[c],\mathfrak{g})$, $i=1,2,3$
($G-$isomorphic)

\item

$(H,[c],\mathfrak{g}))\underset{G}{\backsimeq}(H',[c'],\mathfrak{g}')$ if and only if $(H',[c'],\mathfrak{g}')$ may be obtained from $(H,[c],\mathfrak{g})$ by composing a sequence of such moves.

\end{enumerate}

For the reader convenience let us recall the moves $M_1, M_2, M_3$.

$M_1:$ $\mathfrak{g}'=(g_{\sigma(1)},\ldots,g_{\sigma(m)})$, where $\sigma\in S_{m}$,
and $H=H',c=c'$.

$M_2:$ $\mathfrak{g}'=(h_{1}g_{1},\ldots,h_{m}g_{m})$, where $h_{1},\ldots,h_{m}\in H$,
and $H=H',c=c'$.

$M_3:$ For $g\in G$. $\mathfrak{g}'=(gg_{1},\ldots,gg_{m}),\, H'=gHg^{-1}$
and $c'\in Z^{2}(H',F^{*})$ is defined by $c'(gh_{1}g^{-1},gh_{2}g^{-1})=c(h_{1},h_{2})$.

Applying these moves we obtain easily the following normalization on the presentation of a $G$-simple algebra (see also \cite{AljHaile}).

\begin{cor}
Any finite dimensional $G$-simple algebra has a presentation with
$\mathfrak{g}=(\underset{m_{1}}{\underbrace{g_{1},\ldots,g_{1}}},\ldots,\underset{m_{k}}{\underbrace{g_{k},\ldots,g_{k}}})\in G^{m}$,
where the $g_{i}$'s are different representatives of right $H-$cosets,
$0 < m_{1}\leq\cdots\leq m_{k}$ and $g_{1}=e$.

\end{cor}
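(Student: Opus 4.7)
The plan is to bring an arbitrary presentation $(H,[c],\mathfrak{g})$ of a finite dimensional $G$-simple algebra $A$ into the stated normal form by applying a finite sequence of the three moves $M_{1},M_{2},M_{3}$, each of which preserves the $G$-graded isomorphism class of $A$ by property (1) of these moves. Since the moves are available in any order, the argument is essentially a bookkeeping of what each move does to the triple $(H,[c],\mathfrak{g})$.

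First, fix once and for all a set of right coset representatives for $H$ in $G$. Every entry $g_{i}$ of $\mathfrak{g}$ can be written uniquely as $g_{i}=h_{i}r_{i}$ with $h_{i}\in H$ and $r_{i}$ the chosen representative of the coset $Hg_{i}$. A single application of $M_{2}$, with tuple $(h_{1}^{-1},\ldots,h_{m}^{-1})$, replaces every entry of $\mathfrak{g}$ by its chosen coset representative. At this stage every coordinate of $\mathfrak{g}$ lies in our fixed set of representatives, and distinct representatives correspond to distinct right $H$-cosets.

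Next, apply $M_{1}$ (a pure permutation of coordinates) to cluster equal entries, so that
\[
\mathfrak{g}=(\underbrace{g_{1},\ldots,g_{1}}_{n_{1}},\ldots,\underbrace{g_{k},\ldots,g_{k}}_{n_{k}}),
\]
where $g_{1},\ldots,g_{k}$ are the distinct representatives actually appearing and $n_{i}\ge 1$ is the multiplicity of $g_{i}$. A further block-permutation via $M_{1}$ orders the blocks so that $n_{1}\leq n_{2}\leq\cdots\leq n_{k}$. Finally, apply $M_{3}$ with $g=g_{1}^{-1}$. This replaces $H$ by $H'=g_{1}^{-1}Hg_{1}$, transports $[c]$ to the corresponding class $[c']\in H^{2}(H',F^{*})$, and left-multiplies every entry of $\mathfrak{g}$ by $g_{1}^{-1}$, producing
\[
\mathfrak{g}'=(\underbrace{e,\ldots,e}_{n_{1}},\underbrace{g_{1}^{-1}g_{2},\ldots,g_{1}^{-1}g_{2}}_{n_{2}},\ldots,\underbrace{g_{1}^{-1}g_{k},\ldots,g_{1}^{-1}g_{k}}_{n_{k}}).
\]
This is the required normalized form.

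There is no substantial obstacle; the only subtlety is to verify that after Step $M_{3}$ the new entries $e,g_{1}^{-1}g_{2},\ldots,g_{1}^{-1}g_{k}$ remain pairwise distinct and are right coset representatives of the conjugated subgroup $H'$. This follows from the bijection $Hx\mapsto H' g_{1}^{-1}x$ between right cosets of $H$ and right cosets of $H'$, which preserves distinctness and carries each block into itself (in particular preserving both the block structure and the nondecreasing ordering of multiplicities). Hence the final triple $(H',[c'],\mathfrak{g}')$ still presents $A$ and satisfies all three normalization requirements: distinct coset representatives, nondecreasing multiplicities, and $g_{1}=e$.
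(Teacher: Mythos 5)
Your proof is correct and takes essentially the same approach as the paper, which simply asserts the normalization follows by ``applying these moves'' (citing \cite{AljHaile}) without writing out the bookkeeping. You have supplied exactly the details the paper omits — the order $M_{2}$, then $M_{1}$, then $M_{3}$, and the check that left-translation by $g_{1}^{-1}$ carries right $H$-cosets bijectively to right $H'$-cosets while preserving the block multiplicities.
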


Let $P_{A}= (H,[c],\mathfrak{g})$ be a presentation of a $G$-simple algebra $A$. Suppose the presentation is normalized as above.

\bigskip

We denote by $e_{r,s}^{(i,j)}$ the element
\[
e_{m_{0}+\cdots m_{i-1}+r,m_{0}+\cdots m_{j-1}+s}\in M_{m}(F)
\]
where $1\leq i,j \leq k$, $m_{0}=0$ and $r\in\{1,\ldots,m_{i}\}$, $s\in\{1,\ldots,m_{j})$.
In case $i=j$ we slightly simplify the notation and replace $e_{r,s}^{(i,i)}$ by $ e_{r,s}^{(i)}$.

For a fixed $i$ we refer to the linear span of $\{u_{e}\otimes e_{r,s}^{(i)}\}_{r,s}$
as the $i$th $e-$block of $A$. Note that the linear span
of all $e-$blocks $(i=1,\ldots,k)$ is equal to $A_{e}$, the $e-$component
of $A$.

Suppose $A$ is $G$-graded strongly verbally prime. Notation as above.

\begin{lem}
\label{lem:-same_block_size}The following hold.
\begin{enumerate}

\item
$k=[G:H]$

\item
$m_{1}=m_{k}$ and so $m_{1}=\cdots=m_{k}$

\end{enumerate}

\end{lem}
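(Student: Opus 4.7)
My plan is to prove both statements by contradiction: assuming each fails, I construct polynomials $f,g$ on disjoint sets of graded variables such that $fg\in\text{Id}_G(A)$ while $f,g\notin\text{Id}_G(A)$, contradicting the strong verbal primeness of $A$. The first step, common to both parts, is to translate strong verbal primeness into multiplicative conditions on the homogeneous components of $A$. For distinct graded variables $x_{a_1},\ldots,x_{a_n}$, the monomial $x_{a_1}\cdots x_{a_n}$ belongs to $\text{Id}_G(A)$ iff $A_{a_1}\cdots A_{a_n}=0$. Splitting this monomial into disjoint-variable subproducts and iterating strong verbal primeness, I conclude that whenever each $A_{a_i}\neq 0$, the product $A_{a_1}\cdots A_{a_n}\neq 0$. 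Applied for $n=2$, $\text{supp}(A)$ is closed under multiplication; as a nonempty multiplicatively closed subset of the finite group $G$ it is a subgroup, and by connectedness of the grading $\text{supp}(A)=G$, i.e., $A_g\neq 0$ for all $g\in G$. Writing $L_i:=Hg_i$ and letting $G$ act on $H\backslash G$ on the right by $L\mapsto L\cdot a$, the condition $A_{a_1}\cdots A_{a_n}\neq 0$ becomes: there exist indices $i_0,\ldots,i_n\in[k]$ so that $L_{i_\ell}=L_{i_{\ell-1}}\cdot a_\ell$ for every $\ell$, i.e., a trajectory of length $n$ inside $\mathcal{L}:=\{L_1,\ldots,L_k\}$.

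For (1), suppose $\mathcal{L}\subsetneq H\backslash G$. Since the $G$-action on $H\backslash G$ is transitive and $\mathcal{L}$ is a proper subset, not every $\rho_a$ ($a \in G$) can preserve $\mathcal{L}$ (otherwise $\mathcal{L}$ would be $G$-invariant and hence equal to the full coset space). Using this I pick elements $a\in G$ and starting cosets $L\in\mathcal{L}$ with $La\notin\mathcal{L}$, and chain them so that every trajectory starting in $\mathcal{L}$ under the resulting sequence $a_1,\ldots,a_n$ leaves $\mathcal{L}$ at some step. The monomial $x_{a_1}\cdots x_{a_n}$ is then in $\text{Id}_G(A)$ while each $x_{a_i}\notin\text{Id}_G(A)$, contradicting strong verbal primeness after splitting into two disjoint subproducts.

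For (2), I use (1) so that each $\rho_a$ is a genuine permutation of $[k]$. Suppose for contradiction $m_1<m_k$. Take $f=c_{m_1^2+1}(X;Y)$, the ungraded Capelli polynomial in $e$-graded variables; by a dimension count $f$ vanishes on the blocks of size $m_1$ while being nontrivial on larger blocks, so $f\notin\text{Id}_G(A)$ and $f(A)\subseteq\bigoplus_{m_i>m_1}M_{m_i}(F)$. For $g$, disjoint from $f$, I use a sandwich
\[
g = u_{a}\cdot c_{m_1^2+1}(X';Y')\cdot v_{a^{-1}}
\]
with $u_a,v_{a^{-1}}$ graded variables of degrees $a$ and $a^{-1}$. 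Its contribution at each diagonal $e$-block $(i_0,i_0)$ comes from the inner Capelli evaluated in block $(\rho_a(i_0),\rho_a(i_0))=M_{m_{\rho_a(i_0)}}(F)$, and is nonzero only when $m_{\rho_a(i_0)}>m_1$. Choosing $a\in G$ so that $\rho_a$ sends large-block indices into small-block indices forces $g(A)\subseteq\bigoplus_{m_i=m_1}M_{m_i}(F)$. By orthogonality of the $e$-blocks inside $A_e$, $fg$ then vanishes on $A$, while $f,g\notin\text{Id}_G(A)$, contradicting strong verbal primeness.

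The principal obstacle is the combinatorial/construction step. In (1), one must show that starting from a proper subset $\mathcal{L}$ in a transitive $G$-set it is always possible to chain escape moves so that every trajectory eventually leaves $\mathcal{L}$; care is needed because escapes from different starting points interact, and the argument relies on the fact that the collection $\{\rho_a:a\in G\}$ generates the transitive $G$-action. In (2), the existence of a single element $a\in G$ with $\rho_a(\text{large})\subseteq\text{small}$ is not automatic (it requires, at a minimum, $|\text{large}|\leq|\text{small}|$), and in the general case one needs an iterated sandwich $u_{a_1}p_1u_{a_2}p_2\cdots$ whose composite routing through $H\backslash G$ — tailored to the permutation group generated by $\{\rho_a:a\in G\}$ — confines the evaluation of $g$ to the union of small blocks. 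The bookkeeping for this iterated construction is the technical heart of the proof.
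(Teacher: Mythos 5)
Your approach to part (1) is correct and differs from the paper's. The paper invokes an external nondegeneracy criterion (Lemma 3.3 of \cite{AljDavid}) to produce a multilinear monomial identity when some coset is missing from $\mathfrak{g}$. You instead derive it directly from strong verbal primeness via the observation that $\operatorname{supp}(A)$ must be all of $G$ and then via trajectories in $H\backslash G$. The ``chaining'' issue you flag is in fact a non-issue: escapes from different starting cosets do not interact, because once a trajectory leaves $\mathcal{L}$ at \emph{any} step it is dead (that summand of $A_{a_1}\cdots A_{a_n}$ is already zero), regardless of what happens later. So write $\mathcal{L}=\{L_1,\ldots,L_k\}\subsetneq H\backslash G$, pick by transitivity some $b_i\in G$ with $L_i b_i\notin\mathcal{L}$ for each $i$, and set $a_1=b_1$, $a_j=b_{j-1}^{-1}b_j$ for $j\ge 2$. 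The trajectory starting at $L_i$ reaches $L_i b_i\notin\mathcal{L}$ at step $i$, so $x_{a_1}\cdots x_{a_k}\in Id_G(A)$ while each factor has nonzero homogeneous component, and splitting the monomial contradicts strong verbal primeness. Both routes end with the same contradiction, but yours is self-contained.

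Part (2) has a genuine gap which you correctly identify but do not close, and I believe the difficulty is structural rather than bookkeeping. Your $f=c_{m_1^2+1}(X;Y)$ indeed lands in the direct sum of the large diagonal $e$-blocks. But the companion polynomial $g=u_a\,c_{m_1^2+1}(X';Y')\,v_{a^{-1}}$ requires a single $a\in G$ with $\rho_a(\{i:m_i>m_1\})\subseteq\{i:m_i=m_1\}$, which fails whenever there are more large blocks than small ones; and even when the cardinalities permit, there is no reason the finite permutation group $\rho(G)\le\operatorname{Sym}(H\backslash G)$ contains such a permutation. Your proposed repair, the ``iterated sandwich'' $u_{a_1}p_1u_{a_2}p_2\cdots$, is not constructed, and trying to do so drags you toward the paper's construction, which takes a different route: rather than looking for two polynomials landing in \emph{orthogonal diagonal} blocks, the paper builds one polynomial landing in strictly \emph{off-diagonal} blocks. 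Concretely, it alternates all $\sum_s m_s^2=\dim_F A_e$ of the $e$-variables in the snake $Z=Z_1w_1Z_2\cdots w_{k-1}Z_k$, where $Z_s$ is a product of $m_s^2$ alternated $x$'s interleaved with $e$-graded frame variables $y$, and each bridge $w_s$ is $g_s^{-1}g_{s+1}$-graded. The global alternation forces any nonzero evaluation to match $Z_s$ with a diagonal block of size exactly $m_s$, so the output lies in $e^{(i,j)}$ with $m_i=m_1$ and $m_j=m_k$; since $m_1<m_k$ and the blocks are sorted by size, this forces $i<j$, hence the span $R$ of such $e^{(i,j)}$ satisfies $R^2=0$, and $p\cdot p'$ (a disjoint copy) is an identity while $p$ is not. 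This sidesteps your routing problem entirely, and you will likely need something of this nature to make part (2) go through.
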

\begin{proof}

Let us show first all right $H$-cosets are represented in $\mathfrak{g}$. If not, it follows from Lemma $3.3$ in \cite{AljDavid} that the $G$-grading on $A$ is \textit{degenerate} (recall that a $G$-grading on an algebra $A$ is nondegenerate if the product of homogeneous components $A_{g_1}A_{g_2}\cdots A_{g_s}$ is nonzero, for any $s > 0$ and any $s$-tuple $(g_1.\ldots,g_s) \in G^{(s)}$; equivalently, the $T$-ideal of $G$-graded identities has no $G$-graded multilinear monomials). It follows that $Id_{G}(A)$ contains multilinear monomials. Since the grading is assumed to be connected, we have monomials of the form $x_{g_1}x_{g_2}\cdots x_{g_n} \in Id_{G}(A)$ where $g_i \in Supp_{G}(A)$ and hence $x_{g_i} \notin Id_{G}(A)$. Clearly, this shows that $A$ is not $G$-graded strongly verbally prime and the first part of the lemma is proved.

Next, suppose by contradiction that $m_{1}<m_{k}$ ($m_{i}$ are positive integers).

For $i=1,\dots,k$ consider the following monomial of $e$-variables

$$Z_{i}=y_{0}^{(i)}x_{1}^{(i)}y_{1}^{(i)}\cdots x_{m_{i}^{2}}^{(i)}y_{m_{i}^{2}}^{(i)}$$
and denote
$$Z=Z_{1}w_{1}Z_{2}\cdots w_{k-1}Z_{k},$$ where $w_{i}$
is a $g_{i}^{-1}g_{i+1}-$graded variable. Finally let $p=\mbox{Alt}_{X}Z$,
where $X$ is the set of all $x$ variables. We refer to the
$y$ variables as \emph{frames} and the $w$ variables as \emph{bridges}.

Let us show that $p$ is a nonidentity of $A$. It is well known that
one can find an ordering on the elementary matrices
$e_{r_{1},s_{1}},\ldots,e_{r_{m_{i}^{2}},s_{m_{i}^{2}}}$ of $M_{m_{i}}(F)$ such that
\begin{enumerate}
\item $r_{1}=s_{m_{i}^{2}}=1$.
\item $s_{t}=r_{t+1}$ for $t=1,\ldots,m_{i}^{2}-1$.
\end{enumerate}

Evaluate $x_{t}^{(i)}:=\tilde{x}_{t}^{(i)}=u_{e}\otimes e_{r_{t},s_{t}}^{(i)}$.
Notice that there is a unique evaluation of the $y_{t}^{(i)}$'s by $e$-homogeneous elements such that $\tilde{Z_i}\neq 0$, namely
$\tilde{y}_{t}^{(i)}=u_{e}\otimes e_{s_{t},s_{t}}^{(i)}$ if $t>0$ and $\tilde{y}_{0}^{(i)}= \tilde{x}_{1}^{(i)}$ if $t=0$. This yields
$\tilde{Z}_{i}=u_{e}\otimes e_{1,1}^{(i)}$. Finally, we set $\tilde{w}_{i}=u_{e}\otimes e_{1,1}^{(i,i+1)}$ and obtain $\tilde{Z}=u_{e}\otimes e_{1,1}^{(1,k)}$. We claim that any
nontrivial alternation of the $\tilde{x}$'s gives zero. Indeed,
if the alternation sends an $\tilde{x}$ corresponding to $Z_{i}$
to $\tilde{x}'$ corresponding to $Z_{j}$ ($i\neq j$), we get
zero since the frames of $Z_{j}$ are evaluated on elements of the
$j-$block and these do not match with the indices of $\tilde{x}$. On the other hand if the alternation
sends some $\tilde{x}$ of $Z_{i}$ to a different $\tilde{x}'$ of $Z_{i}$
we also get zero since $x$ and $x'$ are evaluated on different
elementary matrices and again the indices of the frames do not match. All in all, we get that
$\tilde{p}=\tilde{Z}\neq0$.

Next, we claim that $p(A)\subseteq F^{c}H\otimes R$, where $R$ is
spanned by $e_{r,s}^{(i,j)}$, where
$m_{i}=m_{1}$ and $m_{j}=m_{k}$ (in particular $i < j$).
Indeed, since $p$ is multilinear it is enough to consider only evaluations
on elements of the form $u_{h}\otimes e_{i,j}$. Consider such an
evaluation: $\tilde{x},\tilde{y},\tilde{w}$. Obviously, we may assume that $\tilde{p}\neq0$
and so without loss of generality we may assume that $\tilde{Z}\neq0$.

Note that since the $\tilde{x},\tilde{y}$ (alternating variables and frame variables) have been chosen to be homogeneous of degree $e$, all variables of each $Z_{i}$ must be evaluated on elements of the same $e-$block. Moreover, since the $x$ variables are alternating
in $p$, their evaluation must consist of linearly independent elements, thus
consists of a basis of $A_{e}$. Thus, the $x$ variables of each $Z_{i}$
are evaluated on a basis of some $e-$block of $A_{e}$ where for $i \neq j$ the evaluation is on different $e$-blocks.
It follows that each $Z_{i}$ must be evaluated on elements of an $e-$block
of dimension $m_{i}^{2}$. Consequently $Z(A)\subseteq F^{c}H\otimes R$ (evaluations of the monomial $Z$ on $A$). In order to show that $p(A)\subseteq F^{c}H\otimes R$ we proceed as follows.
Any evaluated monomial $\widetilde{Z'}$ resulting from a permutation
of the evaluated $x$ variables of $Z$ is in fact equal to
$\tilde{Z}$ after reevaluation of the $x$ variables. The previous argument
shows that $Z'(A)\subseteq F^{c}H\otimes R$ and hence also $p(A)\subseteq F^{c}H\otimes R$.

\begin{rem}
In fact we can strengthen the statement in the last part of the proof above. We claim that if the following conditions hold

\begin{enumerate}

\item

All variables are evaluated on elements of $A$ of the form $u_{h}\otimes e_{i,j}$

\item

The variables of $Z_{i}$ are homogeneous of degree $e$.

\item

$Z(A)\neq 0$
\end{enumerate}
then for any monomial $Z'$ in $p$ obtained from $Z$ by a nontrivial permutation of the $x$-variables we have $\widetilde{Z'}=0.$
Indeed, as noted above, for any nonzero evaluation of the polynomial $p$ (i.e. an evaluation of the $x$'s, the $y$'s and the $w$'s) which yields a nonzero value of the monomial $Z$, we have that the value of the $x$'s is determined by the value of the $y$'s (frames). Since $Z'$ is obtained from $Z$ by permuting nontrivially the $x$'s and keeping the $y$'s, its value $\widetilde{Z'}$ must be zero.

\end{rem}

We complete now the proof of the lemma. Since $m_1 < m_k$, we have $\left(F^{c}H\otimes R\right)^{2}=0$. It follows that $pq\in Id_{G}(A)$
where $q$ is the polynomial $p$ but with a disjoint set of variables
- contradiction to $A$ being strongly verbally prime.

\end{proof}

We let $\mathfrak{g}_{\mathbf{set}}=\{g_{1},\ldots,g_{k}\}$ be a set of representatives of the right cosets of $H$ in $G$. Denote $m_{\mathbf{block}}=m_{1}=\cdots=m_{k}$.
\begin{prop}\label{normal subgroup}
$\mathfrak{g}_{\mathbf{set}}=\{g_{1},\ldots,g_{k}\}$ normalizes $H$ $($i.e. $H$ is normal in $G$$)$.

\end{prop}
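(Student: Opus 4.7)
The plan is to prove the contrapositive. Suppose $H$ is not normal in $G$; then there exist $j\in\{2,\ldots,k\}$ and $h_0\in H$ with $g_j^{-1}h_0 g_j\notin H$, or equivalently, the map $i\mapsto j(i,h_0)$ (defined by $g_i h_0\in H g_{j(i,h_0)}$) fails to fix some $i_0$. I want to construct two $G$-homogeneous polynomials $p$ and $q$ on disjoint sets of variables, with $p,q\notin Id_{G}(A)$ but $pq\in Id_{G}(A)$; this will contradict strong verbal primeness.

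The key observation to exploit is that the commutator $[x_e,y_{h_0}]$ is not an identity of $A$: the component $A_e$ is block-diagonal, but $A_{h_0}$ contains an off-diagonal entry at block position $(i_0,j(i_0,h_0))$, and block-diagonal matrices do not commute with strictly off-diagonal ones. In the normal case, by contrast, every $A_h$ with $h\in H$ is block-diagonal, so $[x_e,y_h]\in Id_{G}(A)$, which would shut down any analogous construction.

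My plan for the polynomials is to take $p$ and $q$ as products of several commutators $[x_e,y_g]$, where $g$ ranges over a carefully chosen subset of $G$ (including our distinguished $h_0$). Each such commutator concentrates evaluations onto a restricted set of off-diagonal matrix positions, and multiplying several of them, together with the matrix-multiplication rule $e_{r,s}^{(a,b)}\cdot e_{r',s'}^{(c,d)}=\delta_{b,c}\delta_{s,r'}\,e_{r,s'}^{(a,d)}$, narrows $p(A)$ down to entries at a single block position $(r,s)$ with $r\ne s$. A disjoint-variables copy $q$ is constructed along the same lines but with its target block position $(r',s')$ chosen so that $s\ne r'$; then $p(A)\cdot q(A)=0$ in $A$. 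As an illustrative instance with $G=S_3$, $H=\{e,(12)\}$ and $m_{\mathbf{block}}=1$, the choice
\[
p=[x_e,y_{(12)}]\cdot[z_e,w_{(23)}],\qquad q=[x'_e,y'_{(12)}]\cdot[z'_e,w'_{(23)}],
\]
yields $p(A)=q(A)=F\cdot u_e\otimes e_{3,1}$, whence $p(A)q(A)=0$ while both $p$ and $q$ admit nonzero evaluations at $A$.

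The main obstacle in general is the combinatorial task of selecting commutator factors that concentrate $p(A)$ and $q(A)$ into specific mismatched matrix positions. The non-normality of $H$ is the essential input: it produces the first nonzero commutator $[x_e,y_{h_0}]$ with $h_0\in H$, and thereby breaks the block-permutation symmetry that in the normal case would force any product $p(A)q(A)$ of admissible polynomials to have nonzero image. Translating this observation into a polynomial construction for an arbitrary non-normal $H$, and verifying the three requirements (a) $G$-homogeneity, (b) $p,q\notin Id_{G}(A)$, and (c) $p(A)q(A)=0$ on all evaluations, is the technical heart of the argument.
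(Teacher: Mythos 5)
Your plan has the right starting point (contrapositive, exhibiting disjoint nonidentities $p,q$ with $pq\in Id_G(A)$), but the commutator machinery you propose contains two genuine gaps, one of which is fatal to the approach.

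\textbf{The fatal gap: the case $m_{\mathbf{block}}>1$.} By the preceding lemma the $e$-component is $A_e\cong\prod_{i=1}^{k}M_{m_{\mathbf{block}}}(F)$, and this algebra is \emph{noncommutative} whenever $m_{\mathbf{block}}\geq 2$. Consequently $[x_e,y_e]$ is already a nonidentity of $A$, and more generally $[x_e,y_g]$ has nonzero evaluations \emph{inside} a single diagonal block (take $x_e\mapsto u_e\otimes e^{(i)}_{1,2}$ and $y_g\mapsto u_{h'}\otimes e^{(i)}_{2,1}$). So the assertion that ``$A_e$ is block-diagonal, hence $[x_e,y_{h_0}]$ concentrates on off-diagonal blocks'' is false in general, and so is the diagnostic claim that in the normal case $[x_e,y_h]\in Id_G(A)$ for $h\in H$: normality makes $A_h$ block-diagonal, but block-diagonal does not mean central. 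The whole dichotomy you rely on — commutators detect exactly the off-diagonal content coming from non-normality — only holds when $m_{\mathbf{block}}=1$, and the proposition must be proved without that restriction. Some mechanism that forces the polynomial to leave the diagonal block is needed; commutators do not do this.

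\textbf{The acknowledged gap: the general construction.} Even in the $m_{\mathbf{block}}=1$ case you explicitly defer the central task — choosing the commutator factors so that $p(A)$ lands in a single off-diagonal matrix position for arbitrary non-normal $H$ — calling it ``the technical heart of the argument.'' Note that a single commutator $[x_e,y_g]$ does not concentrate on one block position; in your own $S_3$ example $[x_e,y_{(12)}]$ has image spanning \emph{two} positions, $(2,3)$ and $(3,2)$. Products of such commutators could in principle concentrate further, but there is no argument that this always succeeds, and the combinatorics of ``which off-diagonal entries survive'' depends delicately on the group. (Minor correction: in that example $p(A)$ is contained in $F\cdot u_{(12)}\otimes e_{2,1}$, not $F\cdot u_e\otimes e_{3,1}$; the latter element is not even in $A_{(123)}$, the homogeneous degree of $p$. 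Your example still works because $e_{2,1}e_{2,1}=0$, but the claimed value is wrong.)

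For comparison, the paper takes a different and more robust route. It introduces the double-coset equivalence $g_i\sim g_j\Leftrightarrow g_i^{-1}Hg_j\cap H\neq\emptyset$, observes that $H$ is normal iff every class is a singleton, and constructs an \emph{alternating} Capelli-type polynomial $p$ whose $x$-variables must, for a nonzero evaluation, sweep out \emph{all} of a basis of $A_e$ — this forces each diagonal block to be visited exactly once, irrespective of $m_{\mathbf{block}}$. ``Bridge'' variables of degree $g_i^{-1}h_ig_{i+1}$ (with $h_i\in H$ chosen so that the degree lies in $H$ precisely when $g_i\sim g_{i+1}$) then force the image of $p$ to land in block positions $(i,j)$ with the $i$-block in a smallest equivalence class and the $j$-block in a largest one. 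Since non-normality makes these sizes differ, the relevant span $R$ satisfies $R^2=0$. Alternation replaces the hoped-for ``concentration by commutators,'' and the double-coset count replaces the hoped-for ``matching of indices.'' If you want to salvage your commutator-based plan you would at minimum need to replace the commutators by alternating polynomials in $m_{\mathbf{block}}^2$ variables per block (as the paper does), after which you are essentially reproducing the paper's construction.
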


\begin{proof}
First let us define an equivalence relation $\sim$ on $\mathfrak{g}_{\mathbf{set}}$:
\[
g_{i}\sim g_{j}\mbox{ iff }g_{i}^{-1}Hg_{j}\cap H\neq\emptyset,
\]
and denote by $[g]$ the equivalence class of $g\in\mathfrak{g}_{\mathbf{set}}$. Clearly, the equivalence relation among the $g_{i}$'s determines an equivalence relation among the corresponding $e$-blocks.
Notice that the claim of the proposition holds if and only if the equivalence
classes are of cardinality $1$.

Let $\mathfrak{g}_{\sim}=\{\gamma_{1},\ldots,\gamma_{\tilde{k}}\} \subseteq \mathfrak{g}_{\mathbf{set}}$, $\tilde{k} \leq k$, be
a full set of representatives of $\sim$. Changing representatives we may write

\[
(\underset{m_{\mathbf{block}}}{\underbrace{[g_{1}],\ldots,[g_{1}]}},\ldots,[\underset{m_{\mathbf{block}}}{\underbrace{g_{k}],\ldots,[g_{k}]}})=(\underset{\tilde{m}_{1}}{\underbrace{[\gamma_{1}],\ldots,[\gamma_{1}]}},\ldots,[\underset{\tilde{m}_{\tilde{k}}}{\underbrace{\gamma_{\tilde{k}}],\ldots,[\gamma_{\tilde{k}}]}}),
\]
where $\tilde{m}_{i}=m_{\mathbf{block}}\cdot|[\gamma_{i}]|$. Here $|[\gamma_{i}]|$ denotes the cardinality of the equivalence class in $\mathfrak{g}_{\mathbf{set}}$ represented by $\gamma_{i}$. Moreover, since $e \in G$
normalizes $H$, we may assume

\begin{enumerate}
\item $[\gamma_{1}]=[e](=\{e\})$.
\item $0 < \tilde{m}_{1}\leq\cdots\leq\tilde{m}_{\tilde{k}}$.
\end{enumerate}

We make similar conventions to the ones we made before Lemma \Lemref{-same_block_size}.
Denote by $e_{r,s}^{[i,j]}$ the element $e_{\tilde{m}_{0}+\cdots\tilde{m}_{i-1}+r,\tilde{m}_{0}+\cdots\tilde{m}_{j-1}+s}\in M_{m}(F)$,
where $\tilde{m}_{0}=0$ and $r\in\{1,\ldots,\tilde{m}_{i}\}$, $s\in\{1,\ldots,\tilde{m}_{j}\}, 1\leq i,j \leq \tilde{k}$.
We say that $u_{h}\otimes e_{r,s}^{[i,j]}$ is in the $[i,j]-$block.
Moreover, we denote by $u_{h}\otimes e_{r,s}^{[i]}$ the element $u_{h}\otimes e_{r,s}^{[i,i]}=u_{h}\otimes e_{r,s}^{[\gamma_{i},\gamma_{i}]}$ and call its block the $[i]-$block.

The idea of the proof is somewhat similar to the one above. We construct a ``Capelli type'' polynomial $p(X)$, obtained by alternating some variables of a suitable $G$-graded monomial $Z$.

For $i=1,\dots,k$, consider the monomial of $e$-variables

$$Z_{i}=y_{0}^{(i)}x_{1}^{(i)}y_{1}^{(i)}\cdots x_{m_{i}^{2}}^{(i)}y_{m_{i}^{2}}^{(i)}$$ where $m_i=m_{\mathbf{block}}$
and let
$$Z=Z_{1}w_{1}Z_{2}\cdots w_{k-1}Z_{k},$$ where $w_{j}$
is a $g_{j}^{-1}h_{j}g_{j+1}-$graded variable and $h_{j} \in H$ to be chosen, $j=1,\ldots,k-1$. If $g_{j}$ and $g_{j+1}$ are nonequivalent we know that $g_{j}^{-1}h_{j}g_{j+1}$ is not in $H$ for any $h_{j}\in H$ and so we take $h_{j}=e$ for simplicity. On the other hand if $g_{j}$ and $g_{j+1}$ are equivalent we choose $h_{j}=\hat{h}_{j} \in H$ such that $g_{j}^{-1}\hat{h}_{j}g_{j+1}\in H$. Note that by our construction $w_1, w_2, \ldots, w_{\tilde{m}_{1}/m_{\mathbf{block}}-1}$ are elements in $H$, $w_{\tilde{m}_{1}/m_{\mathbf{block}}} \notin H$, then the next ${\tilde{m}_{2}/m_{\mathbf{block}}-1}$ variables $w$'s  have homogeneous degrees in $H$, followed by $w_{\tilde{m}_{1}+\tilde{m}_{2}/m_{\mathbf{block}}}$ whose homogeneous degree is not in $H$ and so on.

Let us introduce an equivalence relation on submonomials: we say that submonomials $Z_{i}$ and $Z_{j}$ are equivalent if and only if $i=j$ or they are separated by $w$'s whose homogeneous degrees is in $H$. We denote the equivalent class of $Z_{i}$ by $[Z_{i}]$. Note that we have constructed precisely $\tilde{k}$ equivalence classes. Let $p(X)$ be the polynomial obtained by alternating all $x_r$'s variables of $Z$.

\begin{lem}
The polynomial $p(X)$ is a graded nonidentity of $A$. Furthermore, evaluating the polynomial $p$ on $A$ yields elements in $F^{c}H\otimes R$, where $R$ is spanned by elements $e_{r,s}^{[i,j]}$ with $i$ and $j$ satisfying $\tilde{m}_i = \tilde{m}_{1}$ and $\tilde{m}_{j} = \tilde{m}_{\tilde{k}}$.

\end{lem}
Using the lemma, since $[F^{c}H\otimes R]^{2}=0$, by taking the product of two such polynomials as $p(X)$ with disjoint sets of variables, we see that $A$ is not strongly verbally prime and so the proof of the proposition is complete. Note that with this we have proved the necessity of conditions $1$ and $2$ in Theorem \ref{main theorem1}.

\begin{proof} (of the lemma)
By the construction the monomial $Z$ has a nonzero value on $A$ as follows. We evaluate the $x$'s in each segment $Z_{i}$ of the monomial $Z$ on $e$-elements of the form $u_{e}\otimes e_{r,s}^{[i]}$, the $y$'s (frames) on $e$-elements of the form $u_{e}\otimes e_{r,r}^{[i]}$ and finally the variables $w_{i}$ (bridges), $i=1,\ldots,k-1$, on $u_{h_i}\otimes e_{1,1}^{[i,i+1]}$ where $h_{i}=e$ if $g_{i}$ and $g_{i+1}$ are nonequivalent and $h_{i}= \hat{h}_{i}$ if $g_{i}$ and $g_{i+1}$ are equivalent. Other monomials of $p(X)$ obtained from $Z$ by alternating the $x$'s yield a zero value since the values on the $y$'s and the $x$'s do not match. This shows $p(X)$ is a nonidentity of $A$.

In order to prove the second claim of the lemma, note that by the multilinearity of $p(X)$ it is sufficient to check evaluations on basis elements of the form $u_{h}\otimes e_{r,s}^{[i,j]}$. From now on we fix an evaluation of $p(X)$ of that kind. We'll show the value is in $F^{c}H\otimes R$. Clearly, we may assume the value is nonzero and so there exists a monomial $Z$ whose value is nonzero. In that case, by the alternation of the $x$'s in $p(X)$, their values must consist of the full basis of the $e$-homogeneous component of $A$. Since the $y$'s are homogeneous of degree $e$, their value is determined uniquely by the $x$'s and the nonvanishing of the monomial $Z$. Therefore as above, any monomial $Z'$ obtained from $Z$ by a nontrivial permutation vanishes. In order to complete the proof we must show the value of the monomial $Z$ is in $F^{c}H\otimes R$. As noted above, the evaluation of the monomial $Z$ establishes a one to one correspondence between the submonomials $Z_{i}$ and $e$-blocks. Furthermore, equivalent classes of submonomials $[Z_{i}]$ must be evaluated on equivalent classes of $e$-blocks such that the sizes of the equivalence classes coincide. The proof of the lemma now follows easily and so the proof of Proposition \ref{normal subgroup} is now complete.

\end{proof}

\end{proof}

We pause for a moment and summarize what we have so far. We showed that if $\Gamma$ is a $G$-graded $T$-ideal which contains a Capelli set and is $G$-graded strongly verbally prime then it is the $T$-ideal of identities of a finite dimensional $G$-simple algebra. Of course, there may be other algebras (finite dimensional or not) whose $T$-ideal of $G$-graded identities is $\Gamma$. However, if we restrict our attention to finite dimensional $G$-simple algebras $A$ whose $Id_{G}(A)=\Gamma$, the algebra $A$ is uniquely determined up to a $G$-graded isomorphism. We know by Bahturin-Sehgal-Zaicev's theorem that any $G$-simple algebra and hence the algebra $A$, is represented by a triple $(H,[c],\mathfrak{g})$, where $H$ is a subgroup of $G$, $[c] \in H^{2}(H, F^{*})$ and $\mathfrak{g} =(g_{1},\ldots,g_{m})\in G^{m}$. We know the presentation $(H,[c],\mathfrak{g})$ of $A$ is unique up to $3$ type of ``moves'' $M_1, M_2, M_3$. We showed that the strongly verbally prime condition on $A$ implies the group $H$ is normal in $G$ and $H$ cosets of $G$ are represented equally in  $\mathfrak{g} =(g_{1},\ldots,g_{m})\in G^{m}$. In particular $m/[G:H]$ is an integer.

The above considerations lead to the following characterization of $G$-simple crossed products. We recall

\begin{defn}
A $G$-graded algebra $B$ is said to be a $G$-crossed product if $B_{g}$ has an invertible element for any $g \in G$.
\end{defn}
Note that this implies $dim_{F}(B_{g}) = dim_{F}(B_{g'})$ for any $g, g' \in G$.

\begin{prop}

Let $A$ be a $G-$simple algebra over an algebraically closed field of characteristic zero. Then $A$ is a $G$-crossed product if and only if $A$ has a presentation $(H,[c],\mathfrak{g})$ where $H$ is a subgroup of $G$ $($not necessarily normal$)$ and the right $H$ cosets of $G$ are equally represented in  $\mathfrak{g} =(g_{1},\ldots,g_{m})\in G^{m}$.
\end{prop}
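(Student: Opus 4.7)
The backbone is the Bahturin--Sehgal--Zaicev formula: writing $A = F^{c}H\otimes M_m(F)$ with presentation $\mathfrak{g}=(g_1,\dots,g_m)$, the $g$-component is
$$A_g \;=\; \mathrm{span}_F\{u_h\otimes e_{p,q}\,:\, g = g_p^{-1}h g_q,\ h\in H,\ 1\le p,q\le m\}.$$
Since $g_p^{-1}hg_q = g$ forces $h = g_p g g_q^{-1}$, the dimension of $A_g$ equals the number of pairs $(p,q)$ with $g_p g g_q^{-1}\in H$, which in terms of the coset-multiplicity function $N_x := \#\{p:Hg_p=Hx\}$ (a function on $G/H$) can be rewritten as
$$\dim_F A_g \;=\; \sum_{x\in G/H} N_x\, N_{xg^{-1}}.$$
This convolution formula is the central observation; both directions reduce to exploiting it.

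\textbf{Necessity.} Assume $A$ is a $G$-crossed product. For each $g$, a unit in $A_g$ gives an $F$-linear isomorphism $A_e\to A_g$ by left multiplication, hence $\dim A_g=\dim A_e=\sum_x N_x^2$ for every $g\in G$. Plugging into the formula, $\sum_x N_x N_{xg^{-1}} = \sum_x N_x^2$. By Cauchy--Schwarz applied to the shifted and unshifted sequences (which share the same $\ell^2$-norm, as right multiplication by $g^{-1}$ permutes $G/H$), equality forces $N_{xg^{-1}}=N_x$ for all $x$. Varying $g$, the function $N$ is constant on $G/H$, so every right coset is represented and with the same multiplicity.

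\textbf{Sufficiency.} Assume all cosets of $H$ are equally represented. Apply the normalization corollary to put $\mathfrak{g}=(\underbrace{g_1,\dots,g_1}_{m_{\mathbf{block}}},\dots,\underbrace{g_k,\dots,g_k}_{m_{\mathbf{block}}})$ with $k=[G:H]$ and $g_1,\dots,g_k$ a transversal. For any $g\in G$, right multiplication by $g$ permutes the cosets, giving a permutation $\sigma_g\in S_k$ determined by $Hg_ig = Hg_{\sigma_g(i)}$, and setting $h_i := g_i g g_{\sigma_g(i)}^{-1}\in H$. Define
$$a_g \;:=\; \sum_{i=1}^{k}\sum_{r=1}^{m_{\mathbf{block}}} u_{h_i}\otimes e^{(i,\sigma_g(i))}_{r,r}.$$
By construction each summand is homogeneous of degree $g_i^{-1}h_ig_{\sigma_g(i)} = g$, so $a_g\in A_g$. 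Viewing $a_g$ as a block matrix in $M_k(M_{m_{\mathbf{block}}}(F^cH))$, it is a monomial block matrix: its $(i,\sigma_g(i))$-block is the scalar matrix $u_{h_i}\cdot I_{m_{\mathbf{block}}}$ and all other blocks vanish. Since each $u_{h_i}$ is a unit in the twisted group algebra $F^cH$ and $\sigma_g$ is a permutation, $a_g$ is invertible in the ungraded algebra $A$; the standard argument (comparing graded components of $a_g\cdot a_g^{-1}=1$) shows the inverse lies in $A_{g^{-1}}$. Hence every $A_g$ contains a unit and $A$ is a $G$-crossed product.

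\textbf{Expected obstacle.} The only delicate point is the Cauchy--Schwarz step in the necessity direction: one must justify that the two sequences $(N_x)$ and $(N_{xg^{-1}})$ have identical $\ell^2$-norms (since $g$ acts by bijection on $G/H$) so that equality in Cauchy--Schwarz forces pointwise proportionality, and then that the constant of proportionality equals $1$ by comparing $\ell^1$-norms. Everything else (the BSZ formula, the convolution rewrite, and the construction of the invertible $a_g$) is bookkeeping against the normalized presentation.
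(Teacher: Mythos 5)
Your proof is correct and follows essentially the same route as the paper: the sufficiency direction constructs the same block-monomial homogeneous unit $\sum_i u_{h(i)}\otimes e_{i,j(i)}\otimes I_r$, and the necessity direction is a dimension count. The only cosmetic difference is in the necessity step: the paper computes a single dimension, $\dim_F A_e = \sum_i r_i^2$, and notes that unequal multiplicities force this to strictly exceed $\dim_F A/|G|$ (the common dimension of every component in a crossed product), whereas you derive the full convolution identity $\dim_F A_g=\sum_x N_xN_{xg^{-1}}$ and invoke the equality case of Cauchy--Schwarz to obtain constancy of $N$ — equivalent in substance, just slightly longer.
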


\begin{proof}
Suppose $A$ has a presentation as above where right $H$-cosets are equally represented. We know the elements of the form $u_{h} \otimes e_{i,j} \otimes E$, where $i,j \in \{1,\ldots,[G:H]\}$, $E \in M_{r}(F)$ are homogeneous of degree $g_{i}^{-1}h g_{j}$ and span $A$. We note the homogeneous components are concentrated in \textit{block permutation} matrices which means that for any $g \in G$ and any $i \in \{1,\ldots,[G:H]\}$ there exist a unique $h=h(i)$ and unique $j=j(i)$ such that for every $E \in M_{r}(F)$ the element $u_{h} \otimes e_{i,j} \otimes E$ has homogeneous degree $g$. It follows that the element

$$\sum_{i\in \{1,\ldots,[G:H]\}} u_{h(i)}\otimes e_{i,j(i)}\otimes I_{r},$$ where $I_{r}$ is the identity matrix of rank $r$,
is invertible and homogeneous of degree $g$. The inverse is $$\sum_{i\in \{1,\ldots,[G:H]\}} c^{-1}(h(i),h(i)^{-1})u_{h(i)^{-1}}\otimes e_{j(i),i}\otimes I_{r}.$$
As noted above, in this case the dimensions of the homogeneous components of $A$ coincide and equal $[G:H]r^{2} = dim_{F}(A)/\mid G \mid
$.

For the converse let $r_{1},\ldots,r_{[G:H]}$ be the multiplicities of the right $H$-cosets in $\mathfrak{g}$ and suppose are not all equal. Clearly $r_{1}+\cdots + r_{[G:H]} = m$. Moreover, by definition of the $G$-grading on $A$, $dim_{F}(A_{e}) = r_{1}^{2} + r_{2}^{2} + \cdots + r_{[G:H]}^{2}$ which strictly exceeds $dim_{F}(A)/\mid G \mid
$. This implies the dimension of the homogeneous components are not equal and hence $A$ is not a $G$-crossed product.
\end{proof}

In particular we have

\begin{cor}
Let $A$ be a finite dimensional $G$-simple algebra with presentation $(H,[c],\mathfrak{g})$. Suppose $H$ is normal and multiplicities of $H$-coset representatives are equal $(=r)$. Then the homogeneous components consist of block permutation matrices.
\end{cor}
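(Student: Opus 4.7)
The plan is to deduce the claim directly from the explicit description of $A_g$ furnished by the Bahturin-Sehgal-Zaicev theorem together with the normalized presentation. After normalization, $\mathfrak{g}$ groups the indices $\{1,\ldots,m\}$ into $[G:H]$ blocks of equal size $r$, with block $i$ labeled by the coset representative $g_i$. Consequently $A_g$ is spanned by those basis elements $u_h\otimes e_{s,t}^{(i,j)}$ (with $s\in\{1,\ldots,r\}$, $t\in\{1,\ldots,r\}$) satisfying the homogeneity condition $g_i^{-1}hg_j=g$ with $h\in H$.

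The key step is to fix $g\in G$ and a block-row index $i$ and show that there exists a unique pair $(h(i,g),j(i,g))\in H\times\{1,\ldots,[G:H]\}$ for which the above condition can be met. Solving for $h$ yields $h=g_igg_j^{-1}$, so the requirement $h\in H$ rewrites as $g_j\in Hg_ig$, a single right $H$-coset. Since $g_1,\ldots,g_{[G:H]}$ form a complete set of representatives of the right $H$-cosets of $G$, exactly one of them lies in $Hg_ig$; this determines $j(i,g)$, and then $h(i,g)=g_igg_{j(i,g)}^{-1}\in H$ is forced. Normality of $H$ is not strictly needed for this uniqueness step, but it allows the alternative description $Hg_ig=g_iHg=g_igH$, which is how the datum $h(i,g)$ interacts cleanly with the group structure and with the data needed for later constructions.

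Combining these observations, the homogeneous component decomposes as
\[
A_g=\bigoplus_{i=1}^{[G:H]} F\,u_{h(i,g)}\otimes \mathrm{span}_F\{e_{s,t}^{(i,j(i,g))}:1\le s,t\le r\},
\]
which is exactly the block permutation structure asserted: in each block-row $i$ there is a single nonzero $r\times r$ block, located in block-column $j(i,g)$, and its entries are arbitrary scalar multiples of $u_{h(i,g)}$. No serious obstacle is expected; the only nontrivial content is the uniqueness of $j(i,g)$, which was already implicitly used in the proof of the preceding proposition when constructing an invertible element in $A_g$.
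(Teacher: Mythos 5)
Your proof is correct and follows essentially the same approach as the paper: the paper does not prove this corollary separately but extracts it from the proof of the proposition immediately preceding it, where the existence and uniqueness of the pair $(h(i),j(i))$ is asserted without details; you supply exactly those details (the coset computation showing $g_j\in Hg_ig$ determines $j$, then $h$). Your side remark that normality is not actually needed is also consistent with the paper, whose preceding proposition explicitly allows $H$ ``not necessarily normal.''
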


Our next step is to put restrictions (i.e. necessary conditions) on the cohomology class $[c]$ provided the $G$-simple algebra $A$ is strongly verbally prime. This will lead to a necessary condition on $[c]$ which together with the conditions on $H$ and on $\mathfrak{g}$ mentioned above turns out to be also sufficient for strongly verbally primeness. In order to proceed we recall briefly the theory and general terminology which relate the cohomology group $H^{2}(H,F^*)$ with $H$-graded PI theory (see \cite{AHN}, \cite{AljHaile}).

Let $H$ be a finite, $c: H \times H \rightarrow F^{*}$ a $2$-cocycle and $[c]$ the corresponding cohomology class. Let $F^{c}H$ be the corresponding twisted group algebra and let $\{u_{h}\}_{h \in H}$ be a set of representatives for the $H$-homogeneous components in $F^{c}H$. We have $u_{h_1}u_{h_2}= c(h_1,h_2)u_{h_1h_2}$ for $h_i \in H$. Let $n$ be a positive integer. For $\mathfrak{\mathfrak{h}}=(h_{1},\ldots,h_{n})\in H^{n}$ and $\sigma\in S_{n}$ consider

\[
Z_{\mathfrak{h},\sigma}=\prod_{i=1}^{n}x_{i.h_{i}}-\underset{\alpha_{\mathfrak{h},\sigma}}{\underbrace{\frac{c(\mathfrak{\mathfrak{h}})}{c(\mathfrak{\mathfrak{h}}_{\sigma})}}}\prod_{i=1}^{n}x_{\sigma(i),h_{\sigma(i)}},
\]
where $c(\mathfrak{\mathfrak{h}})\in F^{*}$ (slightly abusing notation) is the element determined
by $u_{h_{1}}\cdots u_{h_{n}}=c(\mathfrak{\mathfrak{h}})u_{h_{1}\cdots h_{n}}$ and $\mathfrak{\mathfrak{h}}_{\sigma}=(h_{\sigma(1)},\ldots,h_{\sigma(n)})$.
We are interested in the binomials $Z_{\mathfrak{h},\sigma}$ which
are $H$-graded identities of $F^{c}H$. This happens precisely when
\[
h_{1}\cdots h_{n}=h_{\sigma(1)}\cdots h_{\sigma(n)}.
\]
Note that if $h_{1}\cdots h_{n}=h_{\sigma(1)}\cdots h_{\sigma(n)}$, the polynomial $\prod_{i=1}^{n}x_{i.h_{i}}-\lambda \cdot\prod_{i=1}^{n}x_{\sigma(i),h_{\sigma(i)}}$ is an $H$-graded identity if and only if $\lambda = \alpha_{\mathfrak{h},\sigma}$.

We refer to these binomials as the \emph{binomial identities} of the twisted group algebra $F^{c}H$. It is known that the binomial identities generate the $T$-ideal of $H$-graded identities of $F^{c}H$ (see \cite{AljHaile} ,Theorem $4$). Furthermore, one sees easily that the scalars $\alpha_{\mathfrak{h},\sigma}$ depend on the cohomology class $[c]$ but not on the representative $c$.

Next it is convenient to introduce into our discussion presentations of the group $H$.
Let $\mathcal{F}$ be the free
group generated by $\Omega=\left\{ x_{i,h}|\, i\in\mathbb{N}\,,h\in H\right\} $
and consider the exact sequence
\[
\xymatrix{1\ar[r] & \mathfrak{R}\ar[r] & \mathcal{F}\ar[r] & H\ar[r] & 1}
\]
where the map $\mathcal{F}\to H$ is given by $x_{i,h}\mapsto h$.
This induces the central extension
\[
\xymatrix{1\ar[r] & \mathfrak{R}/[\mathcal{F},\mathfrak{R}]\ar[r] & \mathcal{F}/[\mathcal{F},\mathfrak{R}]\ar[r] & H\ar[r] & 1}
.
\]
Consider the central extension
\[
\xymatrix{1\ar[r] & F^{*}\ar[r] & \Gamma\ar[r] & H\ar[r] & 1}
\]
which corresponds to the cocycle $c$ (i.e. $\{u_{h}|\, h\in H\}$
is a set of representatives in $\Gamma$ and $u_{h_{1}}u_{h_{2}}=c(h_{1},h_{2})u_{h_{1}h_{2}}$).
The map $\psi_{c}:\mathcal{F}/[\mathcal{F},\mathfrak{R}]\rightarrow\Gamma$,
given by $\psi_{c}(x_{i,h})=u_{h}$, induces a map of extensions and
its restriction $\psi_{c}:[\mathcal{F},\mathcal{F}]\cap\mathfrak{R}/[\mathcal{F},\mathfrak{R}]=M(H)\to F^{*}$
is independent of the presentation and corresponds to $[c]$.
That is, $\Phi([c])=\psi_{c}$ where $\Phi: H^{2}(G,F^{*}) \rightarrow Hom_{\mathbb{Z}}(M(H), F^{*})$ is the well known isomorphism of the Universal Coefficient Theorem. Here, $M(H)$ is the Schur multiplier of the group $H$ and the equality with $[\mathcal{F},\mathcal{F}]\cap\mathfrak{R}/[\mathcal{F},\mathfrak{R}]$ is the well known Hopf formula.
It follows that
\begin{multline*}
\Phi([c])\left(\underset{z}{\underbrace{x_{1,h_{1}}\cdots x_{n,h_{n}}(x_{\sigma(1),h_{\sigma(1)}}\cdots x_{\sigma(n),h_{\sigma(n)}})^{-1}}}\right)=u_{h_{1}}\cdots u_{h_{n}}(u_{h_{\sigma(1)}}\cdots u_{h_{\sigma(n)}})^{-1}\\
=\frac{c(\mathfrak{\mathfrak{h}})}{c(\mathfrak{\mathfrak{h}}_{\sigma})}=\alpha_{\mathfrak{h},\sigma}
\end{multline*}
Now, it is clear that $z\in[\mathcal{F},\mathcal{F}]$ and moreover, by the condition we imposed on $\mathfrak{\mathfrak{h}}$ and $\sigma \in S_{n}$,
$z\in\mathfrak{R}$ and hence $z \in \mathfrak{R} \cap [\mathcal{F},\mathcal{F}]$. It is not difficult to show that modulo
$[\mathcal{F},\mathfrak{R}]$ all elements of $M(H)$ are of this
form (see \cite{AHN}, Lemma $11$), and so, we readily see that the binomial identities of the twisted group algebra $F^{c}H$ determine uniquely the cohomology class $[c]$.

We are ready to prove the necessity of the third condition in Theorem \ref{main theorem1} for $A$ to be
strongly verbally prime, namely the cohomology class $[c]$ is $G$-invariant.

Recall that $G$ acts on $H^{2}(H,F^{*})$ via conjugation, that is for $[\beta]\in H^{2}(H,F^{*})$

\[
\left(g\cdot\beta\right)(h_{1},h_{2})=\beta(gh_{1}g^{-1},gh_{2}g^{-1}).
\]
Applying the isomorphism $\Phi$ one checks the cohomology class $[c]$ is $G-$invariant if and only if
\begin{equation}
\forall g\in G:\,\,\alpha_{\mathfrak{h},\sigma}=\alpha_{g\mathfrak{h}g^{-1},\sigma},\label{eq:0}
\end{equation}
for all $\alpha_{\mathfrak{h},\sigma}$ corresponding to the binomial identities $\{Z_{\mathfrak{h},\sigma}\}$.

\begin{lem}\label{invariance of cohomology}
\label{lem:-is-invariant}Suppose $A$ is strongly verbally prime with presentation $(H,[c],\mathfrak{g})$, $\mathfrak{g} =((g_{1},\ldots,g_{1}),(g_{2},\ldots,g_{2}),\ldots, (g_{k},\ldots,g_{k}))\in (G^{m_{block}})^{k}\simeq G^{m}$. Then the class $[c]$ is invariant under the action of
$G$, i.e. \ref{eq:0} holds.\end{lem}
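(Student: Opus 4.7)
The plan is to argue by contrapositive: assuming $[c]$ is not $G$-invariant, I construct two $G$-homogeneous polynomials $f$ and $q$ on disjoint sets of variables, both non-identities of $A$, such that $fq \in Id_{G}(A)$, contradicting strong verbal primeness. By the discussion preceding the lemma together with the Universal Coefficient isomorphism $\Phi$, $[c]$ fails to be $G$-invariant exactly when there exist a binomial $Z_{\mathfrak{h},\sigma}$ in $F^c H$ and an element $g \in G$ with $\alpha_{\mathfrak{h},\sigma} \neq \alpha_{g \mathfrak{h} g^{-1},\sigma}$; since $H$ is normal in $G$ by Proposition \ref{normal subgroup} and the scalar depends only on the coset $gH$, one may take $g = g_{i_0}$ for some coset representative.

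Next I record the block structure. The $H$-graded part of $A$ decomposes into $k$ diagonal subalgebras $\widetilde{\mathcal{B}}_1,\ldots,\widetilde{\mathcal{B}}_k$, where $\widetilde{\mathcal{B}}_i$ is supported on the matrix rows and columns indexed by the copies of $g_i$ and is $H$-graded isomorphic to $F^{g_i \cdot c} H \otimes M_r(F)$ with $r = m_{\mathbf{block}}$. Because the matrix supports of distinct blocks are disjoint, $\widetilde{\mathcal{B}}_i \cdot \widetilde{\mathcal{B}}_j = 0$ whenever $i \neq j$; this orthogonality is the mechanism that will force $fq$ to vanish on $A$.

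The crux of the argument is to construct, for every $\beta \in F^{*}$, a $G$-homogeneous polynomial $P_\beta$ that vanishes when evaluated on $\widetilde{\mathcal{B}}_i$ precisely when $\beta = \alpha_{g_i \mathfrak{h} g_i^{-1},\sigma}$. The bare binomial $\prod x_{j,h_j} - \beta \prod x_{\sigma(j),h_{\sigma(j)}}$ does not suffice when $r > 1$, since matrix multiplication spoils the binomial relation. Instead I adapt the Capelli-type framing used in Lemma \ref{lem:-same_block_size} and Proposition \ref{normal subgroup}: alternate $r^2$ $e$-graded frame variables around each of the two binomial monomials so that any nonzero evaluation is forced to supply a complete matrix-unit basis of a single block $\widetilde{\mathcal{B}}_i$. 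After this framing, an evaluation in block $i$ reduces, up to a nonzero scalar determined by the frames, to the pure evaluation of $\prod u^{(i)}_{h_j} - \beta \prod u^{(i)}_{h_{\sigma(j)}}$ in $F^{g_i \cdot c} H$, which vanishes iff $\beta = \alpha_{g_i \mathfrak{h} g_i^{-1},\sigma}$. Implementing this framing so that $P_\beta$ is simultaneously $G$-homogeneous and faithfully detects the binomial obstruction is the principal technical step.

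Finally I combine the selectors. Let $\alpha_1 := \alpha_{\mathfrak{h},\sigma}$ and partition $\{1,\ldots,k\} = I_1 \sqcup I_2$ according to whether $\alpha_{g_i \mathfrak{h} g_i^{-1},\sigma}$ equals $\alpha_1$, so $1 \in I_1$ and $i_0 \in I_2$, giving $I_2 \neq \emptyset$. Set $\mathcal{S}' := \{\alpha_{g_j \mathfrak{h} g_j^{-1},\sigma} : j \in I_2\}$ and pick pairwise disjoint variable sets $X$ and $Y^{(\beta)}$ for $\beta \in \mathcal{S}'$. Define
\[
f := P_{\alpha_1}(X), \qquad q := \prod_{\beta \in \mathcal{S}'} P_\beta(Y^{(\beta)}).
\]
Then $f(A) \subseteq \bigoplus_{i \in I_2} \widetilde{\mathcal{B}}_i$ (since $P_{\alpha_1}$ vanishes on $\widetilde{\mathcal{B}}_i$ for $i \in I_1$, and off-block products are zero), while $q(A) \subseteq \bigoplus_{j \in I_1} \widetilde{\mathcal{B}}_j$ (since for each $j \in I_2$ some factor of $q$ vanishes on $\widetilde{\mathcal{B}}_j$); block orthogonality then gives $f(A) \cdot q(A) = 0$, so $fq \in Id_{G}(A)$. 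Both $f$ and $q$ are non-identities --- for $q$, placing all factors' variables in a single block $i \in I_1$ and arranging matrix-unit evaluations appropriately makes each factor nonzero and their product in $\widetilde{\mathcal{B}}_i$ nonzero --- contradicting strong verbal primeness.
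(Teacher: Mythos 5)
Your high-level strategy is genuinely different from the paper's, and it is a plausible one: build a family of block-detector polynomials $P_\beta$, and exhibit two non-identities $f = P_{\alpha_1}$ and $q = \prod_{\beta\in\mathcal{S}'} P_\beta$ whose images sit in complementary families of diagonal blocks $\widetilde{\mathcal{B}}_i$, so that block orthogonality kills $fq$. The paper instead constructs a \emph{single} alternating polynomial $\Theta$ (a long Capelli-type monomial that traverses all $k$ blocks, with cohomology detectors $R(g_i([c]),m_{\mathrm{block}})$ inserted along the way), shows its image lies in an off-diagonal space $\hat{R}$ with $\hat{R}^2 = 0$, and then multiplies $\Theta$ against a disjoint copy of itself. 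These are different combinatorics (complementary diagonal blocks versus off-diagonal blocks sharing no common index), and either can in principle close the argument.

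The genuine gap is in the construction of $P_\beta$. You propose to build it by ``Capelli-type framing'' of the bare binomial $\prod x_{j,h_j} - \beta\prod x_{\sigma(j),h_{\sigma(j)}}$, alternating $e$-graded frames so that a nonzero evaluation is confined to one block $\widetilde{\mathcal{B}}_i$, and you assert that the result then ``reduces, up to a nonzero scalar determined by the frames,'' to the twisted-group-algebra binomial. This reduction fails when $r = m_{\mathrm{block}} > 1$. If the binomial variables are evaluated on general matrix units $u_{h'_j}\otimes e^{(i)}_{a_j,b_j}$, the two monomials of the binomial impose \emph{different} chaining constraints on the indices $(a_j,b_j)$; typically one monomial is nonzero while the permuted one vanishes outright, so $P_\beta$ evaluates to the first monomial alone and the cancellation that is supposed to detect $\beta = \alpha_{g_i\mathfrak{h}g_i^{-1},\sigma}$ never occurs. (Concretely with $r=2$, $n=2$, $\sigma=(12)$: evaluating $x_1 \mapsto u_{h'_1}\otimes e_{1,2}$, $x_2 \mapsto u_{h'_2}\otimes e_{2,1}$ with matching frames makes the first monomial nonzero and the second monomial zero, irrespective of $\beta$.) Alternation of $e$-frames pins down the \emph{block}, but does not force the binomial variables onto \emph{commuting} matrix parts, which is what the binomial mechanism requires. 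The tool that does the job, and that the paper uses, is the Regev central polynomial: replace each $h$-graded binomial variable by $p_h(X,Y)$ (a Regev polynomial with one variable of degree $h$ and the rest of degree $e$), which on block $i$ evaluates to a \emph{scalar} matrix $\lambda\, u_{g_i h g_i^{-1}}\otimes I_r$. Only then do the two monomials of the binomial recombine into $(\text{scalar})\bigl(\alpha_{g_i\mathfrak{h}g_i^{-1},\sigma} - \beta\bigr)\,u\otimes I_r$, giving a detector with the vanishing property you need; as a bonus, its nonzero values are invertible in $\widetilde{\mathcal{B}}_i$, which painlessly yields that $q = \prod_\beta P_\beta$ is a non-identity (a point your argument also leaves unjustified, since a product of non-identities of $F^cH\otimes M_r(F)$ with disjoint variables is not automatically a non-identity). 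With $P_\beta$ built this way your $f,q$ argument goes through and gives a legitimate alternative to the paper's proof.
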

\begin{proof}
First note that since \ref{eq:0} holds for $g\in H$ (indeed, it is well known that the inner action of $H$ induces a trivial action on $H^{2}(H,F^*)$) it is sufficient
to check the invariance of $[c]$ for the action of elements from $\mathfrak{g}_{\mathbf{set}}$ (a full set of $H$-coset representatives in $G$).

Suppose this is not the case. Note that by using elementary moves on the presentation of $A$ we may assume the tuple $\mathfrak{g}$ is ordered so that

\begin{enumerate}

\item
Equal cosets representatives are adjacent.

\item

Elements $g_i$ which normalize the cohomology class $[c]$ ``stand on the left of the tuple'' whereas $g_j$'s which do not normalize ``stand on the right'', that is there is $1 \leq t_{0} < k$ such that $g_{j}$ normalizes $[c]$ if $1 \leq j \leq t_{0}$ and $g_{j}$ does not normalize $[c]$ if $t_{0} < j \leq k$.

\end{enumerate}
Note that since $e$ normalizes $[c]$ we may assume $g_1=e$ whereas (by our ordering) $g_{k}$ does not normalize $[c]$.

Now, we know that the set of Binomial Identities of $F^{\gamma}H$ determines the cohomology class $\gamma$ (indeed, the Binomial identities generate the $T$-ideal of graded identities and the latter determines $\gamma$). Moreover, if $\alpha$ and $\gamma$ are \textit{different} cohomology classes of $H^{2}(H,F^{*})$ we have $Id_{H}(F^{\gamma}H) \nsubseteq Id_{H}(F^{\alpha}H)$. Let us fix for any such pair $(\alpha, \gamma)$ a binomial polynomial $B(\bar{\alpha}, \gamma)$ which is an $H$-graded identity of $F^{\gamma}H$ but a nonidentity of $F^{\alpha}H$. Furthermore, for any $\beta \in H^{2}(H,F^{*})$ we let $R(\beta)=\prod_{\gamma \neq \beta}(B(\bar{\beta}, \gamma)))$, where the binomial polynomials are taken with disjoint sets of variables implying $R(\beta)$ is multilinear. Note that since any twisted group algebra is a $H$-graded division algebra the polynomial $R(\beta)$ is a nonidentity of $F^{\beta}H$ but an $H$-identity of $F^{\gamma}H$ for any $\gamma\neq \beta$.

Now, we proceed as in [AH]. Let $p(X_{m^{2}_{block}}, Y_{m^{2}_{block}})$ denote the central Regev polynomial on $2m^{2}_{block}$ variables. Fixing $h \in H$, construct an $H$-graded polynomial $p_{h}(X_{m^{2}_{block}}, Y_{m^{2}_{block}})$ which is obtained from $p(X_{m^{2}_{block}}, Y_{m^{2}_{block}})$ by replacing one of its $X$ variables, say $x_{1}$, by a variable $x_{1,h}$ homogeneous of degree $h$, and all other variables $x_{i}, 2 \leq i \leq m^{2}_{block}$ and $y_{j}, 1 \leq j \leq m^{2}_{block}$ are replaced by variables of degree $e$. Finally, we replace all variable of $R(\beta)$ by Regev central polynomials. More precisely, we replace any variable in $R(\beta)$ of degree $h$ by a multilinear polynomial $p_{h}(X_{m^{2}_{block}}, Y_{m^{2}_{block}})$ (as usual, different variables of $R(\beta)$ are substituted by multilinear polynomial with disjoint sets of variables). We obtain a multilinear polynomial which we denote by $R(\beta, m_{block})$.

For $i=1,\dots,k$, consider the monomial of $e$-variables

$$Z_{i}=y_{0}^{(i)}x_{1}^{(i)}y_{1}^{(i)}\cdots x_{m_{\mathbf{block}}^{2}}^{(i)}y_{m_{\mathbf{block}}^{2}}^{(i)}$$
and let
$$Z=Z_{1}w_{1}Z_{2}\cdots w_{k-1}Z_{k},$$ where $w_{i}$
is a $g_{i}^{-1}h_{i}g_{i+1}-$graded variable and $h_{i} \in H$ to be chosen (in fact any element of $H$ will do). For $i=1,\ldots,k$, we insert in $Z$ the polynomial $R(g_{i}([c]), m_{block})$ on the right of $Z_{i}$. We let
$$
\hat{Z} = Z_{1}R(g_{1}([c]), m_{block})w_{1}Z_{2}R(g_{2}([c]), m_{block})\cdots w_{k-1}Z_{k}R(g_{k}([c]), m_{block}).
$$

Finally, we let $\Theta$ be the multilinear polynomial obtained by alternating the set of $e$-variables $X= \{x^{(i)}_j\}$ in $\hat{Z}$. We claim

\begin{enumerate}

\item

$\Theta$ is a $G$-graded nonidentity of $A$.
\item

All evaluations of the $G$-graded polynomial $\Theta$ on $A$ yield elements in $\hat{R}=span\{F^{[c]}H\otimes e_{r,s}^{(i,j)}\}$ where $(i,j)$ satisfies the following condition: $g_{1}^{-1}g_{i}$ and $g_{k}^{-1}g_{j}$ normalize the cohomology class $[c]$. Note that this is equivalent to
$$R(g_{i}([c]), m_{block}) = R(g_{1}([c]), m_{block})$$ and $$R(g_{j}([c]), m_{block}) = R(g_{k}([c]), m_{block}).$$

\item

$\hat{R}^{2}=0$.

\end{enumerate}

Note that the lemma follows at once from the claim. Indeed, if we duplicate the polynomial $\Theta$ with different variables and denote them by $\Theta_{1}$ and $\Theta_{2}$, we obtain two homogeneous $G$-graded nonidentities with different variables whereas their product is an identity.

To see the first part of the claim consider the evaluation of the monomials $Z_{i}$, $i=1,\ldots,k$ on the $i$th $e$-block of $A$. We then evaluate the polynomials $R(g_{i}([c]), m_{block})$, $i=1,\ldots,k$, on the same block of $A$ (the $i$th block). Finally, the variables $w_i$, $i=1,\ldots, k-1$ are evaluated at elements that bridge the $i$th and $i$th$+1$ diagonal blocks. By construction the value of $Z$ is nonzero. The polynomial $\Theta$ is obtained from $Z$ by alternating the variables of $X$. Since any nontrivial permutation of these variables yields a zero value (the values of $\{y^{i}_{j}\}$ and $\{x^{i}_{j}\}$ do not match) we get a nonzero evaluation of $\Theta$ on $A$ as desired.

For the second part of the claim note that by the alternation the elements of $X$ must be evaluated precisely on all $e$-elements of $A$ as long as the value is nonzero. Moreover, since the variables $y^{i}_{j}$ are also $e$-variables, the elements of $Z_{i}$ (and in particular the $x$'s variables of $Z_{i}$) must be evaluated on the same $e$-block of $A$, again as long as the evaluation is nonzero. But the presence of the polynomials $R(g_{j}([c]), m_{block})$ in $\Theta$ implies the evaluation of $\Theta$ on $A$ vanishes unless the monomial $Z_{j}$, any $j$, is evaluated on a $e$-block whose index is $\hat{j}$ for which $g_{\hat{j}}([c])=g_{j}([c])$. In particular the monomials $Z_{1}$ and $Z_{k}$ must be evaluated on $e$-blocks indexed by $\hat{1}$th and $\hat{k}$th respectively. This proves part $2$ of the claim. The $3$rd part of the claim follows from the fact that $\hat{1} < \hat{k}$. This completes the proof of the claim and hence of the lemma.

\end{proof}

In order to complete the proof of Theorem \ref{main theorem1} we need to show that if $A$ is a finite dimensional $G-$simple algebra with presentation $(H,[c],\mathfrak{g})$, where

\begin{enumerate}
\item

$H$ is normal,

\item

$\mathfrak{g} =((g_{1},\ldots,g_{1}),(g_{2},\ldots,g_{2}),\ldots, (g_{k},\ldots,g_{k}))\in (G^{m_{block}})^{k}\simeq G^{m}$

\item

$[c] \in H^{2}(H,F^{*})$ is $G$ invariant

\end{enumerate}
then $A$ is strongly verbally prime.

For the proof we shall replace the $3$rd condition with an apparently stronger condition $3'$ which we call the \textit{path property}. After introducing that property, we will show firstly that $1 + 2 +3'$ is sufficient for strongly verbally prime and secondly that conditions $1 + 2 +3$ and $1 + 2 +3'$ are equivalent. For future reference let us state the following.

\begin{thm} \label{main theorem1'}
Let $A$ be a finite dimensional $G$-simple algebra over $F$ with presentation $P_{A}=(H, \alpha, \mathfrak{g}=(g_1,\ldots,g_k))$. Then condition $1 +2 +3$ $($Theorem \ref{main theorem1}$)$ is equivalent to $1 +2 +3'$ below.

\begin{enumerate}
\item
The group $H$ is normal in $G$.

\item

The different cosets of $H$ in $G$ are represented exactly the same number of times in the $k$-tuple $\mathfrak{g}=(g_1,\ldots,g_{k})$. In particular the integer $k$ is a multiple of $[G:H]$.

\item[(3')]
 The \textit{path property}.

\end{enumerate}

\end{thm}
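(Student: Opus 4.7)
The plan is to establish the equivalence by treating (3') as a combinatorial reformulation of the cohomological $G$-invariance in (3), tailored for evaluating graded polynomials on the simple algebra $A$. Throughout I will exploit the characterization recalled just before Lemma \ref{invariance of cohomology}: via the Universal Coefficient Theorem isomorphism $\Phi\colon H^{2}(H,F^{*})\to\mathrm{Hom}_{\mathbb{Z}}(M(H),F^{*})$ and the Hopf-formula description $M(H)=[\mathcal{F},\mathcal{F}]\cap\mathfrak{R}/[\mathcal{F},\mathfrak{R}]$, $G$-invariance of $[c]$ is encoded by the family of scalars $\alpha_{\mathfrak{h},\sigma}$ attached to binomial identities, via the requirement $\alpha_{\mathfrak{h},\sigma}=\alpha_{g\mathfrak{h}g^{-1},\sigma}$ for every admissible pair $(\mathfrak{h},\sigma)$ and every $g\in G$.

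For the direction (3) $\Rightarrow$ (3'), I would start from the $G$-invariance of $[c]$ and, for each coset representative $g_i\in\mathfrak{g}_{\mathbf{set}}$, fix a $1$-cochain $\mu_{g_i}\colon H\to F^{*}$ realizing the cohomology $c\sim g_i\cdot c$. Since $H$ is normal in $G$ (condition (1)) and $H$-cosets are equally represented in $\mathfrak{g}$ (condition (2)), the grading on $A$ consists of block permutation matrices (see the corollary following the crossed product proposition). I would then translate the cochains $\mu_{g_i}$ into a coherent choice of scalars along the paths that move between $e$-blocks via the bridging variables, and verify that cocycle relations among the $\mu_{g_i}$'s force closed paths to contribute trivially — this will be precisely the path property.

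For the reverse direction (3') $\Rightarrow$ (3), I would invert the translation above: the path property, applied to paths that travel from $g_i$ to $g_j$ via two different routes (say, directly and via a conjugation by some $g\in G$), will yield equalities among products of values of $c$ that, when read through $\Phi$, are exactly the equalities $\alpha_{\mathfrak{h},\sigma}=\alpha_{g\mathfrak{h}g^{-1},\sigma}$. Using Lemma~11 of \cite{AHN}, which guarantees that the elements $z_{\mathfrak{h},\sigma}=x_{1,h_1}\cdots x_{n,h_n}(x_{\sigma(1),h_{\sigma(1)}}\cdots x_{\sigma(n),h_{\sigma(n)}})^{-1}$ exhaust $M(H)$ modulo $[\mathcal{F},\mathfrak{R}]$, this will suffice to deduce $G$-invariance of $[c]$.

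The main obstacle I anticipate is the (3') $\Rightarrow$ (3) direction, where one must be sure that the path property sees \emph{enough} of the cocycle to constrain all cohomology classes in the $G$-orbit of $[c]$. Concretely, the difficulty is in showing that every binomial relation $Z_{\mathfrak{h},\sigma}$ detecting the difference between $[c]$ and $g\cdot[c]$ can be realized through a closed path in the relevant graph/quiver on $\mathfrak{g}_{\mathbf{set}}$; this should follow once the equal-multiplicity condition (2) is used to produce enough bridging variables to close any chosen loop.
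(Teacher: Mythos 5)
The paper does not prove the equivalence $1+2+3 \Leftrightarrow 1+2+3'$ by a direct two-way translation between cocycle invariance and path combinatorics, which is what you propose. Instead it routes both directions through the strongly-verbally-prime condition: for $(3') \Rightarrow (3)$ it uses Proposition \ref{equivalence strongly and 1+2+3'} (path property $\Rightarrow$ strongly verbally prime) followed by Lemma \ref{invariance of cohomology} (strongly verbally prime $\Rightarrow$ $G$-invariance of $[c]$); for $(3) \Rightarrow (3')$ it first proves the implication directly only in the special case $r=1$ (Proposition \ref{necessity of path property r=1}), and then handles arbitrary $r$ by an indirect detour through Theorem \ref{main theorem2}, producing a $G$-graded division algebra twisted form by tensoring the $r=1$ form with a symbol algebra $(t,s)_r$ over a rational function field, and feeding back through the division-algebra $\Rightarrow$ strongly-verbally-prime $\Rightarrow$ path-property chain.

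Your proposal has a genuine gap for the direction $(3) \Rightarrow (3')$: you say nothing about the dependence on $r = m_{\mathbf{block}}$, and the "coherent choice of scalars along paths" you sketch implicitly relies on producing binomial identities $Z - s Z_\sigma$. That works only when $r=1$; for $r > 1$ the monomials $Z$ and $Z_\sigma$ take values in whole $r \times r$ blocks, and $Z - s Z_\sigma$ is no longer a graded identity of $A$, so the inductive reduction to fewer monomials no longer applies. Showing instead that the per-path normalizing scalars $\gamma_\sigma$ are path-independent (which is the actual content of the path property when $r > 1$, as extracted in the proof of Proposition \ref{equivalence strongly and 1+2+3'}) requires more work than you indicate, and it is exactly what the paper circumvents by passing to a division-algebra form. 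You should either explicitly carry out this $\gamma_\sigma$-independence computation for general $r$, or adopt the paper's detour. For $(3') \Rightarrow (3)$ your direct translation into binomial relations on $M(H)$ is a plausible alternative, and citing Lemma 11 of \cite{AHN} is the right ingredient, but this is unnecessary here since the paper already has the cleaner implication chain $3' \Rightarrow$ strongly verbally prime $\Rightarrow 3$, and your sketch leaves open exactly the step you yourself flag as the anticipated obstacle, namely that every relevant binomial relation can be realized along a path in the transversal.
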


Let us introduce the notation we need.

\begin{defn} (Good permutations and pure polynomials)\label{Good and Pure}

For $g\in G$ denote
by $\overline{g}$ its image in $G/H$. We say the $G-$graded
multilinear monomial $Z_{\sigma}=x_{\sigma(1),t_{\sigma(1)}}\cdots x_{\sigma(n),t_{\sigma(n)}}$ is a \textit{good permutation}
of $Z=x_{1,t_{1}}\cdots x_{n,t_{n}}$
if
\begin{enumerate}
\item $(\mbox{deg}Z=)t_{1}\cdots t_{n}=t_{\sigma(1)}\cdots t_{\sigma(n)}(=\mbox{deg}Z_{\sigma})$.
\item For every $1\leq i\leq n$, $\overline{t_{1}\cdots t_{i}}=\overline{t_{\sigma(1)}\cdots t_{\sigma(\sigma^{-1}(i))=i}}$.
\end{enumerate}

A $G$-graded multilinear polynomial is said to be \textit{pure} if all its monomials are \textit{good permutations} of each other.
\end{defn}

\begin{note} \begin{enumerate}

\item Good permutation on monomials is an equivalence relation.
\item If all variables degrees
are in $H$, then the second condition says nothing. In that case \textit{good permutation} reduces to $Z,Z_{\sigma}$ having the same $G$-degree.
\end{enumerate}
\end{note}

Our first step is to show that it is sufficient to consider \textit{pure} polynomials when analyzing the $T$-ideal of $G$-graded identities of $A$. We assume the algebra $A$ is $G-$simple with presentation $(H,[c],\mathfrak{g})$, where $H$ is normal and

$\mathfrak{g} =((g_{1},\ldots,g_{1}),(g_{2},\ldots,g_{2}),\ldots, (g_{k},\ldots,g_{k}))\in (G^{m_{block}})^{k}\simeq G^{m}$.

\begin{lem}\label{decomposition of identities into Pure polynomials}

Let $p=\sum_{\sigma \in S_{n}}\alpha_{\sigma}Z_{\sigma}$ be a multilinear $G$-graded polynomial. Let $p=p_1+\ldots +p_q$ be the decomposition of $p$ into its \textit{pure} components. Then $p$ is a $G$-graded identity of $A$ if $($and only if$)$ $p_{i}$ is a $G$-graded identity of $A$ for any $i=1,\ldots,q$.

\end{lem}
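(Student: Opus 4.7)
My plan is as follows. The ``if'' direction is immediate, since $p=p_{1}+\cdots+p_{q}$, so the content lies in the converse: assuming $p\in \mathrm{Id}_{G}(A)$, show that each pure component $p_{j}$ is itself an identity of $A$.

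The key observation I plan to exploit is that since $H$ is normal in $G$, the $G$-grading on $A$ descends to a $G/H$-grading, and the good permutation condition says precisely that to every variable $x_{i}$ one may attach a well-defined pair $(\bar{g}_{i}^{\mathrm{in}},\bar{g}_{i}^{\mathrm{out}}) \in (G/H)^{2}$, satisfying $\bar{g}_{i}^{\mathrm{out}} = \bar{g}_{i}^{\mathrm{in}}\cdot \bar{t}_{i}$, namely the $G/H$-partial product in any monomial of $p_{j}$ just before and just after the slot occupied by $x_{i}$. Two monomials are good permutations of one another if and only if they share this per-variable ``signature'', and hence $p_{j}$ is completely determined by its signature $\{(\bar{g}_{i}^{\mathrm{in}},\bar{g}_{i}^{\mathrm{out}})\}_{i=1}^{n}$.

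The strategy is then to design, for each pure component $p_{j}$, a family of homogeneous evaluations on $A = F^{c}H\otimes M_{m}(F)$ that ``sees'' exactly $p_{j}$. Using that $H$-cosets are equally represented in $\mathfrak{g}$, I can pick, for each $i$, block indices $\mu_{i},\nu_{i}\in \{1,\ldots,k\}$ lifting $\bar{g}_{i}^{\mathrm{in}},\bar{g}_{i}^{\mathrm{out}}$, let $h_{i}\in H$ be the unique element with $g_{\mu_{i}}^{-1} h_{i} g_{\nu_{i}} = t_{i}$, and evaluate $x_{i}$ at $u_{h_{i}}\otimes e_{r_{i},s_{i}}^{(\mu_{i},\nu_{i})}$ with the within-block indices $r_{i},s_{i}$ allowed to vary. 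The heart of the argument is the claim that on such an evaluation a monomial $Z_{\sigma}$ contributes a non-zero value if and only if the blocks chain up, $\nu_{\sigma(l)} = \mu_{\sigma(l+1)}$ for every $l$. Starting from the empty prefix $\bar{g}_{\sigma(1)}^{\mathrm{in}}=\bar{e}$ and iterating, this chain condition is equivalent to saying that $\bar{g}_{i}^{\mathrm{in}}$ coincides with the actual $G/H$-partial product of $Z_{\sigma}$ up to the slot of $x_{i}$ for every $i$ --- which is exactly the good permutation relation between $Z_{\sigma}$ and the monomials of $p_{j}$.

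Consequently, on this evaluation the only surviving monomials of $p$ are those of $p_{j}$, so $p(a_{1},\ldots,a_{n}) = p_{j}(a_{1},\ldots,a_{n})$; hence $p\in \mathrm{Id}_{G}(A)$ forces $p_{j}$ to vanish on every such evaluation. Conversely, on any homogeneous evaluation whose block pattern does \emph{not} realize the signature of $p_{j}$, every monomial of $p_{j}$ is already killed by a failure of the chain condition, so $p_{j}$ vanishes there automatically. Letting $(\mu_{i},\nu_{i},r_{i},s_{i})$ vary sweeps out a complete $G$-homogeneous basis for each $A_{t_{i}}$, and we conclude $p_{j}\in \mathrm{Id}_{G}(A)$. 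The main subtlety to be careful about in the write-up is this equivalence between the block-chain condition on the evaluation and the good-permutation condition on monomials; the rest is routine bookkeeping of matrix-unit multiplications inside $F^{c}H\otimes M_{m}(F)$.
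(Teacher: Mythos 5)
Your high-level strategy — design homogeneous evaluations that isolate a single pure component by exploiting the block-permutation structure of $A=F^{c}H\otimes M_{m}(F)$ — is the same as the paper's, and the block-chain observation ($\nu_{\sigma(l)}=\mu_{\sigma(l+1)}$ being the non-vanishing criterion for $Z_{\sigma}$) is the right engine. However, as written the proposal has two genuine gaps.

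First, your ``signature'' lives in $(G/H)^{2}$ and therefore only encodes condition (2) of the good-permutation definition (matching $G/H$-partial products); it does not see condition (1), equality of total $G$-degrees. Thus the claim that ``two monomials are good permutations of one another if and only if they share this per-variable signature'' is false in general. For instance, if all $t_{i}$ lie in a nonabelian $H$ the signature condition is vacuous, yet monomials with different products $t_{\sigma(1)}\cdots t_{\sigma(n)}$ are not good permutations. Consequently a monomial from a different pure component $p_{s}$ may satisfy your block-chain condition with the same starting block. The paper sidesteps this by first reducing to the case where $p$ is $G$-homogeneous (all monomials have the same $G$-degree $g$), which is legitimate because the $G$-homogeneous components of $F\langle X_{G}\rangle$ map to linearly independent subspaces of $A$; only then is the chain-plus-starting-block condition equivalent to the good-permutation relation. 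You need this reduction.

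Second, even after that reduction, the step ``on this evaluation the only surviving monomials of $p$ are those of $p_{j}$, so $p(a)=p_{j}(a)$'' is too strong. A monomial of $p_{s}$, $s\neq j$, can still have a nonzero value on the very same evaluation — it simply chains up starting from a block index different from the one corresponding to $\bar{e}$, and its value therefore lies in a different matrix block $e_{i,j(i)}$. (Your own chain condition places no constraint on $\mu_{\sigma(1)}$.) The correct statement, and the one the paper uses, is that the projection of $p(a)$ onto the fixed block $(\mu_{1},\nu_{n})$ equals $p_{j}(a)$: by the paper's ``claim'' inside the proof, the monomials of $p$ that contribute to that block with nonzero value and fixed starting block are precisely the good permutations of $Z$. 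From $p(a)=0$ one then concludes $p_{j}(a)=0$ by reading off that single block, not by equating $p(a)$ with $p_{j}(a)$. Both omissions are repairable along the lines of the paper's argument, but as stated the proposal's key equivalence and its consequence are incorrect.
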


\begin{proof}
Since $G$-homogeneous components are linearly independent subspaces of $A$ we may restrict our attention to monomials which have the same homogeneous degree so the first condition in the definition of \textit{good permutation} is satisfied. We therefore let all monomials of $p$ be homogeneous of degree $g$.
Since the polynomial $p$ is multilinear we can restrict evaluations to a spanning set of $A$ and in particular to elements of the form $b_{h, \{i,j\},E}=u_{h}\otimes e_{i,j} \otimes E$, where $h \in H$, $i,j \in \{1,\ldots, [G:H]\}$, $E \in M_{r}(F)$. Note that the homogeneous degree of $b_{h, \{i,j\},E}$ is determined by $u_{h}\otimes e_{i,j}$, namely $g_{i}^{-1}hg_{j}$, as the degree of $E$ is trivial.
Let $x_{g}$ be a variable of degree $g$. By our assumption, its admissible evaluations are elements $b_{h, \{i,j\},E}$ where $g=g_{i}^{-1}hg_{j}$. The following observations are important.
\begin{enumerate}
\item
For any $i\in \{1,\ldots, [G:H]\}$ there exist $h$ and $j$ such that $g=g_{i}^{-1}hg_{j}$.
\item
$h=h(i)$ and $j=j(i)$ are uniquely determined.

\end{enumerate}
This follows easily from the fact that $\{g_j\}$ is a full set of $H$-cosets representatives and $g_{i}g \in \cup_{j} Hg_{j}$.

Let $Z=x_{t_1}x_{t_2}\cdots x_{t_n}$. Following the above observations we choose any index $i \in \{1,\ldots,[G:H]\}$. This determines $h=h(i)$ and $j=j(i)$ in any possible value of $x_{t_1}$, that is $x_{t_1}$ is evaluated at $b_{h(i), \{i,j(i)\},E}$, any $E\in M_{r}(F)$, where $g_{i}^{-1}h(i)g_{j(i)} = t_1$ (i.e. the degree of $b_{h(i), \{i,j(i)\},E}$ is $t_1$). It follows that if we insist on nonzero evaluations of the monomial $Z$, the evaluation of $x_{t_2}$ must be of the form $b_{h', \{j(i),j'\},E'}$ and so on. We see that the choice of $i=i_1$ (for the evaluation of $x_{t_1}$) determines uniquely the triples $(i_1,j_1,h_1),\ldots,(i_n,j_n, h_n)$. Note on the other hand that a monomial $Z$ as above indeed admits nonzero evaluations with elements $b_1,\ldots,b_n$ where $b_\nu =b_{h_\nu, \{i_\nu,j_\nu\},E_{\nu}}$ for any choice $i_1 \in \{1,\ldots,[G:H]\}$ (for instance any evaluation of the form where $E_t$ is invertible will do).

Claim: Let $Z=x_{1,t_1}x_{2,t_2}\cdots x_{n,t_n}$ be a $G$-graded monomial as above and let
$$(i_1,j_1,h_1),\ldots,(i_n,j_n, h_n)$$ be $n$-triples such that the evaluation of $Z$  with $x_{t_\nu} \leftarrow b_\nu =b_{h_\nu, \{i_\nu,j_\nu\},E_{\nu}}$, $\nu=1,\ldots,n$, is nonzero. Let $Z_\sigma=x_{\sigma(1),t_\sigma(1)}x_{\sigma(2),t_\sigma(2)}\cdots x_{\sigma(n),t_\sigma(n)}$ be a monomial obtained by permuting the variables of $Z$ with $\sigma \in S_{n}$. Suppose the total $G$-degrees of $Z$ and $Z_{\sigma}$ coincide. Suppose also the evaluation above satisfies the following conditions

\begin{enumerate}

\item
The evaluation does not annihilates $Z_{\sigma}$

\item

$i_{\sigma(1)}=i_{1}$

\end{enumerate}
Then $Z$ and $Z_{\sigma}$ are \textit{good permutations} of each other. Consequently, $j_{\sigma(n)}=j_{n}$ and $e_{i_{\sigma(1)},j_{\sigma(1)}}\cdots e_{i_{\sigma(n)},j_{\sigma(n)}} = e_{i_{1},j_{1}}\cdots e_{i_{n},j_{n}}$.

Let us complete the proof of the lemma using the claim. Suppose $p_1(x_{t_1},x_{t_2}\cdots x_{t_n})$ is a nonidentity. We need to show $p(x_{t_1},x_{t_2}\cdots x_{t_n})$ is a nonindentity. Let $x_{t_\nu} \leftarrow b_\nu =b_{h_\nu, \{i_\nu,j_\nu\},E_{\nu}}$, $\nu=1,\ldots,n$ be a nonzero evaluation of $p_1$. The values of $p_1$ are contained in one block, say block $(i_0,j(i_0))$ of the block permutation matrix which corresponds to $g \in G$. On the other hand, by the claim, the evaluations on $p_s$, $s > 1$ are contained in blocks $(i, j(i))$, with $i \neq i_o$. This shows the evaluation of $p$ is nonzero and we are done.

\begin{proof} (claim)

Our setup implies the following:

\begin{enumerate}
\item

$j_1=i_2, j_2=i_3,\ldots,j_{n-1}=i_{n}$

\item

$j_{\sigma(1)}=i_{\sigma(2)}, j_{\sigma(2)}=i_{\sigma(3)}, \ldots, j_{\sigma(n-1)}=i_{\sigma(n)}$

\item

$i_{1}=i_{\sigma(1)}$, $j_{n}=j_{\sigma(n)}$

\item

$t_1=g_{i_1}^{-1}h_{i_1}g_{j_1}, t_2=g_{i_2}^{-1}h_{i_2}g_{j_2}, \ldots, t_n=g_{i_n}^{-1}h_{i_n}g_{j_n}$

\end{enumerate}

Since we are assuming the monomials $Z$ and $Z_{\sigma}$ have the same homogeneous degree in $G$, in order to prove the \textit{good permutation} property we need to show the following condition holds.

If $\sigma(l)=k$ then the values of the products of elements in $G$

$$
g_{i_1}^{-1}h_{i_1}g_{j_1}\cdot g_{i_2}^{-1}h_{i_2}g_{j_2}\cdots g_{i_{k-1}}^{-1}h_{i_{k-1}}g_{j_{k-1}}
$$
and

$$
g_{i_{\sigma(1)}}^{-1}h_{i_{\sigma(1)}}g_{j_{\sigma(1)}}\cdot g_{i_{\sigma(2)}}^{-1}h_{i_{\sigma(2)}}g_{j_{\sigma(2)}}\cdots g_{i_{\sigma(l-1)}}^{-1}h_{i_{\sigma(l-1)}}g_{j_{\sigma(l-1)}}
$$

coincide in $G/H$.

Indeed, ignoring the elements of $H$ and invoking the 1st and 2nd sets of equalities it is sufficient to show

$$
g_{i_1}^{-1}g_{j_{k-1}}=g_{i_{\sigma(1)}}^{-1}g_{j_{\sigma(l-1)}}.
$$

But $g_{i_1}^{-1} =  g_{i_{\sigma(1)}}^{-1}$ by (3) and so it remains to show $g_{j_{k-1}}=g_{j_{\sigma(l-1)}}$.
By $\sigma(l)=k$, (1) and (2) above we have $g_{j_{k-1}}=g_{i_k}=g_{i_{\sigma(l)}}= g_{j_{\sigma(l-1)}}$
and we are done. This proves the claim and completes the proof of the lemma.
\end{proof}
\end{proof}

Now we can introduce the \textit{path property}.

As usual we let $A$ be a $G$-simple algebra with presentation $(H,[c],\mathfrak{g})$, where $H$ is normal and $\mathfrak{g} =((g_{1},\ldots,g_{1}),(g_{2},\ldots,g_{2}),\ldots, (g_{k},\ldots,g_{k}))\in (G^{m_{block}})^{k}\simeq G^{m}$. Let $p(x_{t_1},\ldots,x_{t_n})$ be a $G$-graded \textit{pure} polynomial.
Write

$$p(x_{t_1},\ldots,x_{t_n}) = Z + \sum_{e \neq \sigma \in \Lambda(Z)}\alpha_{\sigma}Z_{\sigma}$$
where $Z=x_{t_1}\cdots x_{t_n}$ and $\Lambda(Z) = \{\sigma \in S_{n}: \sigma$ \textit{good permutation} of $Z$\}.

\begin{defn} (\textit{Path property}). Notation as in lemma \ref{decomposition of identities into Pure polynomials}.

\begin{enumerate}
\item

We say the polynomial $p(x_{t_1},\ldots,x_{t_n})$ satisfies the \textit{path property} if the following condition holds: If $p$ vanishes whenever the evaluation of $x_{t_1}$ has the form $u_{h(i_1)}\otimes e_{i_1,j(i_1)} \otimes E$ for some index $i_1 \in \{1,\ldots,[G:H]\}$ and $E \in M_{r}(F)$, then $p$ is a $G$-graded identity of $A$.

\item
We say the $G$-simple algebra $A$ satisfies the \textit{path property} if any \textit{pure} polynomial satisfies the \textit{path property}.

\end{enumerate}
\end{defn}

Roughly speaking, the condition says that if $p$ is annihilated by choosing an ``evaluation path'', then it vanishes for all $[G:H]$ paths and hence it is a $G$-graded identity.


\begin{prop}\label{equivalence strongly and 1+2+3'}
Let $A$ be a $G$-simple algebra with presentation $(H,[c],\mathfrak{g})$ as above (i.e. $H$ is normal and $\mathfrak{g} =((g_{1},\ldots,g_{1}),(g_{2},\ldots,g_{2}),\ldots, (g_{k},\ldots,g_{k}))\in (G^{m_{block}})^{k}\simeq G^{m}$.) Then $A$ is strongly verbally prime if and only if satisfies the path property.

\end{prop}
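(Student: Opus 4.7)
My plan is to prove the two implications separately, with the forward direction (path property implies strongly verbally prime) being a bookkeeping reduction that leverages primeness of the inner $M_{r}(F)$-factor, and the reverse direction being a Capelli-type construction in the spirit of Proposition \ref{normal subgroup} and Lemma \ref{invariance of cohomology}.

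For the forward direction, suppose $fg\in Id_{G}(A)$ with $f,g$ $G$-homogeneous on disjoint sets of variables. After multilinearization, I would decompose $f=\sum f_{i}$ and $g=\sum g_{j}$ into pure components via Lemma \ref{decomposition of identities into Pure polynomials}. A direct check from the definition of good permutation shows that a product of two pure polynomials on disjoint variable sets is again pure and that distinct pairs $(i,j)$ yield inequivalent pure components, so the pure decomposition of $fg$ is exactly $\{f_{i}g_{j}\}$ and each term lies in $Id_{G}(A)$. It therefore suffices to show that for pure nonidentities $f^{0}$ and $g^{0}$ on disjoint variables, $f^{0}g^{0}$ is also a nonidentity. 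Fix any starting path $i_{1}$; by the contrapositive of the path property, $f^{0}$ has a nonzero value on path $i_{1}$ of the form $u_{h_{f^{0}}}\otimes e_{i_{1},i_{2}}\otimes\tilde{f^{0}}(\vec{M})$ with $i_{2}=\pi_{\deg f^{0}}(i_{1})$ and $\tilde{f^{0}}\colon M_{r}(F)^{n}\to M_{r}(F)$ a nonzero multilinear polynomial map, and the same applies to $g^{0}$ on path $i_{2}$, yielding a nonzero map $\tilde{g^{0}}$. Chaining the two evaluations produces an $f^{0}g^{0}$-value whose inner $M_{r}$-part is $\tilde{f^{0}}(\vec{M})\tilde{g^{0}}(\vec{N})$, and by primeness of $M_{r}(F)$ as an ordinary PI algebra this polynomial map is not identically zero. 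Hence $f^{0}g^{0}\notin Id_{G}(A)$ whenever $f^{0},g^{0}$ are pure nonidentities, and feeding this back to the pure components of $f$ and $g$ forces $f\in Id_{G}(A)$ or $g\in Id_{G}(A)$.

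For the reverse direction, I would assume $A$ is strongly verbally prime and suppose for contradiction that the path property fails, so some pure nonidentity $p$ vanishes on a specific path $i_{1}^{0}$. The strategy is to build a $G$-homogeneous polynomial $f$ on variables disjoint from those of $p$ with (i) $f\notin Id_{G}(A)$, and (ii) every nonzero evaluation of $f$ lies in a band whose ending block index is $i_{1}^{0}$. Given such an $f$, the product $fp$ is a $G$-graded identity of $A$: any nonzero evaluation of $f$ forces the first variable of $p$ to start at block $i_{1}^{0}$, where $p$ vanishes, while evaluations on which $f$ is zero contribute trivially. Since neither $f$ nor $p$ lies in $Id_{G}(A)$, this contradicts strong verbal primeness. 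To build $f$, I would follow the Capelli-type template of Proposition \ref{normal subgroup} and Lemma \ref{invariance of cohomology}: assemble a monomial of the form $Z_{1}w_{1}Z_{2}w_{2}\cdots w_{k-1}Z_{k}\cdot w^{\ast}$ with $e$-segments $Z_{i}$, bridges $w_{j}$ of degree $g_{j}^{-1}g_{j+1}$, and a final bridge $w^{\ast}$ of degree $g_{k}^{-1}g_{i_{1}^{0}}$; alternate over the collected $x$-variables so that the evaluation must span a basis of $A_{e}$; and insert inner Regev central polynomials on the $M_{r}(F)$-factor, as in Lemma \ref{invariance of cohomology}, so as to pin down the unique starting block whose coset chain terminates at $i_{1}^{0}$.

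The principal obstacle I anticipate is exactly this reverse direction: in the equal-multiplicities, normal-$H$ regime, the bare Capelli-plus-bridges monomial admits nonzero evaluations for every starting block $b_{1}$ and therefore takes values in several different bands, not just in $(?,i_{1}^{0})$. Overcoming this requires the Regev polynomial trick of Lemma \ref{invariance of cohomology}, suitably adapted to exploit the $G$-invariance of $[c]$, so that only the single band ending at $i_{1}^{0}$ survives nonzero evaluation while the resulting $f$ still fails to be an identity.
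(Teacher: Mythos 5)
Your forward direction (path property $\Rightarrow$ strongly verbally prime) is essentially the paper's argument: reduce to pure factors via Lemma \ref{decomposition of identities into Pure polynomials} (your observation that the pure decomposition of $fg$ is exactly $\{f_{i}g_{j}\}$ is the same observation the paper makes for $pq$), reduce evaluation on a fixed path to a twisted ungraded polynomial on $M_{r}(F)$ (your $\tilde{f^{0}}$ is the paper's $p^{i}$), use the path property to assert both factors are nonidentities of $M_{r}(F)$, and conclude by verbal primeness of $M_{r}(F)$. This part is fine.

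The reverse direction is where you diverge from the paper and where there is a real gap. You propose to build an auxiliary nonidentity $f$, on variables disjoint from $p$, all of whose nonzero evaluations end in the single $e$-block $i_{1}^{0}$, and you plan to pin down the band with the Regev-insertion trick of Lemma \ref{invariance of cohomology}. But that trick distinguishes the $i$-th and $j$-th $e$-blocks precisely because the conjugated classes $g_{i}([c])$ and $g_{j}([c])$ differ; it is engineered to exploit the \emph{failure} of $G$-invariance. Under the SVP hypothesis you are assuming, Lemma \ref{invariance of cohomology} forces $[c]$ to be $G$-invariant, so all the inserted polynomials $R(g_{i}([c]), m_{block})$ coincide and the mechanism collapses — the obstacle you flag is not one you can ``suitably adapt'' around, it is fatal. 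In fact no such $f$ can exist: SVP plus conditions $(1)+(2)$ gives $G$-invariance (Lemma \ref{invariance of cohomology}), hence conditions $(1)+(2)+(3)$, hence the path property by Proposition \ref{necessity of path property r=1} (and its extension to $r>1$ via Theorem \ref{main theorem2}), and under the path property every pure nonidentity has nonzero values in \emph{every} one of the $[G{:}H]$ bands; so you would be trying to construct an object whose nonexistence is exactly what the surrounding results assert.

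The paper's own route for the reverse implication is both shorter and different from yours: it modifies the scalar coefficients of $p$ itself (not an auxiliary polynomial) to produce, for each path $j$, a polynomial $p_{j}$ that is a nonidentity yet vanishes on path $j$, and then multiplies $p_{1}\cdots p_{[G:H]}$ with disjoint variables — and even then the authors note this direction is not strictly needed and leave the details out, since it follows from the chain SVP $\Rightarrow$ $G$-invariance of $[c]$ $\Rightarrow$ path property. (Concretely: once $[c]$ is $G$-invariant, Proposition \ref{necessity of path property r=1} shows the path-dependent twisting scalars $\gamma_{\sigma}^{(j)}$ are independent of $j$, so vanishing on one path already means vanishing on all paths.) I would recommend replacing your auxiliary-$f$ construction with this shorter argument via the cohomological invariance already established in Lemma \ref{invariance of cohomology}.
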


\begin{proof}
Suppose the \textit{path property} does not hold. Then there exists a $g$-homogeneous \textit{pure} polynomial $p$ which vanishes on all evaluations arising from one path, say $i_1 \in \{1,\ldots,[G:H]\}$ but is a nonidentity of $A$. For any $j \in \{1,\ldots,[G:H]\}$ there exists a suitable \textit{pure} polynomial $p_{j}$ obtained by changing the scalars coefficients of $p$ (depending on the evaluation path $j$) such that $p_j$ is clearly a nonidentitity and vanishes on evaluations corresponding to the $j$th path (see remark below). Taking disjoint variables on different $p_{j}$'s we see that the product $p_1\cdots p_{[G:H]}$ vanishes on all paths and hence is an identity whereas $p_j$, $j=1,\ldots, [G:H]$, is a nonidentity. This shows $A$ is not strongly verbally prime.

\begin{rem}
Since we don't really need the above direction details are omitted.
\end{rem}


For the converse suppose $A$ satisfies the \textit{path property}. We let $p$ and $q$ be multilinear $G$-graded homogeneous polynomial nonidentities of $A$ with disjoint sets of variables. We need to show $p\cdot q$ is a nonidentity.

\begin{claim} We may assume $p$ and $q$ are \textit{pure} polynomials

\end{claim}

\begin{proof}

To see this, let $p= p_{1}+ \cdots +p_{r}$ and $q = q_{1}+ \cdots + q_{s}$ be the decomposition of $p$ and $q$ into its \textit{pure} components. Observe that since $p$ and $q$ are defined on disjoint sets of variables, the decomposition $pq=\sum_{i,j}p_{i}\cdot q_j$ is precisely the decomposition of $pq$ into its \textit{pure} components. Now suppose the sufficiency in the proposition holds for \textit{pure} polynomials and let $p$ and $q$ be multilinear homogeneous nonidentities. We show $pq$ is a nonidentity of $A$. But by Lemma \ref{decomposition of identities into Pure polynomials} this reduces to \textit{pure} polynomials and the claim is proved.

\end{proof}

So let $p$ and $q$ be multilinear, nonidentities \textit{pure} polynomials where $p(x_1,\ldots,x_n) = \sum_{\sigma \in \Delta_{p}}\alpha_{\sigma}Z_{\sigma}$ and $q(y_1,\ldots,y_m) = \sum_{\tau \in \Delta_{q}}\beta_{\tau}W_{\tau}$. Choose $i \in \{1,\ldots,[G:H]\}$ and consider evaluations of $p$ of the form $u_{h} \otimes e_{k,s} \otimes E$ corresponding to the path determined by $i$. Since the monomials of $p$ are \textit{good permutations} of each other, for any $\sigma \in \Delta_{p}$ there is a scalar $\gamma_{\sigma} \in F^{*}$ such that all evaluations corresponding to the $i$th path annihilate $p$ if and only if the ungraded polynomial $p^{i} = \sum_{\sigma \in \Delta_{p}} \gamma_{\sigma}\alpha_{\sigma}Z_{\sigma}$ is a polynomial identity of $M_{r}(F)$. Now, we are assuming the algebra $A$ satisfies the \textit{path property} and hence, since $p$ is a nonidentity, for any path, say the path determined by $i \in \{1,\ldots,[G:H]\}$, there exists an evaluations of the form $u_{h} \otimes e_{k,s} \otimes E$ which does not annihilate $p = \sum_{\sigma}\alpha_{\sigma}Z_{\sigma}$ and hence $p^{i}$ is a nonidentity of $M_{r}(F)$. In order to show $pq$ is a nonidentity we choose an evaluation of $q$ which corresponds to the path $j=j(i)$ determined by the chosen path for $p$ and let $q^{j}$ be the corresponding polynomial. We obtain the product $pq$ is a graded identity if and only if $p^{i}q^{j}$ is an ungraded identity of $M_{r}(F)$. Invoking the \textit{path property} of $A$, we have the polynomial $q^{j}$ is an ungraded nonidentity of $M_{r}(F)$ and so, being both $p^{i}$ and $q^{j}$ nonidentities and $M_{r}(F)$ verbally prime, the product $p^{i}q^{j}$ is a nonidentity and we are done.
\end{proof}

In order to complete the proof the of Theorem \ref{main theorem1} it remains to show that the conditions $1 + 2 + 3$ and $1 + 2 + 3'$  (see paragraph before Definition \ref{Good and Pure}) are equivalent. In fact, since we have seen already that a finite dimensional $G$-simple algebra $A$ which satisfies the condition $1 + 2 + 3'$ is strongly verbally prime, it remains to show $1 + 2 + 3$ implies $1 + 2 +3'$.

At first, we will consider a simpler case, namely we will assume $A=F^{c}H\otimes M_{[G/H]}(F)$, where $H$ is normal in $G$,
$\beta$ is a $G$ invariant cocycle and $[G/H]$ stands for a transversal
of $H$ in $G$ containing $e$ (i.e. each element of $[G/H]$ appears
exactly once in $\mathfrak{g}$, the tuple which provides the elementary grading on $A$).
We need to show condition $3'$ holds.

Claim: It is sufficient to show the following

\begin{prop}\label{necessity of path property r=1}

Notation as above. For every $Z=x_{g_{1}}\cdots x_{g_{n}}$ and
every \textit{good permutation} $\sigma\in S_{n}$, there is a scalar $s\in F^{\times}$
such that $Z-sZ_{\sigma}\in Id_{G}(A)$ $(Z_{\sigma}=x_{g_{\sigma(1)}}\cdots x_{g_{\sigma(n)}})$.

\end{prop}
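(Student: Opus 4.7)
The plan is to fix a multilinear basis evaluation $x_{g_\nu} \mapsto u_{h_\nu} \otimes e_{i_\nu, j_\nu}$ (which suffices by multilinearity) of $Z$ on $A = F^{c}H \otimes M_{[G:H]}(F)$, compare the values of $Z$ and $Z_\sigma$ on it, and exhibit a single scalar $s \in F^{*}$---depending only on $(g_1,\ldots,g_n)$ and $\sigma$---such that $Z$ evaluates to $s$ times $Z_\sigma$ on every evaluation. For the matrix product $e_{i_1,j_1}\cdots e_{i_n,j_n}$ to be nonzero one must have $j_\nu = i_{\nu+1}$; since $H$ is normal, the entire ``path'' $i_1, i_2, \ldots, i_{n+1}=j_n$ is forced by the starting index $i_1$ via $g_{i_{\nu+1}}H = g_{i_\nu} g_\nu H = g_{i_1}(g_1\cdots g_\nu)H$, and the elements $h_\nu = g_{i_\nu} g_\nu g_{i_{\nu+1}}^{-1} \in H$ are then also determined.

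Next I would apply the good permutation equality $\overline{g_1\cdots g_{\sigma(j)}} = \overline{g_{\sigma(1)}\cdots g_{\sigma(j)}}$ at three specific values of $j$. At $j=1$ it yields $\overline{g_1\cdots g_{\sigma(1)-1}} = \bar e$, which forces $g_{i_{\sigma(1)}} = g_{i_1}$ (the permuted monomial starts in the same row). Combining the equality at $j=\nu$ and $j=\nu+1$ yields $\overline{g_1\cdots g_{\sigma(\nu)}} = \overline{g_1\cdots g_{\sigma(\nu+1)-1}}$, which translates into $g_{j_{\sigma(\nu)}} = g_{i_{\sigma(\nu+1)}}$---exactly the matching condition needed to make the permuted elementary matrix product nonzero on the same evaluation. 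At $j=n$ (together with condition (1) of good permutation) it forces $j_{\sigma(n)} = j_n$. Hence both $Z$ and $Z_\sigma$ evaluate in the same one-dimensional subspace $F\cdot u_{h_1\cdots h_n}\otimes e_{i_1,j_n}$ of $A_g$, where $h_1\cdots h_n = g_{i_1}(g_1\cdots g_n) g_{j_n}^{-1} = h_{\sigma(1)}\cdots h_{\sigma(n)}$ by condition (1); their ratio is the scalar $\alpha_{\mathfrak{h},\sigma} = c(\mathfrak{h})/c(\mathfrak{h}_\sigma)$ attached to the binomial identity of $F^{c}H$.

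The main obstacle is to show $\alpha_{\mathfrak{h},\sigma}$ is independent of $i_1$. Switching to another starting index $i_1'$ and setting $\gamma = g_{i_1'} g_{i_1}^{-1}$, the new tuple $\mathfrak{h}'$ is not literally $\gamma \mathfrak{h} \gamma^{-1}$, because $\gamma g_{i_\nu}$ need not be a transversal representative and must be corrected by an element of $H$. The $G$-invariance of $[c]$ supplies a $1$-cochain $f\colon H \to F^{*}$ with $c(\gamma h_1 \gamma^{-1}, \gamma h_2 \gamma^{-1}) = f(h_1)f(h_2)f(h_1h_2)^{-1} c(h_1,h_2)$; a standard $n$-fold induction then yields $c(\gamma \mathfrak{h} \gamma^{-1})/c(\mathfrak{h}) = \bigl(\prod_\nu f(h_\nu)\bigr)/f(h_1\cdots h_n)$. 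In the ratio $\alpha_{\gamma \mathfrak{h} \gamma^{-1}, \sigma}/\alpha_{\mathfrak{h}, \sigma}$ the single-variable factors $f(h_\nu)$ cancel between numerator and denominator, and the remaining $f$-terms collapse because condition (1) of good permutation gives $h_1\cdots h_n = h_{\sigma(1)}\cdots h_{\sigma(n)}$. The residual contribution of the transversal-correction factors is handled by a parallel telescoping argument using the same invariance of the total product; this is the technically most delicate point, and it is precisely where both normality of $H$ and $G$-invariance of $[c]$ are used in tandem.

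Combining the above yields a unique $s \in F^{*}$ with $Z = sZ_\sigma$ on every basis evaluation (both vanish simultaneously when the matching condition fails), and therefore $Z - sZ_\sigma \in Id_G(A)$.
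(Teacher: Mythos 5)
Your outline is structurally parallel to the paper's: you reduce to basis evaluations, observe that once a starting row $i_1$ is chosen the whole path $(i_\nu, j_\nu, h_\nu)$ is forced, use the good-permutation conditions to show $Z$ and $Z_\sigma$ are simultaneously nonzero and land in the same one-dimensional block, and correctly identify the ratio as the binomial scalar $\alpha_{\mathfrak h,\sigma}$ attached to the $H$-tuple $\mathfrak h = (h_1,\ldots,h_n)$. The remaining task --- showing $\alpha_{\mathfrak h,\sigma}$ is independent of the starting index --- is exactly where the content of the proposition lies, and here the proposal has a genuine gap. You correctly flag that the new tuple $\mathfrak h'$ obtained from $i_1'$ is \emph{not} literally $\gamma\mathfrak h\gamma^{-1}$ with $\gamma = g_{i_1'}g_{i_1}^{-1}$, because the transversal $[G/H]$ is not stable under left multiplication by $\gamma$, so each $h'_\nu$ differs from $\gamma h_\nu\gamma^{-1}$ by $H$-valued correction factors. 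But you then dispatch this --- which you yourself call ``the technically most delicate point'' --- with the sentence that it ``is handled by a parallel telescoping argument using the same invariance of the total product.'' That sentence is not a proof; it is precisely the claim that needs to be established, and it is the bulk of the paper's argument.

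Concretely, the paper handles this by decomposing $h_{i,t} = \alpha_{i-1,t}\,f_i^t\,\alpha_{i,t}^{-1}$, where the $f_i\in H$ are genuinely $t$-independent and the $\alpha_{i,t}\in H$ are the transversal-correction factors. Two lemmas then do the work: (i) the products of $f_i^t$ and of $f_{\sigma(i)}^t$ agree, and (ii) the identity $\alpha_{\sigma(i)-1,t} = \alpha_{\sigma(i-1),t}$, which is what makes the $\alpha$-corrections cancel \emph{identically} between the $Z$-product and the $Z_\sigma$-product once the cocycle relation is applied term by term. Only after this cancellation does the ratio reduce to $s(f_1^t,\ldots,f_n^t)$, at which point $G$-invariance of $[c]$ (applied to the $t$-independent $f_i$'s, not directly to the $h_\nu$'s) gives independence of $t$. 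Your coboundary computation --- the cancellation of the $f(h_\nu)$'s and the collapse of $f(h_1\cdots h_n)$ against $f(h_{\sigma(1)}\cdots h_{\sigma(n)})$ --- is correct but only applies to the idealized case $\mathfrak h' = \gamma\mathfrak h\gamma^{-1}$; without something playing the role of the paper's telescoping Lemma ($\alpha_{\sigma(i)-1,t} = \alpha_{\sigma(i-1),t}$), the residual factors are not known to cancel, and the argument does not close. To repair the proposal you would need to exhibit the correction factors explicitly, show they enter both $\alpha_{\mathfrak h,\sigma}$ and $\alpha_{\mathfrak h',\sigma}$ in a matched telescoping pattern, and prove the matching --- at which point you will have reproduced the paper's $\alpha_{i,t}/f_i$ decomposition in some form.
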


\begin{acknowledgment}
We are thankful to Darrell Haile for his contribution to the proof of this important proposition.
\end{acknowledgment}

Let us prove condition $3'$ assuming the proposition. Let $p(x_{g_1},\ldots,x_{g_n}) = \sum_{\sigma \Lambda}\beta_{\sigma}Z_{\sigma}$ be a \textit{pure} polynomial. Note that $\beta_{\sigma}$ may be zero. We need to show that if $p$ vanishes on one path, it vanishes on every path and hence it is a $G$-graded identity. Clearly, we may assume $\beta_{e} = 1$. We proceed by induction on the number $k_{0}$ of nonzero coefficients $\beta_{\sigma}$ in $p$ and so by way of contradiction we assume $p$ is a minimal counter example. Note that $k_{0} > 1$ since any multilinear monomial does not vanish on any path. Suppose $k_{0} \geq 3$ and let $p = Z + \beta_{\sigma_1}Z_{\sigma_1} + \beta_{\sigma_2}Z_{\sigma_2}+ \sum_{\tau \in \Lambda \setminus \{e, \sigma_{1}, \sigma_{2}\}} \beta_{\tau}Z_{\tau}$, $\beta_{\sigma_1}, \beta_{\sigma_2} \neq 0$. Let $s = s_{\sigma_1}$ be such that $Z-sZ_{\sigma}$ is a $G$-graded identity. It follows that $(\beta_{\sigma_1} -s)Z_{\sigma_1} +  \sum_{\tau \in \Lambda \setminus \{e, \sigma_{1}, \sigma_{2}\}} \beta_{\tau}Z_{\tau}$ doesn't satisfy the \textit{path property}. Contradiction to the minimality of $p$. So we need to show the path property is satisfied by \textit{pure} polynomials of the form $p = Z + \beta_{\sigma}Z_{\sigma}$ where $\beta_{\sigma}\neq 0$. But again, if $\beta_{\sigma} \neq s$, $p$ does not vanish on any evaluation path and the claim follows.

\begin{proof} (of Proposition)

Write $\hat{g}(i)$ for $[g_{1}\cdots g_{i}]$ and $\hat{g}(0)=e$. Here $[g]\in[G/H]$ is the corresponding element to $g\in G$.

Let $Z = x_{g_1}\cdots x_{g_n}$ and $Z = x_{g_{\sigma(1)}}\cdots x_{g_{\sigma(n)}}$ be \textit{good permutations} of each other. Then $g_{\sigma(1)}\cdots g_{\sigma(n)}=g_{1}\cdots g_{n}$
and
\begin{equation}
\hat{g}(\sigma(i))=\widehat{g_{\sigma}}(i):=[g_{\sigma(1)}\cdots g_{\sigma(i)}]\label{eq:1}
\end{equation}
 for $i=1,\ldots,n$. By setting $\sigma(0)=0$ we get equivalently
\begin{equation}
\hat{g}(\sigma(i)-1)=\widehat{g_{\sigma}}(i-1)\label{eq:2}
\end{equation}
 for $i=1,\ldots,n$.

Fix $t\in[G/H]$. For $i=1,\ldots,n$ we evaluate $x_{g_{i}}$ by $u_{[t\hat{g}(i-1)]g_{i}[t\hat{g}(i)]^{-1}}\otimes e_{[t\hat{g}(i-1)],[t\hat{g}(i)]}$
and obtain that $Z$, under this substitution, is equal to
\[
\left(\prod_{i=1}^{n}u_{[t\hat{g}(i-1)]g_{i}[t\hat{g}(i)]^{-1}}\right)\otimes e_{t,[t\hat{g}(n)]}.
\]
Next, let $\alpha_{i,t}\in H$ ($i=0,\ldots,n$) be such that
\[
[t\hat{g}(i)]=\alpha_{i,t}t[\hat{g}(i)].
\]
So
\begin{multline}
h_{i,t}:=[t\hat{g}(i-1)]g_{i}[t\hat{g}(i)]^{-1}= \\ = \alpha_{i-1,t}\cdot t\cdot\underset{f_{i}\in H}{\underbrace{\left([\hat{g}(i-1)]g_{i}[\hat{g}(i)]^{-1}\right)}}\cdot t^{-1}\cdot\alpha_{i,t}^{-1}
=\alpha_{i-1,t}f_{i}^{t}\alpha_{i,t}^{-1},
\label{eq:usefull}
\end{multline}
where
$\alpha_{0,t}=e$ and $\alpha_{\sigma(n),t}=[t\hat{g}(\sigma(n)][\hat{g}(\sigma(n)]^{-1}t^{-1}=[t\hat{g}(n)][\hat{g}(n)]^{-1}t^{-1}=\alpha_{n,t}$.

\begin{lem}
Every $f_{i}$ is independent of $t$ and we have the equality
$$
f_{1}^{t}\cdots f_{n}^{t}=f_{\sigma(1)}^{t}\cdots f_{\sigma(n)}^{t}.
$$
\end{lem}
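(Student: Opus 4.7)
The first assertion is essentially by definition: the element
$$f_i = [\hat{g}(i-1)]\, g_i\, [\hat{g}(i)]^{-1}$$
is built solely from $g_1,\ldots,g_n$ and the fixed transversal $[G/H]$; the variable $t$ enters only through the outer conjugation when we pass from $f_i$ to $f_i^t$. So the plan for part one is simply to point back at the definition of $f_i$ and observe that $t$ does not occur.

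For the second assertion, the plan is to reduce the identity to an ``untwisted'' one by stripping off the conjugation by $t$. Since $H$ is normal in $G$, conjugation by $t$ is a group automorphism of $H$, so it suffices to prove
$$f_1 f_2 \cdots f_n \;=\; f_{\sigma(1)} f_{\sigma(2)} \cdots f_{\sigma(n)}$$
in $H$. Both sides will be evaluated by telescoping.

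For the left-hand side the computation is immediate: consecutive factors $[\hat{g}(i)]^{-1} [\hat{g}(i)]$ cancel, leaving
$$f_1 \cdots f_n = [\hat{g}(0)]\, g_1 g_2 \cdots g_n\, [\hat{g}(n)]^{-1} = g_1 \cdots g_n\, [\hat{g}(n)]^{-1},$$
using $[\hat{g}(0)] = e$. For the right-hand side the key input is that $\sigma$ is a good permutation, so by \eqref{eq:1} and \eqref{eq:2} we may rewrite
$$f_{\sigma(i)} = [\hat{g}(\sigma(i)-1)]\, g_{\sigma(i)}\, [\hat{g}(\sigma(i))]^{-1} = [\widehat{g_\sigma}(i-1)]\, g_{\sigma(i)}\, [\widehat{g_\sigma}(i)]^{-1}.$$
Telescoping as before gives
$$f_{\sigma(1)} \cdots f_{\sigma(n)} = g_{\sigma(1)} \cdots g_{\sigma(n)}\, [\widehat{g_\sigma}(n)]^{-1}.$$

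Finally, the definition of good permutation (condition (1) in Definition \ref{Good and Pure}) gives $g_1 \cdots g_n = g_{\sigma(1)} \cdots g_{\sigma(n)}$, and in particular $[\hat{g}(n)] = [\widehat{g_\sigma}(n)]$, so the two telescoped expressions coincide. Applying the automorphism $x \mapsto t x t^{-1}$ of $H$ to both sides then yields the stated equality $f_1^t \cdots f_n^t = f_{\sigma(1)}^t \cdots f_{\sigma(n)}^t$.

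There is no real obstacle here beyond bookkeeping; the whole argument is a telescoping calculation combined with the two defining relations of a good permutation. The only subtle point is recognising that condition (2) in Definition \ref{Good and Pure}, rewritten as \eqref{eq:1}--\eqref{eq:2}, is exactly what makes the right-hand telescope cleanly along the reordered sequence $(g_{\sigma(1)},\ldots,g_{\sigma(n)})$ rather than along $(g_1,\ldots,g_n)$.
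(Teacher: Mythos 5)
Your proof is correct and follows essentially the same telescoping argument as the paper, using \eqref{eq:1} and \eqref{eq:2} to re-express the brackets in $f_{\sigma(i)}$ so the product collapses. The only cosmetic difference is that you telescope along $\widehat{g_\sigma}(0),\ldots,\widehat{g_\sigma}(n)$ (rewriting both brackets of $f_{\sigma(i)}$ via \eqref{eq:1}--\eqref{eq:2}) and then restore the conjugation by $t$ at the end, whereas the paper keeps the superscript $t$ throughout and telescopes along $\hat g(\sigma(0)),\ldots,\hat g(\sigma(n))$; these are the same computation.
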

\begin{proof}
By \ref{eq:1} and \ref{eq:2},
\[
\hat{g}(\sigma(i-1))=\widehat{g_{\sigma}}(i-1)=\hat{g}(\sigma(i)-1)
\]
for $i=1,\ldots,n$. Hence,
\begin{eqnarray*}
f_{\sigma(1)}^{t}\cdots f_{\sigma(n)}^{t} & = & \left(\prod_{i=1}^{n}[\hat{g}(\sigma(i)-1)]g_{\sigma(i)}[\hat{g}(\sigma(i))]^{-1}\right)^{t}\\
 & = & \left(\prod_{i=1}^{n}[\hat{g}(\sigma(i-1))]g_{\sigma(i)}[\hat{g}(\sigma(i))]^{-1}\right)^{t}=\left(g_{\sigma(1)}\cdots g_{\sigma(n)}\cdot[\hat{g}(n)^{-1}]\right)^{t}\\
 & = & (g_{1}\cdots g_{n}^{-1}\cdot[\hat{g}(n)^{-1}])^{t}=f_{1}^{t}\cdots f_{n}^{t}.
\end{eqnarray*}
\end{proof}
\begin{lem}
For $i=1,\ldots,n$
\begin{equation}
\alpha_{\sigma(i)-1,t}=\alpha_{\sigma(i-1),t}\label{eq:4}
\end{equation}

\end{lem}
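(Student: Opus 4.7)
The plan is to derive this identity directly from the two displayed coset equations \ref{eq:1} and \ref{eq:2} (plus the normality of $H$), after isolating the observation that $\alpha_{j,t}$ is really a function of the coset representative $\hat{g}(j)$ alone (with $t$ held fixed).

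First I would record the reformulation of the defining relation for $\alpha_{j,t}$. Rearranging $[t\hat{g}(j)] = \alpha_{j,t}\, t[\hat{g}(j)]$ yields
\[
\alpha_{j,t} \;=\; [t\hat{g}(j)] \cdot \bigl(t[\hat{g}(j)]\bigr)^{-1},
\]
so for fixed $t$ the element $\alpha_{j,t}$ depends only on $\hat{g}(j)$. Consequently, to prove $\alpha_{\sigma(i)-1,t}=\alpha_{\sigma(i-1),t}$ it is enough to show the coset equality $\hat{g}(\sigma(i)-1)=\hat{g}(\sigma(i-1))$.

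Next I would combine \ref{eq:1}, applied with the index $i$ replaced by $i-1$, which gives $\hat{g}(\sigma(i-1))=\widehat{g_{\sigma}}(i-1)$, with \ref{eq:2}, which gives $\hat{g}(\sigma(i)-1)=\widehat{g_{\sigma}}(i-1)$. Transitivity yields
\[
\hat{g}(\sigma(i)-1)\;=\;\hat{g}(\sigma(i-1))
\]
for $i=1,\ldots,n$ (the boundary case $i=1$ being handled by the convention $\sigma(0)=0$, so that $\widehat{g_{\sigma}}(0)=e=\hat{g}(0)$). Feeding this equality into the previous paragraph, $\alpha_{\sigma(i)-1,t}=\alpha_{\sigma(i-1),t}$ follows immediately.

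There is essentially no obstacle here: the content of the lemma is purely bookkeeping, and the only nontrivial ingredient is \ref{eq:2}, which in turn is a direct consequence of \ref{eq:1} together with the normality of $H$ (needed so that $[xg]=[yg]$ forces $[x]=[y]$). If any subtlety arises, it will be in checking that the convention $\sigma(0)=0$ is used consistently at the boundary $i=1$, but this is routine.
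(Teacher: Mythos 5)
Your proof is correct and follows essentially the same route as the paper: observe that $\alpha_{j,t}=[t\hat g(j)]\bigl(t[\hat g(j)]\bigr)^{-1}$ depends (for fixed $t$) only on $\hat g(j)$, combine \ref{eq:1} (shifted to $i-1$) with \ref{eq:2} to obtain $\hat g(\sigma(i)-1)=\widehat{g_\sigma}(i-1)=\hat g(\sigma(i-1))$, and conclude. One small inaccuracy in your closing remark: normality of $H$ is not what makes $[xg]=[yg]$ imply $[x]=[y]$ — for right cosets $Hx$ this cancellation holds for any subgroup, since $xg\in Hyg\Leftrightarrow x\in Hy$. So \ref{eq:2} follows from \ref{eq:1} without invoking normality; this does not affect the validity of your argument for the lemma itself, which takes \ref{eq:1} and \ref{eq:2} as given.
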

As a result,
\begin{equation}
h_{\sigma(i),t}=\alpha_{\sigma(i-1),t}f_{\sigma(i)}^{t}\alpha_{\sigma(i),t}^{-1}\label{eq:5}
\end{equation}

\begin{proof}
We saw in the previous proof that $\hat{g}(\sigma(i-1))=\hat{g}(\sigma(i)-1)$
for $i=1,\ldots,n$. Thus,
\[
\alpha_{\sigma(i-1),t}=[tg(\sigma(i-1))][g(\sigma(i-1))]^{-1}t^{-1}=[tg(\sigma(i)-1)][g(\sigma(i)-1)]^{-1}t^{-1}=\alpha_{\sigma(i)-1,t}.
\]
 Thus,
\[
h_{\sigma(i),t}=\alpha_{\sigma(i)-1,t}f_{\sigma(i)}^{t}\alpha_{\sigma(i),t}^{-1}=\alpha_{\sigma(i-1),t}f_{\sigma(i)}^{t}\alpha_{\sigma(i),t}^{-1}.
\]
\end{proof}
Next, for $i=1,\ldots,n$
\[
u_{h_{i,t}}=u_{\alpha_{i-1,t}f_{i}^{t}\alpha_{i,t}^{-1}}=\frac{1}{c(\alpha_{i-1,t},f_{i}^{t})c(\alpha_{i-1,t}f_{i}^{t},\alpha_{i,t}^{-1})}\cdot u_{\alpha_{i-1,t}}\cdot u_{f_{i}^{t}}\cdot u_{\alpha_{i,t}^{-1}}.
\]
As a result,
\[
u_{h_{1,t}}\cdots u_{h_{n,t}}=\left(\prod_{i=1}^{n}\frac{1}{c(\alpha_{i-1,t},f_{i}^{t})c(\alpha_{i-1,t}f_{i}^{t},\alpha_{i,t}^{-1})}\right)\left(\prod_{i=1}^{n}c(\alpha_{i,t},\alpha_{i,t}^{-1})\right)\cdot u_{f_{1}^{t}}\cdots u_{f_{n}^{t}}\cdot u_{\alpha_{n,t}^{-1}}.
\]
Using \ref{eq:5} and \ref{eq:usefull} we compute

\bigskip

$u_{\sigma(h_{1,t})}\cdots u_{\sigma(h_{n,t})}=$

$$\left(\prod_{i=1}^{n}\frac{1}{c(\alpha_{\sigma(i-1),t},f_{\sigma(i)}^{t})c(\alpha_{\sigma(i-1),t}f_{\sigma(i)}^{t},\alpha_{\sigma(i),t}^{-1})}\right)\left(\prod_{i=1}^{n}c(\alpha_{\sigma(i),t},\alpha_{\sigma(i),t}^{-1})\right)
\cdot u_{f_{\sigma(1)}^{t}}\cdots u_{f_{\sigma(n)}^{t}}\cdot u_{\alpha_{n,t}^{-1}}.
$$

\bigskip

Hence, for every $t\in[G/H]$, we have
\[
u_{h_{1,t}}\cdots u_{h_{n,t}}=s(f_{1}^{t},\ldots,f_{n}^{t})\cdot u_{\sigma(h_{1,t})}\cdots u_{\sigma(h_{n,t})},
\]
where $s(f_{1}^{t},\ldots,f_{n}^{t})\in F^{\times}$ satisfies
\[
u_{f_{1}^{t}}\cdots u_{f_{n}^{t}}=s(f_{1}^{t},\ldots,f_{n}^{t})\cdot u_{f_{\sigma(1)}^{t}}\cdots u_{f_{'\sigma(n)}^{t}}.
\]
However, since we assumed that $[c]$ is invariant under $G$, we
must have that
\[
s(f_{1}^{t},\ldots,f_{n}^{t})=s(f_{1},\ldots,f_{n}).
\]

This completes the proof of the proposition and hence the implication $1 +2 +3 \Rightarrow 1 +2 +3'$ in case $r=1$.
\end{proof}

\section{\label{sec:division-algebras}$G-$graded division algebras}

In this section we prove

1) Theorem \ref{main theorem2}.

2) $1 + 2 + 3 \Rightarrow 1 + 2 +3'$ where $r$ is arbitrary (Theorem \ref{main theorem1'}).

3) Theorem \ref{nondisjoin variables}.

\begin{proof} (of \ref{main theorem2})
One direction is easy. Suppose such $B$ exists. Since $A$ and $B$ have the same $G$-graded identities it is sufficient to show that products of multilinear nonidentities of $B$ with disjoint sets of variables is a nonidentity of $B$. But this is clear since $p$ and $q$ have nonzero (invertible) evaluations on $B$ and hence also their product $pq$.

We proceed now to show that such a $B$ exists provided the algebra $A$ is $G$-graded strongly verbally prime. We will show that the corresponding \textit{generic} $G$-simple algebra is a $G$-graded division algebra.

We consider an affine relatively free $G$-graded algebra $U_{G} = F\langle X_{G} \rangle /Id_{G}(A)$ corresponding to $A$ with $Id_{G}(U_{G}) = Id_{G}(A)$ (it is known that such algebra exists (see subsection 7.1, \cite{AB})). Here $X_{G}$ is a finite set of $G$-graded variables. We claim $U_{G}$ is semisimple, that is the Jacobson radical $J=J(U_{G})$ is zero. Indeed, we know by Braun-Kemer-Razmyslov theorem $J$ is nilpotent. Moreover, the Jacobson radical is $G$-graded (see \cite{Cohen-Montgomery}) and hence this would contradict the strongly verbally prime condition on $A$ unless $J=0$.

Let $Z(U)$ denote the center of $U$ and $Z(U)_{e}$ the central $e$-homogeneous elements. Note: the center $Z(U)$ is not graded in general.

\begin{lem}

1) There exist $e$-homogeneous central polynomials, that is $Z(U)_{e}\neq 0$.

2) $Z(U)_{e}$ has no nontrivial zero divisors as elements of $U$.

\end{lem}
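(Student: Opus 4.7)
For part~(1), the plan is to exhibit a nonzero $G$-graded polynomial $p \in F\langle X_G\rangle$ of total $G$-degree $e$ which is a central polynomial of $A$ but not a $G$-graded identity; its class modulo $Id_G(A)$ then gives a nonzero element of $Z(U)_e$. The existence of such $e$-homogeneous central polynomials for a finite dimensional $G$-simple algebra satisfying $Z(A)_e = F\cdot 1_A$ is a standard result of $G$-graded PI theory (compare the construction in \cite{AB}): one starts from a classical Razmyslov--Formanek central polynomial for $M_n(F)$ with $n$ sufficiently large, substitutes its variables by $G$-homogeneous variables whose degrees multiply to $e$, alternates the substituted variables over a basis of $A_e$, and verifies that the resulting polynomial evaluates into the one-dimensional subspace $Z(A)\cap A_e = F\cdot 1_A$ while remaining a $G$-graded nonidentity on $A$.

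For part~(2), the plan is to combine strongly verbally prime with a variable-renaming trick. Let $z\in Z(U)_e$ be nonzero and suppose $u\in U$ satisfies $zu=0$. Decomposing $u=\sum_{g\in G}u_g$, the fact that $z$ is $e$-homogeneous gives $zu_g \in U_g$, so $zu=0$ forces $zu_g=0$ for each $g$; it is therefore enough to treat the case in which $u$ itself is $G$-homogeneous. Fix representatives $\tilde z,\tilde u \in F\langle X_G\rangle$, both $G$-homogeneous, with $\tilde z\tilde u \in Id_G(A)$. Apply the graded endomorphism of $F\langle X_G\rangle$ which sends every variable occurring in $\tilde z$ to a fresh variable of the same degree, and every variable occurring in $\tilde u$ to a different fresh variable of the same degree; this produces $\tilde z'\tilde u' \in Id_G(A)$ with $\tilde z'$ and $\tilde u'$ $G$-homogeneous on disjoint sets of variables. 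Since $\Gamma=Id_G(A)$ is strongly verbally prime, either $\tilde z'\in \Gamma$ or $\tilde u'\in \Gamma$. Renaming variables by a bijection preserves $G$-graded identity status, so this reads $z=0$ or $u=0$; the former is excluded by hypothesis, hence $u=0$.

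The main obstacle is part~(1): producing an $e$-homogeneous graded central polynomial for a $G$-simple algebra is the delicate ingredient and requires a nontrivial construction (or an appeal to the existing literature on $G$-graded central polynomials). Part~(2), by contrast, is a direct translation of strongly verbal primeness into a zero-divisor statement, once one reduces to the graded homogeneous case and uses a bijective renaming of variables to enforce the disjointness hypothesis required by the strongly verbally prime condition.
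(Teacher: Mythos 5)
Your proposal for part~(1) is a reasonable sketch that, like the paper, ultimately appeals to the existing literature (the paper cites \cite{Karasik}, Theorem 3.12); I will not quarrel with that.

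For part~(2) there is a genuine gap. Your key step is to ``apply the graded endomorphism of $F\langle X_G\rangle$ which sends every variable occurring in $\tilde z$ to a fresh variable of the same degree, and every variable occurring in $\tilde u$ to a different fresh variable of the same degree.'' This is not an endomorphism when $\tilde z$ and $\tilde u$ share a variable $x$: a graded endomorphism of the free algebra is determined by a \emph{single} choice of image for $x$, and cannot send the occurrences of $x$ inside $\tilde z$ to one fresh variable $x'$ and the occurrences inside $\tilde u$ to a different fresh variable $x''$. Consequently $\tilde z' \tilde u'$ need not lie in $Id_G(A)$ even though $\tilde z\tilde u$ does; membership in a $T$-ideal is only preserved by genuine substitutions. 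And the shared-variable case is exactly the content of the statement: if representatives with disjoint variables could always be chosen, the assertion would be immediate from the strongly verbally prime condition and there would be nothing to prove. (Indeed, the very reason the paper proves Theorem~\ref{nondisjoin variables} separately, \emph{after} Theorem~\ref{main theorem2}, is that dropping the disjointness hypothesis requires more; invoking it here would be circular.)

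The paper's argument is different in kind and uses the centrality of $z$ in an essential way, which your proposal never touches. After reducing $p=\tilde z$ and $q=\tilde u$ to multihomogeneous polynomials, one observes that $p$ takes values in $Z(A)_e = F\cdot 1_A$, so $p(a)q(a)=0$ for $a\in A^n$ is equivalent to ``$p(a)=0$ or $q(a)=0$.'' Then one compares the Zariski-open nonempty (hence dense, as $A^n\cong\mathbb{A}^d$ is irreducible) set $\mathfrak{A}_p=\{a: p(a)\neq 0\}$ with the Zariski-closed proper set $\mathfrak{B}_q=\{a: q(a)=0\}$; the inclusion $\mathfrak{A}_p\subseteq\mathfrak{B}_q$ is impossible, so $pq\notin Id_G(A)$. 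Note that this Zariski argument does not need disjointness of variables at all, which is precisely the point.
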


\begin{proof}
Both statements were proved in \cite{Karasik} (see Theorem $3.12$, Lemma $5.4$). Nevertheless, we recall the proof of the second statement since a similar argument will be used later. Let $p$ be a central $e$-homogeneous nonidentity and $q$ an arbitrary homogeneous nonidentity. Note that we are not assuming further conditions on $p$ and $q$, and in particular no assumption was made on the sets of variables of $p$ and $q$. We claim, we may assume that $p$ and $q$ are \textit{multihomogeneous} (each variable appears in any monomial with the same multiplicity). To see this put an ordering on the variables which appear either in $p$ or $q$. Counting variables multiplicities we order lexicographically the monomials in $p$ and $q$ respectively. Let $p=p_1+\ldots +p_r$ and $q=q_1+\dots +q_s$ be the decomposition of $p$ and $q$ into its multihomogeneous constituents where $p_1 < \ldots < p_r$ and $q_1 < \ldots < q_s$. Suppose $pq$ is a $G$-graded identity. It follows that its homogeneous constituent are $G$-graded identities and in particular $p_1q_1$ is a $G$-graded identity. This proves the claim.

Let $\mathfrak{V}_{p}$ and $\mathfrak{V}_{q}$ denote the set of variables appearing in $p$ or $q$ respectively. Let $\mathfrak{V} = \mathfrak{V}_{p} \cup \mathfrak{V}_{q}$ and let $n =card(\mathfrak{V})$. An evaluation of $p$ (or $q$) is a choice of an element in $A^{n} = F^{d}$, where $d=\dim_{F}(A)\times n$. Let $\mathfrak{A}_{p} = \{z \in A^{n}: p(z) \neq 0\}$ and $\mathfrak{B}_{q} = \{z \in A^{n}: q(z) = 0\}$. Note that $\mathfrak{A}_{p}$ is an open nonempty (and hence dense) subset of $A^{n}$ in the Zariski topology whereas $\mathfrak{B}_{q}$ is a closed set $\neq A^{n}$. Since $p$ take values in $Z(A)_{e} = F$, $pq$ being an identity is equivalent to $\mathfrak{A}_{p} \subseteq \mathfrak{B}_{q}$ which is impossible. This proves the second part of the lemma.
\end{proof}

Consider the central localization $\mathcal{A}_{G}=(Z(U)_{e}-\{0\})^{-1}U_{G}$. We obtain that $Z(\mathcal{A}_{G})_{e} = Frac(Z(U)_{e})$ is a field and we denote it by $K$.

Invoking \cite{Karasik}, Corollary 4.7 (Posner's theorem for $G$-graded algebras), we obtain a finite dimensional algebra $G$-simple over $K$. We need to show the $e$-component $(\mathcal{A}_{G})_e$ is an (ordinary) division algebra. Since the $e$-component is finite dimensional over $K$ we need to show there are no nontrivial zero divisors in $(\mathcal{A}_{G})_{e}$. Suppose this is not the case and let $p$ and $q$ be $e-$polynomials, nonidentities, whose product is zero (obtained by clearing denominators).

\begin{lem}
We may assume the polynomials $p$ and $q$ are multihomogeneous.

\end{lem}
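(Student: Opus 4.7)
The plan is to run the standard multihomogenization argument available over infinite fields, but with a twist compared to the earlier lemma in this subsection: since $p$ and $q$ may share variables, we cannot simply extract the lex-smallest multihomogeneous pieces $p_1, q_1$ and claim $p_1 q_1$ is isolated as a multihomogeneous component of $pq$. Instead, we will select the lex-smallest \emph{nonidentity} pieces and use the $T$-ideal structure of $Id_G(A)$ to kill all unwanted cross terms.

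First I would fix a total ordering on the finite set of variables occurring in either $p$ or $q$, and use the induced lexicographic ordering on multi-degree vectors $\alpha \in \mathbb{Z}_{\geq 0}^N$; this order is total and additive, i.e.\ $\alpha < \alpha'$ implies $\alpha+\beta < \alpha'+\beta$. Writing $p = \sum_\alpha p_\alpha$ and $q = \sum_\beta q_\beta$ as sums of their multihomogeneous components, I would invoke the usual Vandermonde argument (rescaling each variable $x_i$ by a scalar $t_i \in F$ and using that $F$ is infinite) to conclude that $Id_G(A)$ is closed under passing to multihomogeneous components. In particular, each component of $pq$ is in $Id_G(A)$, and since $p, q \notin Id_G(A)$, the sets
\[
S_p = \{\alpha : p_\alpha \notin Id_G(A)\}, \qquad S_q = \{\beta : q_\beta \notin Id_G(A)\}
\]
are both nonempty. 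Note also that each $p_\alpha, q_\beta$ remains $e$-homogeneous with respect to the $G$-grading, since the multihomogeneous decomposition refines the $G$-homogeneous decomposition.

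Set $\alpha^* = \min_{\mathrm{lex}} S_p$ and $\beta^* = \min_{\mathrm{lex}} S_q$. The multihomogeneous component of $pq$ of multi-degree $\alpha^* + \beta^*$ equals $\sum_{\alpha+\beta = \alpha^*+\beta^*} p_\alpha q_\beta$ and lies in $Id_G(A)$. For any pair $(\alpha,\beta)\neq (\alpha^*,\beta^*)$ with $\alpha+\beta=\alpha^*+\beta^*$, additivity of the lex order forces either $\alpha<\alpha^*$ (so $p_\alpha \in Id_G(A)$ by minimality of $\alpha^*$) or $\alpha>\alpha^*$, which in turn forces $\beta<\beta^*$ (so $q_\beta \in Id_G(A)$). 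Since $Id_G(A)$ is a two-sided ideal, each such cross term $p_\alpha q_\beta$ lies in $Id_G(A)$, and subtraction yields $p_{\alpha^*} q_{\beta^*} \in Id_G(A)$. Replacing $p$ by $p_{\alpha^*}$ and $q$ by $q_{\beta^*}$ then gives the desired reduction: both are multihomogeneous, both are $e$-homogeneous nonidentities, and their product is a $G$-graded identity of $A$.

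There is no real obstacle here — the only point requiring a bit of care (and the reason this lemma is stated separately from the analogous step in the earlier lemma about $Z(U)_e$) is that $p$ and $q$ need not be defined on disjoint sets of variables, so the naive isolation of the lex-smallest term of $pq$ is no longer available; the minimality-plus-$T$-ideal argument above is exactly what patches this.
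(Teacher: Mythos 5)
Your argument is correct and is essentially the same as the paper's: the paper's proof is literally ``Same as above,'' pointing back to the lex-ordering argument in the first lemma of this subsection, and what you write is that argument spelled out carefully.

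One framing claim in your write-up is off, though. You say that because $p$ and $q$ may share variables ``we cannot simply extract the lex-smallest multihomogeneous pieces $p_1,q_1$ and claim $p_1q_1$ is isolated as a multihomogeneous component of $pq$,'' and you offer this as the reason the lemma is stated separately. In fact the naive isolation does work: the lexicographic order on multi-degree vectors is additive, so if $\alpha_1$ and $\beta_1$ are the lex-smallest multi-degrees occurring in $p$ and $q$ respectively, any pair $(\alpha,\beta)$ with $p_\alpha\neq 0$, $q_\beta\neq 0$ and $\alpha+\beta=\alpha_1+\beta_1$ satisfies $\alpha\geq\alpha_1$ and $\beta\geq\beta_1$, forcing $(\alpha,\beta)=(\alpha_1,\beta_1)$; shared variables do not disturb this. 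Moreover, the earlier lemma explicitly allows $p$ and $q$ to share variables too, so the separate statement here is just for modularity, not because of a new difficulty.

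That said, your refinement---taking $\alpha^*=\min S_p$, $\beta^*=\min S_q$ over \emph{nonidentity} pieces and killing cross-terms using that $Id_G(A)$ is a two-sided ideal---is a valid and slightly more robust variant. It is the right formulation if one regards $p,q$ as elements of $F\langle X_G\rangle$, where a priori the lex-smallest multihomogeneous piece could already be an identity. In the paper's setting $p$ and $q$ are elements of the relatively free algebra $U_G$, so their nonzero multihomogeneous pieces are automatically nonidentities and the two versions coincide; your version makes this implicit point explicit.
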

\begin{proof}
Same as above.
\end{proof}

Recall that $\{u_{e} \otimes e_{i,i} \otimes E\}_{i \in \{1,\ldots, [G:H]\}, E \in M_{r}(F)}$ is a spanning set of the $e$-homogeneous component of $A$. We refer to
$A_{e,i}=span_{F}\{u_{e} \otimes e_{i,i} \otimes E\}$ as the $i$th component of $A_{e}$, $i=1,\ldots,[G:H]$.

Claim: Evaluations of any nonidentity $e$-polynomial on the algebra $A$ yield nonzero values on \textit{every} component of $A_{e}$. More precisely, if we denote by $P_{i}$ the projection $A_{e} \rightarrow A_{e,i}$, then for any $i$ the evaluation set of $P_{i}p$ on $A$ is nonzero.

Let us complete the proof of the Theorem using the claim. We let $\mathfrak{A}_{p,i} = \{z \in A^{n}: P_{i}p(z) \neq 0\}$  and  $\mathfrak{B}_{q,i} = \{z \in A^{n}: P_{i}q(z) = 0\}$. As $\mathfrak{A}_{p,i}$ is nonempty and open, $\mathfrak{B}_{q,i} \neq A^{n}$ and closed, we have $\mathfrak{A}_{p,i} \nsubseteq \mathfrak{B}_{q,i}$. This shows $P_{i}pq=P_ip\cdot P_iq: A^{n} \rightarrow M_{r}(F)$ is a nonzero polynomial function and we are done.

We turn now to the proof of the claim. We'll see that basically the claim follows from the path property which is known to hold for $A$ by Proposition \ref{equivalence strongly and 1+2+3'}.

Suppose first $p$ is $e$-homogeneous and pure. We know that if $p$ does not vanish on a certain path, say the $i$-th path, it does not vanish on any path $j \in \{1,\ldots, [G:H]\}$. Furthermore, we saw that different paths yields values on different diagonal blocks and so the claim is clear in this case. Suppose now $p$ is multilinear and let $p = p_1 + \ldots + p_q$ be the decomposition of $p$ into its pure components. Let $Z$ be a monomial of $p_1$ say and suppose it does not vanish on a certain path evaluation. Let $Z_{\sigma}$ be a monomial in $p$ which obtained from $Z$ by the permutation $\sigma \in S_{n}$. If $Z_{\sigma}$ is a good permutation of $Z$ then they are constituents of the same pure polynomial, the case we considered above. Otherwise, applying the claim in the proof of Lemma \ref{decomposition of identities into Pure polynomials} we know $Z$ and $Z_{\sigma}$ get values on different blocks (this includes the case the evaluation annihilates $Z_{\sigma}$). This establishes the claim in case $p$ is multilinear.

Finally, we suppose $p$ is multihomogeneous and proceed by induction on its multihomogeneous degree.
Suppose $p(x_{1},x_{2}\ldots,x_{n})$ is multihomogeneous where variables $x_{1},\ldots,x_{n}$ appear in degree $k_1,\ldots,k_n$ respectively. Without loss of generality we may assume $k_1 \geq k_2 \geq \cdots \geq k_n$. Clearly we may view the tuple $(k_1,\ldots,k_n)$ as a partition of $m=\sum_{s}k_{s}$. Put $x_{1}=t_{1}+\cdots +t_{k_1}$ and let
$$\bar{p}(t_1,\ldots,t_{k_1},x_{2},\ldots,x_{n}) = p((t_1+\ldots + t_{k_1}),x_{2},\ldots,x_{n}).$$

Since $P_{i}p(x_1,x_2,\ldots,x_n)=0$, we have that $P_{i}\bar{p}(t_1,\ldots,t_{k},x_{2},\ldots,x_{n})=0$ and hence also $P_{i}\bar{p}(t_{j},x_{2},\ldots,x_{n})=0$, for $j=1,\ldots,k_1$. It follows that
$$P_{i}(\bar{p}(t_1,\ldots,t_{k_1},x_{2},\ldots,x_{n}) - \sum_{j=1}^{k_1}p(t_{j},x_{2},\ldots,x_{n})) =0$$
where $\bar{p}(t_1,\ldots,t_{k},x_{2},\ldots,x_{n}) - \sum_{j}p(t_{j},x_{2},\ldots,x_{n}))$ is a sum of multihomogeneous polynomials of degree $m$ whose multihomogeneous degrees are strictly smaller than $(k_1,\ldots,k_n)$. The proof is completed by induction.

Thus, we have proved the algebra $\mathcal{A}_{G}=(Z(U)_{e}-\{0\})^{-1}U_{G}$ is a finite dimensional $G$-graded division algebra over the field $K = Z(\mathcal{A}_{G})_{e}$. It remains to show the algebra $\mathcal{A}_{G}$ is a twisted form of $A$. But this is clear since by construction $Id_{G}(\mathcal{A}_{G})= Id_{G}(A) = \Gamma$ and finite dimensional $G$-graded simple algebras over an algebraically closed field are determined up to a $G$-graded isomorphism by the $T$-ideal of $G$-graded identities.

\end{proof}

We still owe the reader the proof of the implication $1 + 2 +3 \Rightarrow 1 + 2 + 3'$ in the general case, namely in case $A=F^{c}H\otimes M_{[G/H]}(F)\otimes M_{r}(F)$, $r \geq 1$.
Interestingly, we complete the proof of that implication ($2 \rightarrow 3$ below) applying Theorem \ref{main theorem2}. For convenience, we summarize our main results in the following Theorem.

\begin{thm}

Let $\Gamma \subset F\langle X_{G} \rangle$ be a $G$-graded $T$-ideal which contains a Capelli polynomial. The following conditions are equivalent:

\begin{enumerate}
\item
$\Gamma$ is strongly verbally prime.

\item

There exists a finite dimensional algebra $G$-simple algebra $A=F^{c}H\otimes M_{[G/H]}(F)\otimes M_{r}(F)$ which satisfies conditions $1 + 2 + 3$ of Theorem \ref{main theorem1} such that $Id_{G}(A) = \Gamma$.

\item

There exists a finite dimensional algebra $G$-simple algebra $A=F^{c}H\otimes M_{[G/H]}(F)\otimes M_{r}(F)$ which satisfies conditions $1 + 2 + 3'$ of Theorem \ref{main theorem1'} such that $Id_{G}(A) = \Gamma$.

\item

There exists a field $K \supseteq F$ and a finite dimensional $G$-graded division algebra $B$ over $K$ with $Id_{G}(B) = \Gamma$.

\end{enumerate}

Furthermore
\begin{enumerate}

\item
The $G$-simple algebra $A$ in $(2)$ and $(3)$ is uniquely determined.

\item
The finite dimensional $G$-graded division algebra $B$ $($in $3$$)$ is uniquely determined in the following sense: Any two such algebras are twisted forms of each other, that is, if $B'$ is a finite dimensional $G$-graded division algebra over $K'$ with $Id_{G}(B') = \Gamma$, then there exists a field extension $E$ of $K$ and $K'$ such that $B\otimes_{K}E \cong B'\otimes_{K'}E$ as $G$-graded algebras.

\end{enumerate}
\end{thm}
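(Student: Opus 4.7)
The plan is to assemble the previously established equivalences and add a short descent step for the uniqueness statement concerning $B$. The chain $(1) \Leftrightarrow (2) \Leftrightarrow (3)$ follows immediately: $(1) \Leftrightarrow (2)$ is the content of Theorem \ref{main theorem1}, while $(2) \Leftrightarrow (3)$ is Theorem \ref{main theorem1'}, whose only remaining direction $1{+}2{+}3 \Rightarrow 1{+}2{+}3'$ for arbitrary $r$ was obtained above by reducing to the $r=1$ case of Proposition \ref{necessity of path property r=1} via Theorem \ref{main theorem2} and then applying Proposition \ref{equivalence strongly and 1+2+3'}. For $(1) \Leftrightarrow (4)$, the direction $(1) \Rightarrow (4)$ is Theorem \ref{main theorem2}, where the central localization $B = (Z(U_G)_e \setminus \{0\})^{-1}U_G$ of the finitely generated relatively free algebra $U_G = F\langle X_G\rangle/\Gamma$ is shown to be a finite dimensional $G$-graded division algebra over $K = \mathrm{Frac}(Z(U_G)_e)$ with $Id_G(B) = \Gamma$. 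The converse $(4) \Rightarrow (1)$ is immediate: given $G$-homogeneous polynomials $f$ and $g$ on disjoint sets of variables with $f, g \notin \Gamma$, pick homogeneous evaluations $\tilde f, \tilde g \in B$; both are nonzero, hence invertible in the $G$-graded division algebra $B$, so $\tilde f \tilde g \neq 0$ and $fg \notin \Gamma$.

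Uniqueness of the $F$-model $A$ in conditions $(2)$ and $(3)$ follows from the Aljadeff--Haile theorem \cite{AljHaile} (extending Koshlukov--Zaicev to arbitrary finite groups): two finite dimensional $G$-simple $F$-algebras with the same $T$-ideal of $G$-graded identities are $G$-graded $F$-isomorphic. Applied with $Id_G(A) = Id_G(A') = \Gamma$, this gives the required $F$-isomorphism.

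For the uniqueness of $B$ up to twisted form I would argue by descent. Given two division algebras $B/K$ and $B'/K'$ with $Id_G(B) = Id_G(B') = \Gamma$, first observe that $B$ is a $G$-graded $K$-form of the $F$-model $A$ from $(2)$: base changing $B$ to an algebraic closure $\overline{K}$ of $K$ (which contains $F$), the finite dimensional $G$-graded $\overline{K}$-algebra $B \otimes_K \overline{K}$ shares all multilinear $G$-graded identities with $A \otimes_F \overline{K}$, since multilinear identities are preserved under base change, and both algebras are $G$-simple over the algebraically closed field $\overline{K}$; the $G$-simplicity of $B \otimes_K \overline{K}$ is the $G$-graded analogue of the standard fact that central simple algebras remain central simple under base change, and rests on the hypothesis $Z(B)_e = K$ built into the central-localization construction. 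By the Aljadeff--Haile classification applied over $\overline{K}$, we conclude $B \otimes_K \overline{K} \cong A \otimes_F \overline{K}$ as $G$-graded $\overline{K}$-algebras. Similarly $B' \otimes_{K'} \overline{K'} \cong A \otimes_F \overline{K'}$. Finally, embedding both $\overline{K}$ and $\overline{K'}$ into a common algebraically closed field $\tilde E$ over $F$ and base changing, we obtain
\[
B \otimes_K \tilde E \;\cong\; A \otimes_F \tilde E \;\cong\; B' \otimes_{K'} \tilde E
\]
as $G$-graded $\tilde E$-algebras, which is the required uniqueness statement. The hardest piece of the argument is precisely this identification of an arbitrary $G$-graded division algebra representative of $\Gamma$ with a twisted form of the canonical $F$-model $A$; the rest of the proof is essentially organizational, recombining theorems already proved in the paper.
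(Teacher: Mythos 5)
Your overall organization is sound, and the uniqueness argument for $B$ is a welcome elaboration of the paper's terse appeal to Aljadeff--Haile: base-changing $B$ (resp.\ $B'$) to an algebraic closure, matching the resulting $G$-simple algebra with the $F$-model $A$ via the classification, and then amalgamating into a common overfield $\tilde E$ is exactly the intended descent argument, and you correctly note that the hypothesis $Z(B)_e = K$ (built into the central-localization construction) is what keeps $B\otimes_K\overline K$ $G$-simple. The equivalences $(1)\Leftrightarrow(2)$, $(1)\Leftrightarrow(4)$, and the one-line proof of $(4)\Rightarrow(1)$ are all fine.

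The genuine gap is the implication $(2)\Rightarrow(3)$, i.e.\ $1{+}2{+}3 \Rightarrow 1{+}2{+}3'$ for arbitrary $r$. You say this ``was obtained above by reducing to the $r=1$ case of Proposition~\ref{necessity of path property r=1} via Theorem~\ref{main theorem2},'' but at this point in the paper only the $r=1$ case of that implication is available; the general case is exactly what still has to be proved, and the paper proves it \emph{inside} the proof of this theorem. The missing content is an explicit construction of a $G$-graded division algebra twisted form of $A=F^cH\otimes M_{[G/H]}(F)\otimes M_r(F)$ when $r>1$. The paper's argument is: write $A_0=F^cH\otimes M_{[G/H]}(F)$, which satisfies $1{+}2{+}3$ with $r=1$, hence (by the $r=1$ case and Proposition~\ref{equivalence strongly and 1+2+3'}) is strongly verbally prime and thus (by Theorem~\ref{main theorem2}) admits a $G$-graded division algebra form $B_0$ over some field $K_0$, with $e$-component $L_0=(B_0)_e$ a finite extension of $K_0$. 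One then extends scalars to the rational function field $K_0(t,s)$ and tensors with the degree-$r$ symbol algebra $(t,s)_r$ over $K_0(t,s)$; the key fact is that $((t,s)_r)_{L_0(t,s)}$ remains a division algebra, so $(B_0)_{K_0(t,s)}\otimes_{K_0(t,s)}(t,s)_r$ is a $G$-graded division algebra and a twisted form of $A$. Only once such a form is produced can one invoke the easy direction of Theorem~\ref{main theorem2} to get that $A$ is strongly verbally prime and then Proposition~\ref{equivalence strongly and 1+2+3'} to get the path property $3'$. Without the symbol-algebra construction, the chain $(2)\Rightarrow(3)$ is not closed, and this is precisely the substantial remaining piece of the theorem's proof.
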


\begin{proof}

Note that we have shown that $1 \rightarrow 2$ in the first part of section \ref{sec:strongly-verbally-prime} (Lemma \ref{lem:-same_block_size}, Proposition \ref{normal subgroup} and Lemma \ref{invariance of cohomology}). The equivalence $3 \leftrightarrow 1$ is stated and proved in Proposition \ref{equivalence strongly and 1+2+3'}. Above in this section, we showed $1 \leftrightarrow 4$ and in last part of section \ref{sec:strongly-verbally-prime} (Proposition \ref{necessity of path property r=1})  we showed $2 \rightarrow 3$ in case $r=1$. Let $A=F^{c}H\otimes M_{[G/H]}(F)\otimes M_{r}(F)$ and suppose it satisfies conditions $1 + 2 + 3$ of Theorem \ref{main theorem1}. Clearly, the proof will be completed if we show that $A$ admits a twisted form which is a $G$-graded division algebra. Consider the algebra $A_{0} = F^{c}H\otimes M_{[G/H]}(F)$. Clearly it satisfies $1 + 2 + 3$ of Theorem \ref{main theorem1} and since here $r=1$ it admits a finite dimensional $G$-graded twisted form $B_{0}$ over a field $K_{0}$ which is a $G$-division algebra. All we need to do now is to find a field $K$ and $G$-graded division algebra $B$ over some field $K$ which is a twisted form of $A$. Since $B_{0}$ is a $G$-graded division algebra over $K_{0}$ and since its $e$-component is commutative, we have that $L_{0} = (B_{0})_{e}$ is a finite field extension of $K_{0}$. Clearly, we may extend scalars to $K_{0}(t,s)$ (the rational function field over $K_{0}$ on ideterminates $t$ and $s$) and obtain a $G$-graded division algebra $(B_{0})_{K_{0}(t,s)}$ which is also a twisted form of $A_{0}$. It remains to prove there exists an ungraded division algebra $D$ of index $r$ over $K_{0}(t,s)$ which remains a division algebra after extending scalars to $L_{0}(t,s)$. Since $F$ is algebraically closed of characteristic zero, it contains a primitive $r$th root of unity $\mu_{r}$ and so we may consider the symbol algebra   $(t,s)_{r} = \langle u^{r}=t, v^{r}=s, vu = \mu_{r}uv \rangle$ over $K_{0}(t,s)$. It is easy to prove that $((t,s)_{r})_{L_{0}(t,s)}$ is a division algebra and we are done. Details are left to the reader.

Finally, for the uniqueness (second part of the theorem) we invoke once again the fact that any two $G$-graded simple algebras over an algebraically closed field $F$ of characteristic zero are $G$-graded isomorphic if and only if they satisfy the same $G$-graded identities (see \cite{AljHaile}).

\end{proof}

We close the paper with the proof of Theorem \ref{nondisjoin variables}.

\begin{proof}
Suppose $A$ is a finite dimensional $G$-graded division algebra over a field $K$ and let $p$ and $q$ be $G$-homogeneous polynomials such that $p$ and $q \notin Id_{G}(A)$. If $pq \in Id_{G}(A)$ we have that $\mathfrak{A}_{p} = \{z \in A^{n}: p(z) \neq 0\} \subseteq \mathfrak{B}_{q} = \{z \in A^{n}: q(z) = 0\}$. But this is impossible since $\mathfrak{A}_{p}$ is nonempty and open whereas $\mathfrak{B}_{p} \neq A^{n}$ and closed. For the opposite direction note that our condition on polynomials $p$ and $q$ implies $A$ is $G$-prime and hence by Theorem \ref{classification of G-graded prime} $A$ is $G$-graded PI equivalent to a finite dimensional $G$-simple algebra. On the other hand the condition on $p$ and $q$ implies $A$ is strongly $G$-graded verbally prime. Invoking Theorem \ref{main theorem1} and Theorem \ref{main theorem1'} the result follows.

\end{proof}

\end{document}